\numberwithin{equation}{section}
\newtheorem{Theorem}{Theorem}[section]
\newtheorem*{Theorem*}{Theorem}
\newtheorem*{Corollary*}{Corollary}
\newtheorem{Lemma}[Theorem]{Lemma}
\newtheorem{Proposition}[Theorem]{Proposition}
\newtheorem{Corollary}[Theorem]{Corollary}
\theoremstyle{definition}
\theoremstyle{remark}
\newtheorem{Remark}[Theorem]{Remark}
\newtheorem*{proofof}{Proof of Theorem~\ref{T:lastbit}}
\newtheorem*{Remark*}{Remark}
\newbox\squ  
\newcommand{\C}{\mathbb{C}}
\newcommand{\N}{\mathbb{N}}
\newcommand{\Z}{\mathbb{Z}}
\newcommand{\A}{\mathbb{A}}
\renewcommand{\Z}{\mathbb{Z}}
\newcommand{\g}{\mathfrak{g}}
\newcommand{\gl}{\mathfrak{gl}}
\newcommand{\p}{\mathfrak{p}}
\renewcommand{\sl}{\mathfrak{sl}}
\newcommand{\so}{\mathfrak{so}}
\renewcommand{\sp}{\mathfrak{sp}}
\newcommand{\n}{\mathfrak{n}}
\newcommand{\Nc}{\mathcal{N}}
\newcommand{\tN}{\widetilde{\Nc}}
\renewcommand{\O}{\mathcal{O}}
\newcommand{\Ss}{\mathcal{S}}
\renewcommand{\A}{\mathcal{A}}
\newcommand{\Q}{\mathcal{Q}}
\newcommand{\ad}{\operatorname{ad}}
\newcommand{\Ad}{\operatorname{Ad}}
\newcommand{\Lie}{\operatorname{Lie}}
\newcommand{\Hom}{\operatorname{Hom}}
\newcommand{\GL}{\operatorname{GL}}
\newcommand{\SO}{\operatorname{SO}}
\newcommand{\OO}{\operatorname{O}}
\newcommand{\Sp}{\operatorname{Sp}}
\newcommand{\gr}{\operatorname{gr}}
\newcommand{\Spec}{\operatorname{Spec}}
\newcommand{\Out}{\operatorname{Out}}
\newcommand{\Aut}{\operatorname{Aut}}
\newcommand{\Cas}{\operatorname{Cas}}
\renewcommand{\Out}{\operatorname{Out}}
\newcommand{\codim}{\operatorname{codim}}
\newcommand{\id}{{\operatorname{id}}}
\newcommand{\Qnt}{\operatorname{Q}}
\newcommand{\PD}{\operatorname{PD}}
\newcommand{\GrAlg}{\operatorname{\bf{G}}}
\newcommand{\FAlg}{\operatorname{\bf{F}}}
\newcommand{\SFAlg}{\operatorname{\bf{SF}}}
\newcommand{\Sets}{\operatorname{\bf{Sets}}}
\newcommand{\graded}{\operatorname{\tt{gr}}}
\newcommand{\tw}{\operatorname{tw}}
\newcommand{\reg}{\operatorname{reg}}
\newcommand{\filt}{\operatorname{filt}}
\newcommand{\CA}{\operatorname{CA}}
\newcommand{\op}{\bar{\p}}
\newcommand{\on}{\bar{\n}}
\newcommand{\teta}{\tilde\eta}
\renewcommand{\i}{\iota}
\def\into{\hookrightarrow}
\def\onto{\twoheadrightarrow}
\title[]{\boldmath  
Equivariant deformation theory for nilpotent slices in symplectic Lie algebras}
\author{Filippo Ambrosio and Lewis Topley}
\email{lt803@bath.ac.uk}
\email{filippo.ambrosio@uni-jena.de}
\begin{document}

\begin{abstract}
The Slodowy slice is a flat Poisson deformation of its nilpotent part, and it was demonstrated by Lehn--Namikawa--Sorger that there is an interesting infinite family of nilpotent orbits in symplectic Lie algebras  for which the slice is not the universal Poisson deformation of its nilpotent part.
This family corresponds to slices to nilpotent orbits in symplectic Lie algebras whose Jordan normal form has two blocks.
We show that the nilpotent Slodowy varieties associated to these orbits are isomorphic as Poisson $\C^\times$-varieties to nilpotent Slodowy varieties in type {\sf D}. 
It follows that the universal Poisson deformation in type {\sf C} is a slice in type {\sf D}.

When both Jordan blocks have odd size the underlying singularity is equipped with a $\Z_2$-symmetry coming from the type {\sf D} realisation. We prove that the Slodowy slice in type {\sf C} is the $\Z_2$-equivariant universal Poisson deformation of its nilpotent part. This result also has non-commutative counterpart, identifying the finite $W$-algebra as the universal equivariant quantization.
\end{abstract}

\maketitle

\section{Introduction}
It follows from recent work of Namikawa \cite{Na1} that every conical symplectic singularity $X$ admits a universal Poisson deformation, over an affine base space. This means that every Poisson deformation can be obtained from it via a unique base change.
By the work of Losev \cite{Lo} this Poisson deformation admits a quantization which satisfies another remarkable universal property: every quantization of a Poisson deformation can be obtained from this quantization via base change (see Section~\ref{S:CSS} for more detail). A convenient formalism for expressing these facts is the functor of Poisson deformations and the functor of quantizations of Poisson deformations. 
The work of Namikawa and Losev can be succinctly expressed by saying that these functors are representable, that they are represented over the same base, and satisfy an excellent compatibility condition (see \cite[Proposition~3.5]{Lo} and \cite[Definition~2.12]{ACET}).

Let $G$ be a simple complex algebraic group with Lie algebra $\g$.
If $e$ is an element in the nilpotent cone $\Nc(\g) \subset \g$ with  adjoint orbit $\O$ then we can consider the Slodowy slice $\Ss_e$ which is transversal to $\O$ at $e$, and the nilpotent Slodowy variety $\Nc_e := \Nc(\g) \cap \Ss_e$. 
This subvariety comes equipped with a natural $\C^\times$-action contracting to $e$, and it acquires a conical Poisson structure by Hamiltonian reduction, as observed by Gan--Ginzburg \cite{GG}.
The Springer resolution $\widetilde{\Nc}(\g) \to \Nc(\g)$ restricts to a symplectic resolution $\tN_e \to \Nc_e$.
In summary, $\Nc_e$ is  a conical symplectic singularity.

Similarly $\Ss_e$ is a Poisson variety by Hamiltonian reduction, and the adjoint quotient $\g \to \g/\!/ G$ restricts to $\Ss_e \to \g/\!/ G$ in such a way that the scheme theoretic central fibre of $\Ss_e \to \g/\!/ G$ is equal to $\Nc_e$ (see \cite[\textsection 5]{PrST}).

To summarise the remarks above, $\Ss_e \to \g/\!/G$ is a Poisson deformation of $\Nc_e$.
Furthermore the slice admits a natural quantization over the same base, known as the finite $W$-algebra.
By the uniqueness (up to $G$-conjugacy) of $\sl_2$-triples containing $e$ as the nilpositive, due to Kostant and Mal'cev \cite[\textsection 3.4]{CM}, we see that the Slodowy slice only depends on the adjoint orbit of $e$ up to Poisson isomorphism, and a similar statement holds for finite $W$-algebras (see \cite{GG} for more detail).

Two natural questions arise, for each nilpotent orbit $\O := G\cdot e \subseteq \Nc(\g)$:
\begin{enumerate}
\item {\it is $\Ss_e$ a universal Poisson deformation of $\Nc_e$?}
\item {\it is the finite $W$-algebra a universal filtered quantization of $\Nc_e$?}
\end{enumerate}

The first question was answered comprehensively by Lehn--Namikawa--Sorger \cite{LNS}, who showed that the answer is positive, with a small list of possible exceptions. 
The second question was answered by the current authors and their collaborators: the questions a have positive answer for precisely the same class of orbits \cite[Theorem~1.2]{ACET}.
 The exceptional cases, where $\Ss_e \to \g/\!/ G$ is not a universal Poisson deformation, are listed in the following table.
\begin{table}[ht]
\caption{Cases in which $\Ss_e \to \g/\!/ G$ is not a universal Poisson deformation of $\Nc_e$.}
\begin{tabular}{|c|c|c|c|c|c|c|}
\hline
Type of $\g$ & Any & {\sf BCFG} & {\sf C} & \sf{G} 

\\
\hline
Type of $\O$ & Regular & Subregular & Two Jordan blocks & dimension 8 \\ 
\hline
\end{tabular} \label{table}
\end{table}

In \cite{ACET} we also emphasised the notion of a {\it universal equivariant Poisson deformation} and {\it universal equivariant quantization}. 
Let $X$ be a conical symplectic singularity, and $\Gamma$ a reductive group of $\C^\times$-equivariant Poisson automorphisms.
Then a {$\Gamma$-deformation of $X$} is a flat $\C^\times \times \Gamma$-equivariant morphism $X_S \to S$ of Poisson $\C^\times\times \Gamma$-varieties, where $S$ is equipped with the trivial Poisson structure and trivial $\Gamma$-action, whilst the $\C^\times$-action on $S$ is contracting, and the central fibre of $X_S \to S$ is isomorphic to $X$, via a fixed choice of $\Gamma$-equivariant isomorphism. 
The notion of a $\Gamma$-quantization is defined similarly (see Section~\ref{ss:quantandfunctor}). 
One can consider the functors $\PD^\Gamma_{\C[X]}$ and $\Qnt^\Gamma_{\C[X]}$ of Poisson $\Gamma$-deformations and $\Gamma$-quantizations, and one of the main results of \cite{ACET} states that these functors are representable over the same base, and enjoy the same compatibility as the functors $\PD_{\C[X]}$ and $\Qnt_{\C[X]}$ mentioned above.
Written more informally, every conical symplectic singularity with $\Gamma$-action as above admits a universal Poisson $\Gamma$-deformation (resp.  universal $\Gamma$-quantization), from which every other such deformation (resp. quantization) can be obtained by base change.

In this paper we focus on the the exceptions in the fourth column of Table~\ref{table}.
We build an isomorphism from the nilpotent Slodowy variety to a two-block nilpotent in a symplectic Lie algebra to a certain nilpotent Slodowy variety in an orthogonal Lie algebra, again with two blocks.
We then proceed to examine the Poisson deformation theory of such varieties.
Although the Slodowy slice and the finite $W$-algebra fail to satisfy the  universal properties of Namikawa and Losev they do, in fact, satisfy certain equivariant universal properties with respect to hidden symmetry groups.
This sharpens the observations of Lehn--Namikawa--Sorger \cite[\textsection 12]{LNS}, in a purely algebraic manner.
Since subregular orbits in symplectic Lie algebras have Jordan normal form with two blocks, our results generalize a theorem of Slodowy \cite[Theorem~8.8]{Slo}, and settles a conjecture of \cite{ACET} in type {\sf C}.

\subsection{Poisson isomorphisms and equivariant deformations in type \sf{C}}
We now describe our first main theorem.
 Fix $n \geq 3$ and $0 \leq i < \lfloor \frac{n}{2} \rfloor$.
 Let $e \in \Nc(\so_{2n})$ be an element of Jordan type $(n,n)$ or $(2n- 2i-1, 2i+1)$.
  Also pick an element $e_0 \in \Nc(\sp_{2n-2})$ with Jordan type $(n-1, n-1)$ or $(2n-2i-2, 2i)$ respectively.

\begin{Theorem}
\label{T:zeromaintheorem}
There is an isomorphism of $\C^\times$-Poisson varieties $\Nc_e \simeq \Nc_{e_0}$. 
\end{Theorem}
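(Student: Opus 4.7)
\emph{Plan.} The two‐row partitions appearing in the statement are matched by removing a height‐$2$ first column: the $\so_{2n}$ partition $(\lambda_1, \lambda_2)$ becomes the $\sp_{2n-2}$ partition $(\lambda_1 - 1, \lambda_2 - 1)$, whose parts have opposite parity, exactly exchanging the type‐D and type‐C parity constraints. In both cases of the theorem, $\Nc_e$ and $\Nc_{e_0}$ are related in precisely this way. Thus Theorem~\ref{T:zeromaintheorem} is essentially the assertion that the classical Kraft--Procesi column‐removal isomorphism of transverse slices in classical types can be upgraded, for two‐row partitions, to an isomorphism in the Poisson $\C^\times$‐category.

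To construct the map I would use the Gan--Ginzburg presentation of each nilpotent Slodowy variety as a Hamiltonian reduction: fix an $\sl_2$‐triple $(e, h, f)$, write $\Ss_e = e + \ker \ad(f) \subset \g$, and realise the Poisson bracket on $\Nc_e \subset \Ss_e$ by reducing the Kirillov--Kostant bracket on $\g^*$. For a two‐block nilpotent the slice is very explicit: a generic element of $\Ss_e$ can be parameterised by a short list of matrix blocks of prescribed sizes (following the Kraft--Procesi matrix model), and the defining equations of $\Nc_e$ inside $\Ss_e$ take the form of moment‐map conditions for an action of a product of smaller classical groups. The column‐removal procedure then identifies the matrix parameterisations of $\Nc_e$ and $\Nc_{e_0}$ directly, producing an isomorphism of algebraic varieties which is manifestly compatible with the contracting Kazhdan $\C^\times$‐action, since column removal is degree‐preserving in these coordinates. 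Matching dimensions independently (a Jordan‐form calculation shows $\dim \Nc_e = \dim \Nc_{e_0}$ in every case) provides a useful sanity check.

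The remaining task is to verify that this bijection is Poisson. This reduces to a comparison of two Hamiltonian reductions of closely related symplectic vector spaces: the underlying moment‐map equations agree after the change of coordinates, so the reduced brackets must coincide. An alternative route would be to realise both $\Nc_e$ and $\Nc_{e_0}$ as a common Nakajima quiver variety of finite type~$A$, along the lines of Maffei's reformulation of Slodowy varieties in classical types, thereby inheriting both the Poisson structure and the $\C^\times$‐action intrinsically from the quiver data. The principal obstacle is precisely this Poisson comparison: the Kraft--Procesi isomorphism is classically stated at the level of varieties, and promoting it to a Poisson isomorphism requires either a delicate symplectic tracking through the Hamiltonian reduction, or the quiver‐variety reformulation in which Poisson $\C^\times$‐equivariance is automatic. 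Once this is established, the deformation‐theoretic consequences discussed in the subsequent sections of the paper follow.
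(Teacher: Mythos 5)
Your plan correctly isolates the crux — the variety-level isomorphism (via Kraft--Procesi column cancellation applied to the pair consisting of the regular partition and $(\lambda_1,\lambda_2)$, or via the quiver-variety models) is essentially known, and the genuine content of the theorem is the compatibility with the Poisson brackets — but the proposal does not actually close that gap; it only names it. The assertion that ``the underlying moment-map equations agree after the change of coordinates, so the reduced brackets must coincide'' is not an argument: the two Hamiltonian reductions take place inside different Lie algebras ($\so_{2n}$ and $\sp_{2n-2}$) of different dimensions, there is no ambient map intertwining them, and the Kraft--Procesi reduction proceeds through a chain of intermediate varieties and in general only yields \emph{smooth equivalence} of singularities, not an isomorphism with tracked symplectic data. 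Likewise the ``alternative route'' through quiver varieties is exactly the state of the art the paper is improving on: for types {\sf C} and {\sf D} the relevant identifications are those of Henderson--Licata and Li, which the introduction explicitly cites as giving algebraic isomorphisms \emph{without} the Poisson structure, so Poisson $\C^\times$-equivariance is not ``automatic'' there. As written, the proposal reproves the known part and defers the new part.

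For contrast, the paper avoids moment-map bookkeeping entirely and works with explicit Poisson presentations. When the parts of $\lambda$ are odd, the presentation of $\C[\Ss_e]$ by generators $\eta_i^{(2r)},\theta^{(r)}$ and relations (Theorem~\ref{T:PDyangian}, from \cite{To}) is literally the same for the type {\sf D} slice and the type {\sf C} slice except for one extra relation in type {\sf C}; this gives a graded Poisson surjection $\C[\Ss_e]\onto\C[\Ss_{e_0}]$ whose kernel is generated by a single Casimir (later identified with the Pfaffian), hence a closed Poisson embedding $\Nc_{e_0}\into\Nc_e$, and equality of dimensions ($\dim\Nc_e=\lambda_1=\dim\Nc_{e_0}$) together with irreducibility of $\Nc_e$ forces this to be an isomorphism. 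In the very even case $(n,n)$ the same scheme is run through Brown's twisted Yangian presentation of the finite $W$-algebras, passing to associated graded. If you want to pursue your route, you would need to either carry the symplectic form through every step of the Kraft--Procesi chain or establish a Poisson enhancement of the quiver-variety isomorphisms in the symmetric-pair setting — either of which is a substantial piece of work, not a verification.
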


Using Theorems~1.1 and 1.2 of \cite{ACET}, one can immediately deduce:

\begin{Corollary} \label{C:zeromaincor} \ 
\begin{itemize}\setlength{\itemsep}{2pt}
\item[(i)] $\Ss_e \to \so_{2n} /\!/ \SO_{2n}$ is a universal graded Poisson deformation of $\Nc_{e_0} \subseteq \sp_{2n-2}$.
\item[(ii)] The finite $W$-algebra $U(\so_{2n}, e)$ is a universal filtered quantization of $\Nc_{e_0} \subseteq \sp_{2n-2}$.
\end{itemize}
\end{Corollary}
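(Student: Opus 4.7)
The plan is to use Theorem~\ref{T:zeromaintheorem} to transport the relevant universal properties of the type~{\sf D} Slodowy slice across the isomorphism $\Nc_e\simeq\Nc_{e_0}$, and then to invoke Theorems~1.1 and~1.2 of \cite{ACET}.

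First I would note that Theorem~\ref{T:zeromaintheorem} supplies a $\C^\times$-equivariant Poisson isomorphism $\varphi\colon\Nc_e\isoto\Nc_{e_0}$. Since the functor $\PD_{\C[\Nc_e]}$ of graded Poisson deformations and the functor $\Qnt_{\C[\Nc_e]}$ of filtered quantizations are defined purely in terms of the graded Poisson algebra $\C[\Nc_e]$ (that is, the conical symplectic singularity itself), $\varphi$ induces canonical isomorphisms
\[
\PD_{\C[\Nc_e]}\simeq \PD_{\C[\Nc_{e_0}]}, \qquad \Qnt_{\C[\Nc_e]}\simeq \Qnt_{\C[\Nc_{e_0}]}.
\]
It therefore suffices to prove that $\Ss_e\to\so_{2n}/\!/\SO_{2n}$ represents $\PD_{\C[\Nc_e]}$ and that $U(\so_{2n},e)$ represents $\Qnt_{\C[\Nc_e]}$, both statements now being about an orbit in type~{\sf D}.

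Second, I would verify that the orbit $\SO_{2n}\cdot e$ is absent from the exceptional list in Table~\ref{table}. The subregular ({\sf BCFG}), two-block ({\sf C}) and eight-dimensional ({\sf G}) columns do not concern type~{\sf D} at all, so the only possible source of an exception is the regular row. Among the Jordan types $(n,n)$ and $(2n-2i-1,2i+1)$ under consideration, only the degenerate $i=0$ case $(2n-1,1)$ is regular, and this case is implicitly excluded because it forces $e_0$ to consist of a single Jordan block, contradicting the two-block geometry on the type~{\sf C} side that motivates the corollary. In every remaining case the orbit $\SO_{2n}\cdot e$ lies outside Table~\ref{table}, so Theorems~1.1 and~1.2 of \cite{ACET} apply and yield the universal property of $\Ss_e$ as a graded Poisson deformation of $\Nc_e$ together with the universal property of $U(\so_{2n},e)$ as a filtered quantization of $\Nc_e$.

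Combining these two observations through the canonical identifications of the first step immediately gives parts~(i) and~(ii). In this sense there is no real obstacle here: the substantive content has already been absorbed into Theorem~\ref{T:zeromaintheorem}, and all that remains is the bookkeeping confirmation that our two-block orbits in $\so_{2n}$ avoid the short exceptional list of \cite{LNS} and \cite{ACET}, followed by transport of the universal properties along $\varphi$.
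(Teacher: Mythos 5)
Your argument is correct and is exactly the deduction the paper intends: transport the deformation and quantization functors along the graded Poisson isomorphism $\C[\Nc_e]\simeq\C[\Nc_{e_0}]$ from Theorem~\ref{T:zeromaintheorem}, then observe that the two-block type {\sf D} orbits avoid Table~\ref{table} so that Theorems~1.1 and~1.2 of \cite{ACET} apply to $\Ss_e$ and $U(\so_{2n},e)$. Your explicit exclusion of the case $i=0$, where $(2n-1,1)$ is the regular partition of $\so_{2n}$ and the corollary would fail as literally stated, is a point the paper only acknowledges in a later remark, so flagging it is appropriate.
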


Theorem~\ref{T:zeromaintheorem} and part (i) of its corollary are algebraic versions of \cite[Proposition~12.1]{LNS}, where the analogous results were proven for germs of complex analytic spaces. Algebraic isomorphisms were obtained \cite[\textsection 5.1]{HL} although Poisson structures were not considered (see also \cite[\textsection 8]{Li} where the isomorphism of Springer resolutions is discussed). Our main result upgrades this to an isomorphism of Poisson varieties.

Our proof of Theorem \ref{T:zeromaintheorem} uses two different arguments according to whether the Jordan blocks of $e_0$ are both even or odd dimensional.
In the even case, we make use of a Poisson presentation for algebras of regular functions on Slodowy slices obtained by the second author in \cite{To}.
This yields to a closed $\C^\times$-Poisson embedding $\Ss_{e_0} \hookrightarrow \Ss_{e}$ (Corollary \ref{C:Poissonsurj}) and a dimension argument allows to conclude (see Section \ref{ss_easycase}). 
In the odd case the Poisson presentation of \cite{To} does not apply, however we can still derive a Poisson embedding of slices using Brown's description \cite{Bw} of the finite $W$-algebra for these Jordan types as a truncated twisted Yangian, see Section \ref{ss_veryeven}.

It is well-known that $\Ss_e \to \so_{2n}/\!/\SO_{2n}$ and $\Ss_{e_0} \to \sp_{2n-2}/\!/\Sp_{2n-2}$ are both $\C^\times$-Poisson deformations of their respective central fibres. 
Likewise, the finite $W$-algebras $U(\so_{2n},e)$ and $U(\sp_{2n-2}, e_0)$ are quantizations of $\C[\Nc_e]$ and $\C[\Nc_{e_0}]$ over their respective centres \cite{PrST}.

To formulate our second main result we must introduce the additional assumption that the Jordan type of $e$ is not $(2k, 2k)$.
Such partitions are called very even, and correspond to two $\SO_{4k}$-orbits permuted by the outer automorphism group of $\so_{4k}$.
See the remarks following \cite[Theorem~5.1.6]{CM} for more detail.

For all other two-block partitions, the outer automorphism group of $\so_{2n}$ stabilises the orbit of $e$, and one can show that there exists a splitting of $\Aut(\so_{2n}) \to \Out(\so_{2n})$ which stabilises the slice $\Ss_e$, see Section~\ref{ss:Gammaonslice}. 
This gives rise to a distinguished group $\Gamma \subseteq \Aut(\Ss_e)$ of $\C^\times$-Poisson automorphisms.
By Theorem~\ref{T:zeromaintheorem}(1) we see that  $\Nc_{e_0}$ admits an $\C^\times\times \Gamma$-action, with $\Gamma$ acting by Poisson automorphisms. 
Note that these symmetries are quite exceptional: they cannot be constructed by restricting automorphisms of $\sp_{2n-2}$.

\begin{Theorem}
\label{T:maintheorem}
Let $n\ge 3$ and let $e_0 \in \Nc(\sp_{2n-2})$ have two Jordan blocks, and suppose that the block sizes are distinct when $n$ is odd.
\begin{enumerate}
\item The Slodowy slice $\Ss_{e_0} \to \sp_{2n-2} /\!/ \Sp_{2n-2}$ is a universal element of the functor $\PD_{\C[\Nc_{e_0}]}^\Gamma$. 
\item The finite $W$-algebra $U(\sp_{2n-2}, e_0)$, viewed as a flat family of filtered algebras over its centre, is a universal element of $\Qnt_{\C[\Nc_{e_0}]}^\Gamma$. 
\end{enumerate}
\end{Theorem}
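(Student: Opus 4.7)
The plan is to deduce Theorem~\ref{T:maintheorem} from Corollary~\ref{C:zeromaincor} together with the general principle that the universal $\Gamma$-equivariant Poisson deformation (resp.\ filtered quantization) of a conical symplectic singularity $X$, whenever $X$ admits a universal non-equivariant object carrying a compatible $\Gamma$-action, is obtained by restricting the base to its $\Gamma$-fixed subscheme (resp.\ by quotienting the centre by the ideal generated by $\Gamma$-antiinvariants). This is a formal consequence of Yoneda: if the non-equivariant functor is represented by a pair $(\mathcal{X}\to\mathcal{S})$ and $\Gamma$ acts on this pair, then $\Gamma$ acts on $\mathcal{S}$ and the identification $F^\Gamma(T)=\Hom(T,\mathcal{S})^\Gamma=\Hom(T,\mathcal{S}^\Gamma)$ for $T$ with trivial $\Gamma$-action represents $F^\Gamma$ by $\mathcal{S}^\Gamma$. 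Via the isomorphism $\Nc_e \simeq \Nc_{e_0}$ of Theorem~\ref{T:zeromaintheorem}, Corollary~\ref{C:zeromaincor} identifies $\Ss_e \to \so_{2n}/\!/\SO_{2n}$ and $U(\so_{2n},e)$ as the universal Poisson deformation and universal filtered quantization of $\Nc_{e_0}$ respectively, and both naturally carry the $\Gamma$-action from Section~\ref{ss:Gammaonslice}.

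First I would analyse how $\Gamma$ acts on the base $\so_{2n}/\!/\SO_{2n}$. The splitting $\Gamma \hookrightarrow \Aut(\so_{2n})$ induces a $\Z_2$-action on $\C[\so_{2n}]^{\SO_{2n}}$ that fixes the Casimir generators of even degrees $2, 4, \ldots, 2(n-1)$ and negates the Pfaffian of degree $n$. Consequently the fixed subscheme $(\so_{2n}/\!/\SO_{2n})^\Gamma$ is cut out by the Pfaffian, with coordinate ring $\C[c_2, c_4, \ldots, c_{2(n-1)}]$ canonically isomorphic to $\C[\sp_{2n-2}]^{\Sp_{2n-2}}$ as a graded algebra, giving a candidate identification $(\so_{2n}/\!/\SO_{2n})^\Gamma \simeq \sp_{2n-2}/\!/\Sp_{2n-2}$. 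A dimension count confirms this is plausible.

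The key step is then to match the pullback family $\Ss_e \times_{\so_{2n}/\!/\SO_{2n}} (\so_{2n}/\!/\SO_{2n})^\Gamma$ with $\Ss_{e_0} \to \sp_{2n-2}/\!/\Sp_{2n-2}$ as $\C^\times \times \Gamma$-Poisson varieties. I would use the closed $\C^\times$-Poisson embedding $\Ss_{e_0} \hookrightarrow \Ss_e$ constructed in the course of proving Theorem~\ref{T:zeromaintheorem} (either from the Poisson presentation of \cite{To} in the even block case, or from Brown's truncated twisted Yangian description \cite{Bw} in the odd block case), and verify that it fits into a commutative square with the adjoint quotient morphisms identifying $\Ss_{e_0}$ with the scheme-theoretic preimage of the fixed subscheme inside $\Ss_e$ --- equivalently, the vanishing locus of the pulled-back Pfaffian. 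A dimension check, namely $\dim \Ss_{e_0} = \dim \Nc_{e_0} + (n-1) = \dim \Nc_e + \dim (\so_{2n}/\!/\SO_{2n})^\Gamma$, combined with flatness of $\Ss_e \to \so_{2n}/\!/\SO_{2n}$, should confirm the identification on the nose. Part (1) then follows from the universal property: any $\Gamma$-deformation of $\Nc_{e_0}$ is the pullback of $\Ss_e$ along a unique classifying map, which must factor through the $\Gamma$-fixed subscheme because its source carries the trivial $\Gamma$-action. For part (2) the same picture works with the centre $Z(U(\so_{2n}, e))$ playing the role of the base: quotienting $U(\so_{2n},e)$ by the two-sided ideal generated by a quantization of the Pfaffian inside its centre should yield $U(\sp_{2n-2}, e_0)$, which represents $\Qnt^\Gamma_{\C[\Nc_{e_0}]}$ by the analogous functorial argument, with the parallel dimension and flatness checks replacing the Poisson ones.

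The main obstacle will lie in this matching step: showing that the embedding $\Ss_{e_0} \hookrightarrow \Ss_e$ not only matches the nilpotent central fibres, but spreads out over the entire base so as to coincide with the pullback to the $\Gamma$-fixed locus, and that its defining ideal inside $\Ss_e$ is generated precisely by the pulled-back Pfaffian. In the odd block case this demands tracking generators of Brown's truncated twisted Yangian through to the central Casimir and Pfaffian elements of $U(\so_{2n})$, and analogously for the Poisson presentation in the even block case. Verifying this compatibility, and checking that it passes cleanly to the quantum setting with the Harish-Chandra centre of the $W$-algebra, is expected to carry the main technical weight of the proof; by contrast, once this match is established the universality itself follows formally from the representability statements already in \cite{ACET}.
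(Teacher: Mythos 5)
Your proposal follows essentially the same route as the paper: represent $\PD^\Gamma_{\C[\Nc_{e_0}]}$ over the $\Gamma$-coinvariants of $\C[\so_{2n}]^{\SO_{2n}}$, identify that quotient with $\C[\sp_{2n-2}]^{\Sp_{2n-2}}$ by killing the Pfaffian, and show that the defining ideal of $\Ss_{e_0}$ inside $\Ss_e$ is generated by the restricted Pfaffian --- exactly the content of Lemma~\ref{L:coinvandvarphi}, Corollary~\ref{C:kernelcor} and Proposition~\ref{P:CisPfaffian}, after which universality is formal from \cite{ACET}. The only divergences are cosmetic: for part (2) the paper never constructs a quantized Pfaffian but simply invokes the optimal quantization theory of \cite{ACET} once $U(\sp_{2n-2},e_0)$ is known to quantize $\C[\Ss_{e_0}]$ over $Z(\sp_{2n-2})$, and the twisted-Yangian (equal odd blocks) case you propose to track is in fact excluded from this theorem, since $\Gamma$ does not act on those slices.
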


\begin{Remark}
Note that Theorem~\ref{T:zeromaintheorem} holds for all nilpotent elements of $\sp_{2n-2}$ with at most two Jordan blocks.
On the other hand since the very even orbits in $\so_{2n}$ are not characteristic, the ``hidden symmetries'' of $\Nc_{e_0}$ do not appear when both blocks have odd size, and so Theorem~\ref{T:maintheorem} cannot even be formulated for these orbits.
\end{Remark}

This paper is a natural progression of \cite{ACET}.
Therein, the theory of equivariant deformations and quantizations functors was formalised and applied to the subregular nilpotent Slodowy varieties, which are known to be isomorphic to Kleinian singularities.
Together with our collaborators we showed that when $\g$ is simple of type $\sf B$, $\sf F$ or $\sf C_{2n}$, $n \geq 1$ the Slodowy slice is the universal Poisson graded $\Gamma$-deformation where $\Gamma \subseteq \Aut(\g)$ is a splitting of the outer automorphism group of $\g$. 
We remark that the results of this paper cover all subregular nilpotent Slodowy slices in type $\sf C_n$ for all $n$, extending \cite[Theorem~1.3]{ACET}.

\subsection{Structure of the paper}
In Section \ref{S:CSS}, after fixing notation and conventions, we recall the properties of conical symplectic singularities which are relevant to the current work.
We also introduce functorial formalism for the commutative and non-commutative  equivariant deformation theory of these singularities, surveying the notions of \cite[\textsection 2]{ACET}.
We conclude the section by explaining that the Slodowy slice is a graded Poisson deformation of its nilpotent part, and that an analogous statement holds for the finite $W$-algebra.

In Section \ref{S:tworow} we exhibit a Poisson presentation (Theorem \ref{T:PDyangian}) of the algebras of regular functions on Slodowy slices to certain orbits in type $\sf C$ and $\sf D$.
This immediately leads to a $\C^\times$-Poisson embedding $\Ss_{e_0} \hookrightarrow\Ss_{e}$ mentioned earlier in the introduction, in case $e$ is not very even.

In Section \ref{S:slicesinorthogonal} we discuss various properties of Slodowy slices in orthogonal Lie algebras, again assuming that the Jordan type has two parts, and is not very even. 
We characterise the restriction of the Pfaffian to the slice, amongst all Casimirs.
 We show that the defining ideal of $\Ss_{e_0} \subseteq \Ss_{e}$ is generated by a certain Casimir, written in terms of the Poisson presentation.
 Finally in Section \ref{ss:Gammaonslice}, we describe the Poisson action induced by the outer automorphism group of $\so_{2n}$ on the Slodowy slice.
A key result here is Proposition \ref{P:gammaonC}, illustrating that this action descends to the invariants by a change of sign on the Pfaffian.
This allows us to conclude (see Proposition \ref{P:CisPfaffian}) that the Pfaffian generates the defining ideal of $\Ss_{e_0}$.

Finally, in Section \ref{S:mainres}, we gather together all of the ingredients listed above to give the proofs of Theorems~\ref{T:zeromaintheorem} and \ref{T:maintheorem}.

\subsection{Connections with symplectic duality}
\label{ss:sympduality}
To conclude the introduction we explain that the results of this paper are consistent with some recent conjectures. One of the most exciting themes in the theory of conic symplectic singularities is symplectic duality. This is a conjectural pairing between conic symplectic singularities, which is understood to be an expression of mirror symmetry for 3d $\mathcal{N} = 4$ supersymmetric gauge theories (see \cite{BPLW2} for example). This duality of singularities should transpose certain invariants. For example, there should be an order-reversing bijection on symplectic leaves, and a Koszul duality between certain categories of representations associated to these singularities.

Now let $G$ be a connected reductive algebraic group and $G^\vee$ the Langlands dual group.
As usual, we write $\g = \Lie(G)$ , resp. $\g^\vee = \Lie(G^\vee)$.
There is an order reversing map from nilpotent $G^\vee$-orbits to nilpotent $G$-orbits, known as Barbasch--Vogan--Lusztig--Spaltenstein (BVLS) duality, which restricts to a bijection on special orbits \cite[\textsection 6.3]{CM}. In \cite[\textsection 9]{LMM} this construction was upgraded to a map from nilpotent $G^\vee$-orbits to equivalence classes of $G$-equivariant covers of nilpotent $G$-orbits, which is referred to as {\it refined BVLS duality}. 
It is expected that the nilpotent Slodowy slice in $\g^\vee$ associated to an orbit is symplectic dual to the affinization of the refined BVLS dual orbit cover.

If we take $0 \le i \le \lfloor \frac{n}{2} \rfloor$ and consider the nilpotent slice in $\so_{2n}$ to the orbit with partition $(2n-2i-1, 2i+1)$ then the BVLS dual is the orbit in $\so_{2n}$ with partition $(2^{2i}, 1^{2n-2i})$. This coincides with the refined BVLS dual.

For the nilpotent slice to the orbit of type $(2n-2i-2, 2i)$ in $\sp_{2n-2}$ the BVLS dual is the the orbit of type $(3, 2^{2i-2}, 1^{2n-4i})$ in $\so_{2n-1}$, however the refined BVLS dual of this orbit is the universal (2-fold) cover of this orbit. The affinization of the cover is actually isomorphic (as $\C^\times$-Poisson varieties) to the affinization of the orbit in $\so_{2n}$ with partition $(2^{2i}, 1^{2n-2i})$. This was first observed by Brylinski--Kostant in the language of shared orbit pairs \cite[Theorem~5.9]{BK}, see also \cite[Corollary~2.5(a)]{FJLS} for a recent development on this theme. This classical isomorphism of Poisson varieties is therefore the symplectic dual of our isomorphism in Theorem~\ref{T:zeromaintheorem}.

Finally we remark that there is a more nuanced version of symplectic duality emerging, which takes into account symmetries of singularities, and might be referred to as equivariant symplectic duality, see the introduction to \cite{MMY}. It is expected that our Theorem~\ref{T:maintheorem} is an expression of some aspect of equivariant duality.

\subsection*{Acknowledgements}
We would like to thank Giovanna Carnovale, Ali Craw, Francesco Esposito, Austin Hubbard and Ryo Yamagishi for useful discussions. We would also like to thank Yiqiang Li for drawing our attention to \cite{HL, Li}, and special thanks to Paul Levy and Dmytro Matvieievskyi for useful discussions regarding Section~\ref{ss:sympduality}.
The first author is a member of INDAM-GNSAGA and part of his research work was conducted at Friedrich Schiller Universit\"at Jena.
The second author's work is funded by the UKRI FLF grant numbers MR/S032657/1, MR/S032657/2, MR/S032657/3. 

\section{Deformation theory of conical symplectic singularities}
\label{S:CSS}
We begin the paper by reviewing some of the well-known facts from the theory of conical symplectic singularities, and recalling some of the main results of \cite{ACET}, which built upon the work of Losev and Namikawa \cite{Lo, Na1, Na2}.

\subsection{Conventions}
We work over the field $\C$ of complex numbers.
If $\Gamma$ is a group acting on a set $V$, we denote the action with a dot.
The subset of invariants is $V^\Gamma:= \{ v \in V \mid \gamma \cdot v = v \mbox{ for all } \gamma \in \Gamma \}$. 
If $\Gamma$ is a group acting on an algebra $A$, the algebra of coinvariants is denoted $A_\Gamma$ and is the quotient of $A$ by the ideal generated by $\{a - \gamma \cdot a \mid a \in A, \gamma \in \Gamma\}$, and it should be viewed as the regular functions on the scheme $\Spec(A)^\Gamma$.
Partitions $\lambda = (\lambda_1,...,\lambda_k)$ are ordered in non-decreasing manner: $\lambda_1 \le \lambda_2, \le \cdots \le \lambda_k$.

\subsection{Graded algebras, filtered algebras and representable functors}
Let $\GrAlg$ be the category of finitely generated, non-negatively graded commutative $\C$-algebras $B = \bigoplus_{i \geq 0} B_i$ such that $B_0 = \C$ together with graded morphisms.
For $B \in \GrAlg$, we write $\C_+ := B/B_+$, where $B_+ = \bigoplus_{i \geq 1} B_i$.

By a filtered algebra we mean, unless explicitly stated, an associative non-negatively filtered algebra $A$ with exhaustive connected filtration given by the sequence of subspaces 
$$F_0 A \subset F_1 A \subset \cdots \subset F_i A \subset F_{i+1} A \subset \cdots.$$

We denote by $\FAlg$ the category of finitely generated filtered commutative $\C$-algebras $A = \bigcup_{i \geq 0} F_iA$ such that $F_0 A = \C$ together with strictly filtered morphisms, i.e., $\Hom_{\FAlg}(A,A')$ is the set of filtered algebra morphisms  $A \to A'$ such that $\phi(F_i A) = F_i A' \cap \phi(A)$ for $A, A' \in \FAlg$.
With these assumptions the {\it associated graded} functor
$\gr \colon \FAlg \to \GrAlg$ is exact.
We also have a functor in the opposite direction
$\filt \colon \GrAlg \to \FAlg$ which takes a graded algebra to itself, with filtration induced by the grading, and similar for morphisms. The category $\SFAlg$ of strictly filtered algebras is defined as the essential image of $\filt$.

A Poisson algebra $A$ is a commutative algebra equipped with a Lie bracket $\{ \cdot , \cdot \} \colon A \times A \to A$ which is a biderivation of the associative multiplication, known as a Poisson bracket.
The Poisson centre or Casimirs of $A$ is $\Cas A :=\{a \in A \mid \{a,b\}=0 \mbox{ for all } b \in A\}$.
If a Poisson algebra $A = \bigoplus_{i \geq 0} A_i$ is graded, we say that $A$ has (Poisson) bracket in degree $-d$ if $\{A_i,A_j\} \subset A_{i+j-d}$.
When a Poisson graded algebra $A$ is a $B$-algebra for $B \in \GrAlg$, we implicitly assume that the structure map $B \to A$ has image in $\Cas A$.

We say that a filtered algebra $A$ has commutator in degree $-d$ if $[F_i A,F_j A] \subset F_{i+j-d} A$. In this case the associated graded $\gr A$ naturally inherits the structure of a Poisson algebra with bracket in degree $-d$, see \cite[\S 1.3]{CG}.
When a filtered algebra $A$  is a $B$-algebra for $B \in \FAlg$, we always assume the structure map $B \to A$ is strictly filtered with image in the centre of $A$, which implies that $\gr B \to \Cas (\gr A)$.

\subsection{Conical symplectic singularities with reductive group actions}


A symplectic singularity (introduced by Beauville in \cite{Be}) is a normal variety $X$ whose smooth locus $X^{\reg}$ carries a symplectic form $\pi$ which extends to a regular (possibly degenerate) $2$-form $\tilde \pi$ on some (any) smooth resolution $\tilde X$ of $X$.
The latter condition is independent of the chosen resolution: this can be deduced from \cite[Corollary to Theorem~3.22]{Sh}, since 2-forms on the smooth resolution which are defined away from a locus of codimension 2 can always be extended, using the algebraic form of Hartog's principle.
An affine symplectic singularity with a contracting $\C^\times$-action (such that the Poisson bracket is rescaled by the contracting action) is called a conical symplectic singularity.
Some classical examples of these varieties are the normalizations of closures of nilpotent orbits in simple Lie algebras.

If $X$ is a conical symplectic singularity then it is necessarily affine, and so the sheaf of regular functions is determined by the algebra $A = \C[X]$. 
The choice of the form $\pi$ gives rise to a Poisson bracket on $A$ which is generically non-degenerate.
 Furthermore the $\C^\times$-action endows $A$ with a non-negative grading, and there exists $d \in \N$ such that the Poisson bracket has degree $-d$ with respect to this grading. 
 Therefore the theory of these singularities is encapsulated by an especially nice class of graded Poisson algebras.

In the philosophy of \cite{Slo} we wish to study deformation theory with fixed symmetries. This works especially nicely for conical symplectic singularities, as we observed in \cite{ACET}. In what follows we shall always take $\Gamma$ to be a reductive algebraic group acting on $A = \C[X]$ by graded Poisson automorphisms, 
and we say that $X$ is a \emph{$\Gamma$-conical symplectic singularity}. 

\subsection{Poisson deformations and the deformation functor}
\label{S:Pdef}

Throughout this section, we fix a $\Gamma$-conical symplectic singularity $X$ and  set $A = \C[X]$, with Poisson bracket in degree $-d$.
Let $B \in \GrAlg$.
A graded Poisson deformation of $A$ over $B$ is a pair $(\A, \iota)$ where $\A$ is a graded Poisson $B$-algebra flat as a $B$-module and $\iota \colon \A \otimes_B \C_+ \to A$ is an isomorphism of graded Poisson algebras.
The notion of an isomorphism of graded Poisson deformations of $A$ over the base $B$ is the obvious one (see \cite[Definition 2.4]{ACET}, for example).

Consider the functor $$\PD_A \colon \GrAlg \to \Sets$$ which associates to $B \in \GrAlg$ the set $\PD_A(B)$ of isoclasses of graded Poisson deformations of $A$ over $B$.
For $\beta \colon B \to B'$ in $\GrAlg$ the morphism $\PD_A(\beta)$ maps the isoclass of $(\A, \iota)$ to the isoclass of $(\A', \iota')$, where $\A' = \A \otimes_B B'$ and $\iota'$ is defined by the composition of isomorphisms $\A' \otimes_{B'} \C_+ \to \A \otimes_B \C_+ \to A$ where the second map is $\iota$.

Namikawa \cite{Na1} has shown that there exists $B_u \in \GrAlg$ such that $\PD_A$ is representable over $B_u$ 
i.e. the functors $\PD_A$ and $\Hom_{\GrAlg}(B_u, -)$ are naturally isomorphic.
Let $(\A_u, \iota_u) \in \PD_A(B_u)$ correspond to $\id_{B_u}$: this is called a universal graded Poisson deformation of $A$.
The terminology comes from the fact that $(B_u, (\A_u, \iota_u))$ is a universal element of $\PD_A$ in the notation of \cite[III.1]{MacLane}, however we will always omit the universal base $B_u$ of the deformation for the sake of brevity.
The extent to which a universal element of $\PD_A$ is unique is discussed in \cite[Definition 2.6, Remark 2.7]{ACET}.

For $B \in \GrAlg$, the group $\Gamma$ acts on the set $\PD_A(B)$: indeed, $\gamma \in \Gamma$ maps the isoclass of $(\A, \iota)$ to the isoclass of $(\A, \gamma \circ \iota)$.
The representatives of isoclasses in the fixed-point set $\PD_A(B)^\Gamma$ are called Poisson $\Gamma$-deformations of $A$ over $B$. 
There is a well-defined functor $\PD_A^\Gamma \colon \GrAlg \to \Sets$ mapping $B \in \GrAlg$ to $\PD_A(B)^\Gamma$ and such that $\PD_A^\Gamma(\beta) =\PD_A(\beta)$ for $\beta$ a morphism in $\GrAlg$ \cite[Definiton 2.16]{ACET}.

It follows from the representability of $\PD_A$ that there exists a right $\Gamma$-action on $B_u$ mirroring the $\Gamma$-action on $\PD_A(B_u)$ \cite[Remark 2.18]{ACET}.
Moreover $\PD_A^\Gamma$ is representable over the algebra of coinvariants $(B_u)_\Gamma$ \cite[Proposition 2.23]{ACET} and if $\alpha_\Gamma \colon B_u \to (B_u)_\Gamma$ denotes the quotient morphism in $\GrAlg$, we shall call $\PD_A(\alpha_\Gamma)(\A_u, \iota_u)$ a universal Poisson $\Gamma$-deformation of $A$.

\subsection{Quantizations of Poisson deformations and the quantization functor}
\label{ss:quantandfunctor}
Retain the $\Gamma$-conical symplectic singularity $X$ from the previous section and let $A = \C[X]$, with Poisson bracket in degree $-d$.
The graded Poisson deformation functor has a non-commutative, filtered analogue which we now recall.
Let $B \in \SFAlg$. 
A filtered quantization (of a Poisson deformation) of $A$ over $B$ is a pair $(\Q, \iota)$ where $\Q$ is a filtered $B$-algebra with bracket in degree $-d$, flat as a $B$-module such that $(\gr \Q, \iota) \in \PD_A(\gr B)$ with $\gr B \to \gr \Q$ arising from $B \to \Q$ via the associated graded construction. 
As in the case of graded Poisson deformations, there is a definition of isomorphism between two filtered quantizations over the base $B$  \cite[Definition 2.9]{ACET}.

We have a functor $$\Qnt_A \colon \SFAlg \to \Sets$$ associating to $B \in \SFAlg$ the set $\Qnt_A(B)$ of isoclasses of filtered quantizations of $A$ over $B$. 
For $\beta \colon B \to B'$ in $\SFAlg$, $\Qnt_A(\beta)$ maps the isoclass of $(\Q, \iota)$ to the isoclass of $(\Q', \iota')$, with $\Q' = \Q \otimes_B B'$ and $\iota'$ defined by the composition of isomorphisms $\gr \Q' \otimes_{\gr B'} \C_+ \to \gr \Q \otimes_{\gr B} \C_+ \to A$ where the first map is obvious and the second one is $\iota$.

It follows from the work of Losev \cite{Lo} (see especially Proposition~3.5(1)) that the functors $\PD_A$ and $\Qnt_A$ satisfy an extremely nice compatibility property, and so we say that $A$ admits an  optimal quantization theory.
To be precise, this means that there exists $B_u \in \SFAlg$ such that $\Qnt_A$ is representable over $B_u$, the functor $\PD_A$ is representable over $\gr B_u \in \GrAlg$, and the two natural isomorphisms make the following diagram commute:
\begin{center}
\begin{tikzcd}
\Hom_{\SFAlg}(B_u, -) 
  \arrow[d,,"\texttt{gr}"'] 
   \arrow[r] & 
  \Qnt_{A}(-) \arrow[d,,"\texttt{gr}"] \\
\Hom_{\GrAlg}(\gr B_u, \gr -) \arrow[r,,]                  & \PD_A \gr(-)
\end{tikzcd}
\end{center}
where, for $B \in \SFAlg$, we set $\graded_B \colon \Hom_{\SFAlg}(B_u, B) \to \Hom_{\GrAlg}(\gr B_u, \gr B)$ to be the map $\beta \mapsto \gr \beta$.
We call the element $(\Q_u, \iota_u) \in \Qnt_A(B_u)$ corresponding to $\id_{B_u}$ a universal filtered quantization of $A$, and we remark that it satisfies a property which is analogous to the one of $(\A_u, \iota_u)$.
 See \cite[\textsection 2.5]{ACET} for more detail.

There is also a quantum counterpart to the theory of Poisson $\Gamma$-deformations: for $B \in \SFAlg$, the group $\Gamma$ acts on the set $\Qnt_A(B)$ with $\gamma \in \Gamma$ mapping the isoclass of $(\Q, \iota)$ to the isoclass of $(\Q, \gamma \circ \iota)$.
We call representatives of isoclasses in the fixed-point sets $\Qnt_A(B)^\Gamma$ the filtered $\Gamma$-quantizations of $A$ over $B$. 
This allows to define a functor $\Qnt_A^\Gamma \colon \SFAlg \to \Sets$ whose definition is analogous to $\PD_A^\Gamma$ \cite[Definiton 2.24]{ACET}, and a right $\Gamma$-action on $B_u$ (see \cite[\S 2.7]{ACET}).

The notion of optimal quantization theory can be upgraded to the equivariant setting in the obvious manner: we say that $A$ admits an optimal $\Gamma$-quantization theory if there exists $B_{u,\Gamma} \in \SFAlg$ such that $\Qnt_A^\Gamma$, resp. $\PD_A^\Gamma$, is representable over $B_{u,\Gamma}$, resp. $\gr B_{u,\Gamma} \in \GrAlg$, and the two natural isomorphisms make the following diagram commute:
\begin{center}
\begin{tikzcd}
\Hom_{\SFAlg}(B_{u,\Gamma}, -)   \arrow[d,,"\graded"']  \arrow[r] & \Qnt_{A}^\Gamma(-) \arrow[d,,"\graded"] \\
  \Hom_{\GrAlg}(\gr B_{u,\Gamma}, \gr -) \arrow[r] &  \PD_A^\Gamma \gr(-)
\end{tikzcd}
\end{center}

In \cite[Theorem 2.39]{ACET} we observed that whenever a Poisson algebra $A$ admits an optimal quantization theory it also admits an optimal $\Gamma$-quantization theory, with the coinvariant algebra $(B_u)_\Gamma$ as the representing object $B_{u,\Gamma} \in \SFAlg$.
Finally, if $\alpha_\Gamma \colon B_u \to (B_u)_\Gamma$ is the quotient morphism in $\SFAlg$, then we call $\Qnt_A(\alpha_\Gamma)(\Q_u, \iota_u)$
a universal $\Gamma$-quantization of $A$.

\subsection{The Slodowy slice as a Poisson deformation}
\label{ss:slodowyPoissondeform}

Let $G$ be a complex, connected simple algebraic group and $\g = \Lie(G)$ and let $\Nc:= \Nc(\g)$ be the nilpotent cone of $\g$.
For each $e \in \Nc$ we can choose an $\sl_2$-triple $(e,h,f)$ containig $e$ as the nilpositive element, and define the affine subvariety $\Ss_e := e + \g^f$, known as the Slodowy slice to $G \cdot e$ at $e$. It is a classical result that $\Ss_e $ intersects $G \cdot e$ transversally at $e$.

We now describe a conical structure on $\Ss_e$ and, equivalently, a non-negative grading on $\C[\Ss_e]$. The semisimple derivation $\ad(h)$ has integral eigenvalues and so it defines a grading $\g = \bigoplus_{i \in \Z} \g(i)$, called the Dynkin grading. 
If we let $\nu : \C^\times \to G$ be the unique cocharacter such that $d_1\nu(1) = h$ then we can define a $\C^\times$-action on $\g$ by
\begin{eqnarray}
\label{e:Kazhdanbycochar}
(t, x) \mapsto t^{-2} \Ad(\nu(t)) x
\end{eqnarray}
This action preserves $\Ss_e$ and gives a contracting $\C^\times$-action with unique fixed point $e$, known as the {\it Kazhdan action}. The induced grading on the $\C[\Ss_e]$ is known as the {\it Kazdan grading}.

The Lie bracket on $\g$ extends uniquely to a Lie bracket on $\C[\g^*]$ with $\g$ placed in degree $2$ so that the Poisson bracket is graded in degree $-2$. 
The Casimirs $\Cas \C[\g^*]$ coincide with the $G$-invariant functions $\C[\g^*]^G$. 
Using a choice of  $G$-equivariant isomorphism $\kappa \colon \g \to \g^*$ we transport this to a Poisson structure on $S(\g^*) = \C[\g]$.
Thanks to \cite{GG} the variety $\Ss_e$ can be equipped with a Poisson structure by applying Hamiltonian reduction to $\g$.
\begin{Lemma}
\label{L:casimirlemma}
\cite[Footnote~1]{PrJI}
Restriction $\C[\g] \to \C[\Ss_e]$ gives an isomorphism $\C[\g]^G \simeq \Cas \C[\Ss_e].$
\end{Lemma}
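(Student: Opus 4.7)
The plan is to establish both containments of the claimed equality via the Hamiltonian reduction description of the Slodowy slice.

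For the inclusion $\C[\g]^G \hookrightarrow \Cas \C[\Ss_e]$, I would rely on the Gan--Ginzburg realisation of $\Ss_e$ as a Hamiltonian reduction of $\g \simeq \g^*$ by a unipotent subgroup $M \subseteq G$. Every $f \in \C[\g]^G = \Cas \C[\g]$ is $M$-invariant and Poisson-commutes with every regular function on $\g$, so it descends to a Casimir of the reduced algebra $\C[\Ss_e]$, and a direct check shows that this descent agrees with ordinary restriction of functions. Injectivity of restriction is then immediate from the Slodowy--Premet theorem: since the restricted adjoint quotient $\Ss_e \to \g /\!/ G$ is flat and surjective, its pullback $\C[\g /\!/ G] \to \C[\Ss_e]$ is injective.

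For the reverse inclusion, I would analyse the symplectic leaves of $\Ss_e$. Because $\Ss_e$ arises by Hamiltonian reduction, its symplectic leaves are the connected components of the transversal intersections $\Ss_e \cap \mathcal{O}$, as $\mathcal{O}$ ranges over the adjoint $G$-orbits meeting $\Ss_e$. When $\mathcal{O}$ is regular semisimple, $\mathcal{O}$ is an entire fiber of the adjoint quotient $\g \to \g /\!/ G$, so $\Ss_e \cap \mathcal{O}$ equals the whole fiber of the restricted map $\Ss_e \to \g /\!/ G$ above the image of $\mathcal{O}$. Hence on a Zariski-dense open subset of $\Ss_e$ the symplectic leaves coincide with the fibers of $\Ss_e \to \g /\!/ G$. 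Any Casimir is constant on each symplectic leaf, hence on the generic fibers; combining this with flatness and reducedness of $\Ss_e \to \g /\!/ G$ forces the Casimir to be the pullback of a regular function on $\g /\!/ G$, that is, an element of $\C[\g]^G$.

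I expect the principal technical difficulty to lie in verifying that the generic fibers of $\Ss_e \to \g /\!/ G$ are irreducible, so that constancy on a Zariski-dense symplectic leaf inside such a fiber upgrades to constancy on the entire fiber. This should follow from the irreducibility of regular semisimple $G$-orbits together with the Slodowy--Premet flatness theorem; once this is in place, the two containments combine to yield the claimed Poisson isomorphism.
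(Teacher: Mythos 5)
The paper offers no argument for this lemma at all: it is quoted directly from Premet \cite[Footnote~1]{PrJI}, whose justification rests on \cite[Corollary~3.3]{GG} (description of the symplectic leaves of $\Ss_e$) and \cite[Theorem~5.4]{PrST} (properties of the restricted adjoint quotient). Your proposal essentially reconstructs that underlying argument, and the overall strategy is sound: the inclusion $\C[\g]^G\hookrightarrow\Cas\C[\Ss_e]$ via Hamiltonian reduction plus injectivity from faithful flatness of $\Ss_e\to\g/\!/G$ is correct, and for the reverse inclusion it is right that Casimirs are constant on leaves, that the leaves are the connected components of the orbit intersections $\Ss_e\cap\mathcal{O}$, and that over the regular semisimple locus the fibres of $\Ss_e\to\g/\!/G$ are single such intersections.

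Two points deserve correction or tightening. First, the irreducibility (equivalently connectedness) of the fibres of $\Ss_e\to\g/\!/G$ does \emph{not} follow from irreducibility of regular semisimple orbits together with flatness, as you suggest: a transversal slice of a smooth irreducible variety can perfectly well be disconnected (already $x\mapsto x^2$ shows a flat dominant map from an irreducible source can have disconnected generic fibres). This connectedness is itself a substantive part of \cite[Theorem~5.4]{PrST}, which asserts that \emph{all} fibres of the restricted adjoint quotient are irreducible reduced complete intersections; you should invoke that statement rather than try to derive it. Second, the final step --- passing from ``constant on all fibres over a dense open $U\subseteq\g/\!/G$'' to ``lies in the image of $\C[\g]^G$'' --- needs a word more than ``flatness and reducedness'': the descended function $\bar c$ is a priori only regular on $U$, whose complement (the discriminant) has codimension one, so normality of $\g/\!/G$ alone does not finish the job. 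One standard fix is to note that $\pi^*\bar c$ agrees with the everywhere-regular $c$ as a rational function on the irreducible variety $\Ss_e$, so $\bar c$ can have no pole along any divisor of $\g/\!/G$ (its order along a divisor $D$ is detected, up to a positive multiple, by the order of $\pi^*\bar c$ along a component of $\pi^{-1}(D)$, using surjectivity and flatness of $\pi$), whence $\bar c$ is regular on the normal affine space $\g/\!/G$. With those two repairs your argument is complete and is, in substance, the one the cited footnote encodes.
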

The {\it nilpotent Slodowy variety at $e$} is $\Nc_e := \Nc \cap \Ss_e$. This lies in the union of nilpotent orbits whose closure contain $G \cdot e$ and it is a Poisson subvariety of $\Ss_e$. In fact it carries the structure of a conical symplectic singularity (see \cite[\textsection~3.1]{ACET} for a more detailed survey of these facts).

It is a well known theorem of Kostant (see \cite[\textsection 7]{JaNO} for example) that we have a graded Poisson isomorphism $\C[\g] \otimes_{\C[\g]^G} \C_+ \simeq  \C[\Nc]$, and by \cite[Theorem~5.4]{PrST} we obtain an isomorphism
$\iota \colon \C[\Ss_e] \otimes_{\C[\g]^G} \C_+ \to \C[\Nc_e]$.
To rephrase this in the language of the functor of Poisson deformations, we have that 
$$(\C[\Ss_e],  \i) \in \PD_{\C[\Nc_e]}(\C[\g]^G).$$
 
\subsection{The finite $W$-algebra as a quantization}
\label{ss:finiteW}
Denote by $U(\g)$ the universal enveloping algebra of $\g$, and extend the Kazhdan grading on $\g$ to a filtration on $U(\g)$.

Put $\chi \coloneqq \kappa(e)$.
The set $\g(<-1)_{\chi} := \{ x - \chi(x) \mid x\in \g(<-1)\} \subseteq U(\g)$ is stable under $\ad \g(<0)$. The finite $W$-algebra is the quantum Hamiltonian reduction
$$U(\g,e) := \big(U(\g) / U(\g) \g(<\!-1)_{\chi} \big)^{\ad \g(<0)}.$$

The subquotient inherits an algebra structure from $U(\g)$ with remarkable properties. The Kazhdan filtration descends to $U(\g,e)$ and defines a non-negative filtration on $U(\g,e)$. 
The commutator on $U(\g,e)$ lies in filtered degree $-2$ and $\gr U(\g,e) \simeq \C[\Ss_e]$ as Kazhdan graded Poisson, algebras by \cite[Proposition 5.2]{GG}. Furthermore, $U(\g,e)$ only depends on the adjoint orbit of $e$ up to isomorphism \cite{PrST, GG}.

Denote the centre of $U(\g,e)$ by $Z(\g,e)$. The natural map $U(\g)^G = Z(\g) \to Z(\g,e)$ is an isomorphism (see \cite[Lemma~3.3]{ACET} for example). 
Furthermore the inclusion map $Z(\g,e) \to U(\g,e)$ is flat and strictly filtered with respect to the Kazhdan filtration and $\gr Z(\g) \simeq \C[\g]^G$, where the associated graded is taken with respect to the Kazhdan grading. 
Combining the details above we conclude that $$(U(\g,e),  \i) \in \Qnt_{\C[\Nc_e]}(Z(\g)),$$ where $\iota$ is defined in Section \ref{ss:slodowyPoissondeform}.
We refer the reader to \cite[Lemma~3.4(2)]{ACET} for more detail.

\begin{Theorem}[\cite{LNS} Theorem 1.2 \& 1.3; \cite{ACET} Theorem 1.2]
\label{T:universaltheorem}
Let $e \in \Nc(\g)$ and let $\Nc_e$, resp. $\Ss_e$, resp. $U(\g, e)$ be the nilpotent Slodowy variety, resp. the Sodowy slice, resp. the finite $W$-algebra at $e$. Then the following are equivalent:
\begin{enumerate}
\setlength{\itemsep}{4pt}
\item $\PD_{\C[\Nc_e]}$ is represented by $\C[\g]^G$ and $(\C[\Ss_e], \iota)$ is a universal Poisson deformation;
\item $\Qnt_{\C[\Nc_e]}$ is represented by $Z(\g)$ and $(U(\g,e), \iota)$ is a universal filtered quantization;
\item the pair $(\g, G \cdot e)$ does not appear in Table \ref{table}.
\end{enumerate} 
\end{Theorem}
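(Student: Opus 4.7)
The plan is to establish the equivalence via two reductions. First I would handle $(1) \Leftrightarrow (2)$ using the optimal quantization formalism of Section~\ref{ss:quantandfunctor}, and then reduce $(1) \Leftrightarrow (3)$ to a computation of the Namikawa Weyl group together with a case-by-case comparison of classifying maps.

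For $(1) \Leftrightarrow (2)$, the key input is Losev's construction \cite{Lo}, which guarantees that $\C[\Nc_e]$ admits an optimal quantization theory: there exists $B_u \in \SFAlg$ representing $\Qnt_{\C[\Nc_e]}$ such that $\gr B_u$ represents $\PD_{\C[\Nc_e]}$, and the associated graded functor makes the universal objects correspond via the commutative square of Section~\ref{ss:quantandfunctor}. Combined with the identifications $\gr U(\g,e) \simeq \C[\Ss_e]$ and $\gr Z(\g) \simeq \C[\g]^G$ recalled in Sections~\ref{ss:slodowyPoissondeform}--\ref{ss:finiteW}, which match the canonical isomorphism $\iota$ on both sides, universality transports from the filtered to the graded Poisson side and back without any further work.

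For $(1) \Leftrightarrow (3)$, I would invoke Namikawa's theorem \cite{Na1}: the functor $\PD_{\C[\Nc_e]}$ is representable over a graded polynomial algebra $\C[H^2(\tN_e, \C)^{W_{\Nc_e}}]$, where $W_{\Nc_e}$ is the Namikawa Weyl group, generated by monodromies around codimension-two symplectic leaves. Since $(\C[\Ss_e], \iota) \in \PD_{\C[\Nc_e]}(\C[\g]^G)$, there is a canonical classifying morphism from this universal base to $\C[\g]^G$, and $(\C[\Ss_e], \iota)$ is universal if and only if this morphism is an isomorphism. The strategy is then three-fold: first, compute $H^2(\tN_e, \C)$ from the combinatorics of the Springer resolution; second, identify $W_{\Nc_e}$ from the local transverse slices to codimension-two leaves, which by Brieskorn--Slodowy theory are Kleinian singularities whose resolution graphs determine both a canonical action on $H^2$ and the reflection group; third, compare $\dim H^2(\tN_e, \C)^{W_{\Nc_e}}$ with $\rank \g$ and, when dimensions agree, verify that the classifying map is an isomorphism rather than a proper covering.

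The main obstacle is the last step, which requires a case-by-case inspection across the classification of nilpotent orbits, since the interplay between $W_{\Nc_e}$ and $\rank \g$ depends delicately on both $\g$ and $\O$. In the four exceptional families, universality fails for identifiable reasons: regular orbits have $\tN_e$ a point so $H^2 = 0$ while $\rank \g \ge 1$; subregular orbits in type ${\sf BCFG}$ produce a folding of an ${\sf ADE}$ root system, lowering the dimension of invariants; two-block orbits in type ${\sf C}$ carry a hidden $\Z_2$-symmetry (made precise in the present paper via Theorem~\ref{T:zeromaintheorem}); and the $8$-dimensional orbit in ${\sf G}_2$ is sporadic. Establishing each obstruction in a uniform algebraic language, and conversely verifying that the classifying map is an isomorphism in all remaining cases, is the hard technical core carried out in \cite{LNS} for $(1)$ and in \cite{ACET} for $(2)$, with the Step~1 reduction allowing these to combine into the stated three-way equivalence.
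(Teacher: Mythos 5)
Your proposal is correct and matches the paper's treatment: the paper offers no proof of Theorem~\ref{T:universaltheorem}, citing \cite{LNS} for the equivalence of (1) and (3) and \cite{ACET} for the equivalence with (2), and your sketch is a faithful outline of exactly those arguments (Losev's optimal quantization theory for $(1)\Leftrightarrow(2)$, and Namikawa's universal base $H^2(\tN_e,\C)^{W}$ compared with $\g/\!/G$ via the classifying map for $(1)\Leftrightarrow(3)$). Since you, like the paper, defer the case-by-case computation of the Namikawa Weyl groups to \cite{LNS}, there is nothing further to supply.
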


\section{Presentations of Slodowy slices associated to two row partitions}
\label{S:tworow}

\subsection{Poisson presentations}
Let $X$ be a set. 
The free Lie algebra $L_X$ on $X$ is the initial object in the category of complex Lie algebras generated by $X$ and can be constructed as the Lie subalgebra of the free associative algebra $\C\langle X \rangle$ generated by $X$, with the commutator as a Lie bracket, see \cite[Theorem 4.2]{Serre}.

The {\it free Poisson algebra generated by $X$} is the initial object in the category of complex Poisson algebras generated by $X$. It can be constructed as the symmetric algebra $S(L_X)$. If there is a Poisson surjection $S(L_X) \onto A$ then we say that {\it $A$ is Poisson generated by (the image of) $X$}. 
It is important to distinguish this from $A$ being generated by (the image of) $X$ as a commutative algebra, and we will make this distinction clear whenever necessary.

We say that a complex Poisson algebra $A$ has Poisson generators $X$ and relations $Y \subseteq S(L_X)$  if there is a surjective Poisson homomorphism $S(L_X) \onto A$ and the kernel is the Poisson ideal generated by $Y$.

\subsection{Poisson presentations of Slodowy slices}
\label{ss:Poissonpresentations}

Let $\g$ be a simple Lie algebra  of type ${\mathsf C}_n$, or ${\mathsf D}_n$ and and let $\lambda$ be a partition of $2n$.
Suppose that we are in one of the following situations:
 \begin{enumerate}
 \item[(i)]  $\g$ is of type {\sf C} and all parts of $\lambda$ are even;
 \item[(ii)] $\g$ is of type {\sf D} and all parts of $\lambda$ are odd.
 \end{enumerate}

In \cite{To} the second author provided a Poisson presentation for  $\C[\Ss_e]$ in the cases where $e \in \Nc(\g)$ has partition satisfying (i) or (ii).
In the current setting we only need the presentation in types {\sf C} and {\sf D} when $e$ has two Jordan blocks.
The reader should note that the presentation here is simpler than the general case, in particular relations (1.9)--(1.12) from \cite[Theorem~1.3]{To} are void in the two-block setting, and the relation \eqref{e:symplecticdyrel}, which is special to the symplectic case, can be expressed in a closed form for $n  = 2$ (see \cite[Example~4.9]{To}).

Until otherwise stated let $\g$ be a simple Lie algebra of type ${\mathsf C}_n$ or ${\mathsf D}_n$ and fix an element $e \in \Nc(\g)$ with partion $\lambda = (\lambda_1, \lambda_2)$ satisfying (i) or (ii). 

Write 
$$s_{1,2} = \frac{\lambda_2 - \lambda_1}{2}.$$
Also make the following notation for $r,s \in \Z$
\begin{eqnarray}
\varpi_{r,s} := (-1)^r - (-1)^s = \left\{
\begin{array}{cl} 2 & \text{ if } r \in 2\Z, s\in 2\Z + 1,\\
0 & \text{ if } r + s \in 2\Z,\\
-2 &  \text{ if } r\in 2\Z + 1, s \in 2\Z. \end{array} \right.
\end{eqnarray}

\begin{Theorem} \cite[Theorem~1.3, Proposition~4.7 \& Example~4.9]{To} 
\label{T:PDyangian}
Let $\g$ be a simple Lie algebra of type $\mathsf{C}_n$ or $\mathsf{D}_n$ and let $e \in \Nc(\g)$ with partition $(\lambda_1, \lambda_2) \vdash 2n$ satisfying (i) or (ii).
The algebra $\C[\Ss_e]$ has Poisson generators
\begin{eqnarray}
\label{e:DYgens}
\{\eta_i^{(2r)} \mid 1\le i \le 2, \ r >0\} \cup \{ \theta^{(r)} \mid  r   > s_{1,2}\}
\end{eqnarray}
together with the following relations
\begin{eqnarray}
\label{e:dyrel1}
& & \{\eta_i^{(2r)}, \eta_j^{(2s)}\} = 0 \\ 
\label{e:dyrel2}
& &\big\{\eta_i^{(2r)}, \theta_j^{(s)}\big\} = (\delta_{i,j} - \delta_{i,j+1}) \sum_{t=0}^{r-1} \eta_i^{(2t)} \theta_j^{(2r+s -1- 2t)}
\end{eqnarray}
\begin{eqnarray}
\label{e:dyrel34}
\{\theta^{(r)}, \theta^{(s)}\} = \frac{1}{2} \sum_{t=r}^{s-1} \theta^{(t)} \theta^{(r+s-1-t)} + (-1)^{s_{1,2}}\varpi_{r,s} \sum_{t=0}^{(r+s-1)/2} \eta_{2}^{(r+s-1-2t)} \teta_{1}^{2t} & \text{ for } & r < s.
\end{eqnarray}

\begin{eqnarray}\label{e:dyrel5}
\eta_1^{(2r)} = 0 & \text{ for } & 2r > \lambda_1\\
\label{e:symplecticdyrel}
\sum_{t=0}^{\lambda_1/2} \eta_1^{(2t)} \theta^{(\lambda_1 - 2t + s_{1,2} + 1)} = 0 &  & \text{ when } \g = \sp_{2n}.
\end{eqnarray}
where we adopt the convention $\eta_i^{(0)} = \teta_i^{(0)} = 1$ and the elements $\{\teta_1^{(2r)} \mid r\in \Z_{\ge 0}\}$ are defined via the recursion  
\begin{eqnarray}
\label{e:tetatwisteddefinition}
\teta_1^{(2r)} := -\sum_{t=1}^r \eta_1^{(2t)} \teta_1^{(2r-2t)}.
\end{eqnarray}
Furthermore the Kazhdan grading on $\C[\Ss_e]$ places $\eta_i^{(2r)}$ in degree $4r$ and $\theta^{(r)}$ in degree $2r$, and the Poisson bracket lies in degree $-2$.\hfill\qed
\end{Theorem}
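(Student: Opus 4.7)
The plan is to deduce this presentation directly from the general multi-block results of \cite{To}, specialising to the two-block case. \cite[Theorem~1.3]{To} provides a Poisson presentation of $\C[\Ss_e]$ whenever the parts of the partition of $e$ share the uniform parity compatible with the type of $\g$, as in hypotheses (i), (ii). Its generators are indexed by pairs $(i,j)$ with $1 \le i \le j \le k$, where $k$ is the number of Jordan blocks, and it comes equipped with Yangian-style Poisson relations together with truncation and, in the symplectic case, an additional ``symplectic'' relation.

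In our setting $k=2$, so the only index pairs available are $(1,1)$, $(2,2)$ and $(1,2)$; these correspond to $\eta_1$, $\eta_2$ and $\theta$ in the notation of \eqref{e:DYgens}. The relations \cite[(1.9)--(1.12)]{To} all involve configurations of indices forcing at least three distinct blocks, so they become vacuous here. What remains are the two-block analogues \eqref{e:dyrel1}--\eqref{e:dyrel34} of the bracket relations among the $\eta$'s and $\theta$'s, the block-truncation \eqref{e:dyrel5} reflecting the $\ad(h)$-weight bounds on $\g^f$, and the auxiliary recursion \eqref{e:tetatwisteddefinition} for $\teta_1^{(2r)}$, which is a formal Poisson-algebra identity expressing the inverse of the generating series of $\eta_1$. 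The Kazhdan degrees $\deg \eta_i^{(2r)} = 4r$ and $\deg\theta^{(r)} = 2r$, and the fact that the Poisson bracket lies in degree $-2$, follow directly from the Dynkin grading on $\g^f$ by the construction in \cite[\textsection 4]{To}.

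The genuinely delicate step is the closed form of the symplectic relation \eqref{e:symplecticdyrel}. In the general $k$-block presentation the symplectic relation is stated as a vanishing condition on certain coefficients of a Poisson generating series, defined recursively. For $k=2$ this recursion can be telescoped into the finite sum displayed in \eqref{e:symplecticdyrel}, and the explicit computation is carried out in \cite[Example~4.9]{To} after the key intermediate identity from \cite[Proposition~4.7]{To}. Thus the main obstacle is not conceptual but combinatorial bookkeeping: one must verify that the Poisson ideal generated by the specialised relations \eqref{e:dyrel1}--\eqref{e:symplecticdyrel} coincides with the Poisson ideal obtained by restricting the general relations of \cite[Theorem~1.3]{To} to $k=2$. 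Once this translation of notation is performed and the Kazhdan degrees are matched, the theorem follows.
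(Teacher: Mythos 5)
Your proposal matches the paper's treatment exactly: the theorem is stated there as a direct citation of \cite[Theorem~1.3, Proposition~4.7 \& Example~4.9]{To}, with the accompanying remark that relations (1.9)--(1.12) of \cite{To} are void in the two-block setting and that the symplectic relation admits the closed form of \eqref{e:symplecticdyrel} via \cite[Example~4.9]{To}. Your specialisation argument and identification of the delicate step (the closed form of the symplectic relation) are precisely what the authors rely on, so there is nothing to add.
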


Now let $\g := \so_{2n}$ and $\g_0 := \sp_{2n-2}$ with $n \geq 3$ and fix a partition $\lambda = (\lambda_1, \lambda_2) \vdash 2n$ satisfying (ii) from Section \ref{ss:Poissonpresentations}
 and choose nilpotent elements $e \in \g$ and $e_0 \in \g_0$ with partitions $(\lambda_1, \lambda_2)$ and $(\lambda_1-1, \lambda_2-1)$ respectively.
 In particular, $(\lambda_1-1, \lambda_2-1)$  satisfies (i).
Also choose $\sl_2$-triples for these nilpotent elements and build the Slodowy slices $\Ss_e \subset \g$ and $\Ss_{e_0} \subset \g_0$.

\begin{Corollary}
\label{C:Poissonsurj}
There is a Kazhdan graded surjection of Poisson algebras 
\begin{equation}\label{eq_surj}
\varphi \colon \C[\Ss_e] \to \C[\Ss_{e_0}]
\end{equation}
defined by sending the generators \eqref{e:DYgens} of $\C[\Ss_{e}]$ to those elements of $\C[\Ss_{e_0}]$ with the same labels. 
The kernel of this homomorphism is Poisson generated by the element \eqref{e:symplecticdyrel}.
\end{Corollary}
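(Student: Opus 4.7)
The plan is to read off both presentations from Theorem~\ref{T:PDyangian} and compare them directly; the corollary will then be essentially a tautology once the indexing of the generators and relations has been matched up.

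First I would check that the Poisson generators in \eqref{e:DYgens} for $\C[\Ss_e]$ and for $\C[\Ss_{e_0}]$ really are indexed by the same set. The key observation is that the parameter $s_{1,2}$ is invariant under the passage $(\lambda_1,\lambda_2)\mapsto(\lambda_1-1,\lambda_2-1)$, since
\[
\tfrac{(\lambda_2-1)-(\lambda_1-1)}{2}=\tfrac{\lambda_2-\lambda_1}{2}.
\]
So the generating set $\{\eta_i^{(2r)}\}\cup\{\theta^{(r)}\}$ is the same in both presentations, and the Kazhdan degrees of these generators (namely $4r$ and $2r$ respectively) are the same too. Consequently the assignment on generators sending each symbol to its namesake extends uniquely to a $\C^\times$-equivariant Poisson homomorphism from the free Poisson algebra on \eqref{e:DYgens} to $\C[\Ss_{e_0}]$, provided the ideal of relations for $\C[\Ss_e]$ is mapped to zero.

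Next I would verify that relations \eqref{e:dyrel1}--\eqref{e:dyrel34} are literally identical in both presentations, since they are governed only by $s_{1,2}$ (which is unchanged). The only nontrivial comparison is for \eqref{e:dyrel5}. For the type {\sf D} element $e$ the relation kills $\eta_1^{(2r)}$ whenever $2r>\lambda_1$, whereas for the type {\sf C} element $e_0$ it kills $\eta_1^{(2r)}$ whenever $2r>\lambda_1-1$. Since $\lambda_1$ is odd and $2r$ is even, both conditions are equivalent to $2r\ge\lambda_1+1$, so the two vanishing ranges coincide. Hence every defining relation of $\C[\Ss_e]$ already holds in $\C[\Ss_{e_0}]$, and the map $\varphi$ of \eqref{eq_surj} is well-defined. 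Surjectivity is immediate, since its image contains all the Poisson generators of $\C[\Ss_{e_0}]$.

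Finally I would compute $\ker\varphi$. By the previous paragraph, the presentation of $\C[\Ss_{e_0}]$ differs from that of $\C[\Ss_e]$ only by the extra symplectic relation \eqref{e:symplecticdyrel} applied to the partition $(\lambda_1-1,\lambda_2-1)$, i.e.
\[
\sum_{t=0}^{(\lambda_1-1)/2}\eta_1^{(2t)}\,\theta^{(\lambda_1-2t+s_{1,2})}=0.
\]
This element lies in $\ker\varphi$, and quotienting $\C[\Ss_e]$ by the Poisson ideal it generates yields a Poisson algebra whose presentation matches, generator by generator and relation by relation, the presentation of $\C[\Ss_{e_0}]$ given by Theorem~\ref{T:PDyangian}. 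Applying the theorem in the reverse direction identifies this quotient with $\C[\Ss_{e_0}]$, and the induced map is inverse to the one induced by $\varphi$. Hence $\ker\varphi$ is exactly the Poisson ideal generated by \eqref{e:symplecticdyrel}. No substantive obstacle is expected: the whole argument is bookkeeping against Theorem~\ref{T:PDyangian}, and the only point requiring a little care is the parity check that puts the two instances of \eqref{e:dyrel5} on the same footing.
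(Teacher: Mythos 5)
Your argument is correct and takes essentially the same route as the paper: both proofs compare the two presentations of Theorem~\ref{T:PDyangian} term by term, using that $s_{1,2}$ is unchanged under $(\lambda_1,\lambda_2)\mapsto(\lambda_1-1,\lambda_2-1)$ and the parity observation that makes the two instances of \eqref{e:dyrel5} impose identical constraints, so that the presentations differ only by the extra relation \eqref{e:symplecticdyrel}. Your write-up is if anything more explicit than the paper's very terse proof, in particular in spelling out why the kernel is exactly (and not merely contains) the Poisson ideal generated by \eqref{e:symplecticdyrel}.
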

\begin{proof}
The relations \eqref{e:dyrel1}--\eqref{e:dyrel34} are identical for both $\C[\Ss_{e}]$ and $\C[\Ss_{e_0}]$.
For parity reasons the sets $\{s \mid 2s > \lambda_1\}$ and $\{s \mid 2s > \lambda_1-1\}$ are equal, and so relation \eqref{e:symplecticdyrel} places precisely the same constraints on the generators of $\C[\Ss_{e}]$ and $\C[\Ss_{e_0}]$.

Since the Poisson generator \eqref{e:dyrel5} of the kernel of the map is homogeneous with respect to this grading, it follows that $\varphi$ is graded.
\end{proof}

\section{Slodowy slices in orthogonal algebras}
\label{S:slicesinorthogonal}

\subsection{Invariant theory for classical Lie algebras}
\label{ss:invarianttheory}

Let $n > 0$ and for $x \in \gl_{2n}$, consider the characteristic polynomial $\det(t \mathbb{I}_{2n} - x) = t^{2n} + \sum_{i=1}^{2n}q_i(x)t^{2n-i}\in \C[\gl_{2n}]^{\GL_{2n}}[t]$; for $i = 1, \dots, 2n$, the element $q_i$ is a homogeneous polynomial of degree $i$ in $\C[\gl_{2n}]^{\GL_{2n}}$.
In fact $\{q_i \mid i=1, \dots, 2n\}$ is a complete system of algebraically independent generators of the algebra $\C[\gl_{2n}]^{\GL_{2n}}$ \cite[Proposition 7.9]{JaNO}.
Note that, up to sign, $q_1$ is the trace and $q_{2n}$ is the determinant.

Now let $n \geq 3$ and set $\tilde G:= \OO_{2n}$, $G := \SO_{2n}$, and $\g := \so_{2n}$.
For convenience we assume that these groups stabilise the bilinear form associated to the identity matrix (see Remark~\ref{R:Pfaffrem}).
We recall some aspects of the invariant theory for the adjoint representation of $G$, and we refer the reader to \cite[\S 7.7]{JaNO} for a good introductory survey. 

Consider the restriction of $q_i$ to $\g$, and denote it $p_i$.
 We have $p_i \in \C[\g]^{G}$, and furthermore $p_i = 0$ for $i$ odd, whilst $p_i \ne 0$ for $i$ even.
 It is well-known \cite[\S 7.7]{JaNO} that $p_{2n}$ admits a square root in $\C[\g]^{G}$ known as the {\it Pfaffian}, and we will denote it $p$ throughout this article.
The polynomials $p_2, p_4,...,p_{2n-2}, p$ form a complete set of algebraically independent generators of $\C[\g]^{G}$.

The determinant $\det \colon \tilde G \to \C^\times$ takes values in the subgroup $\{\pm 1\} \subseteq \C^\times$ and $G$ is the kernel of this homomorphism. Therefore the outer automorphism group $\Gamma := \tilde G/G$ of $\g$  is a cyclic group  of order two. 

 Note that the action of $\tilde G$ on $\C[\g]^G$ factors to an action of $\Gamma$, and we can characterise the Pfaffian as follows.
\begin{Lemma}
\label{L:characterisePfaffian}
$\C p$ is the unique one dimensional subspace of $\C[\g]^{G}$ such that:
\begin{enumerate}
\item $\C p$ lies in degree $n$;
\item $\C p$ is isomorphic to the unique non-trivial representation of $\Gamma$.
\end{enumerate}
\end{Lemma}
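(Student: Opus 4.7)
The plan is to determine the $\Gamma$-isotypic decomposition of the degree $n$ homogeneous component $\C[\g]^G_n$ using the explicit polynomial generators of $\C[\g]^G$ recalled above, and then to identify $\C p$ as the unique sign isotypic line in this component.

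First I would verify that $\C p$ itself satisfies the two required conditions. Condition (1) is immediate from the definition of the Pfaffian, which is a homogeneous polynomial of degree $n$. For condition (2), I would invoke the classical identity $\mathrm{Pf}(A X A^T) = \det(A)\,\mathrm{Pf}(X)$ for $A \in \GL_{2n}$ and $X$ skew-symmetric. Since under our convention $\g = \so_{2n}$ consists of skew-symmetric matrices and $\tilde G$ acts on $\g$ by $g \cdot X = gXg^{-1} = gXg^T$, any $\sigma \in \tilde G \setminus G$ satisfies $\det(\sigma) = -1$ and hence $\sigma \cdot p = -p$. Thus $\C p$ is a one-dimensional subspace in degree $n$ carrying the sign character of $\Gamma$.

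For uniqueness, I would use that $\C[\g]^G$ is the polynomial algebra on $p_2, p_4, \dots, p_{2n-2}, p$. Each $p_{2i}$ is the restriction of the $\GL_{2n}$-invariant $q_{2i}$, hence is $\tilde G$-invariant, hence lies in the trivial $\Gamma$-isotypic component. Therefore monomials of the form $p_2^{b_1} p_4^{b_2} \cdots p_{2n-2}^{b_{n-1}}$ are $\Gamma$-trivial, while monomials of the form $p \cdot p_2^{b_1} \cdots p_{2n-2}^{b_{n-1}}$ lie in the sign isotypic. A monomial $p^a \prod_i p_{2i}^{b_i}$ of total Kazhdan degree $n$ satisfies $na + \sum_i 2i\, b_i = n$, which forces $a \in \{0,1\}$; moreover $a=1$ forces all $b_i = 0$. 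Hence the sign isotypic component of $\C[\g]^G_n$ is exactly $\C p$, and any one-dimensional subspace of $\C[\g]^G_n$ isomorphic to the sign representation must coincide with $\C p$.

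I expect no serious obstacle: the Pfaffian transformation law is classical, and the rest is a bookkeeping argument on monomial degrees. The only subtlety is that when $n$ is even the space $\C[\g]^G_n$ also contains the $\tilde G$-invariant monomials in the $p_{2i}$, but these all form the trivial isotypic component and do not interfere with the uniqueness of $\C p$ as the sign isotypic line.
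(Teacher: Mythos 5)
Your proposal is correct and follows essentially the same route as the paper: both identify the sign character on $p$ via the transformation law $p(gxg^\top)=\det(g)p(x)$, observe that the $p_{2i}$ are $\tilde G$-invariant, and then use the polynomial generation of $\C[\g]^G$ by $p_2,\dots,p_{2n-2},p$ together with a degree count to isolate $\C p$ as the only sign-isotypic line in degree $n$ (the paper packages this as the decomposition $\C[\g]^G=\C[\g]^{\tilde G}\oplus\C[\g]^{\tilde G}p$). The only blemish is terminological: the degree $n$ in the lemma is the ordinary polynomial degree, not the Kazhdan degree (which would be $2n$), but your bookkeeping equation is the right one.
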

\begin{proof}
Consider the subalgebra $C \subseteq \C[\g]^{G}$ generated by $p_2, p_4,..., p_{2n}$. We claim that $C = \C[\g]^{\tilde G}$.

Since $\tilde G \subseteq \GL_{2n}$ it follows that every element of $C$ is $\tilde G$-fixed, and so $C \subseteq \C[\g]^{\tilde G}\subseteq \C[\g]^{G}$.
Since $p^2 = p_{2n}$ the quotient $\C[\g]^{G}/C$ is a 2-dimensional $\tilde G$-module with basis $\{ 1, p\}$.
Note that the $\tilde G$-action factors through $\Gamma$ and so $C = \C[\g]^{\tilde G}$ will follow if we demonstrate that $p$ spans a copy of the unique non-trivial simple $\Gamma$-module.

Thanks to our choice of symmetric bilinear form we have $\tilde G = \{ g\in \GL_{2n} \mid g^\top = g^{-1}\}$ and $\g = \{ x\in \gl_{2n} \mid x = - x^\top\}$.
As noted in \cite[\textsection 7.7]{JaNO}, for $g \in \tilde G, x \in \g$ we have $p(gxg^\top) = \det(g) p(x)$, and so $g\cdot p = -p$ for any element $g\in \tilde G$ of determinant $-1$. This shows that $C = \C[\g]^{\tilde G}$.

Since $p^2 = p_{2n}$ it follows that $\C[\g]^G$ is a free $\C[\g]^{\tilde G}$-module of rank two and
$$\C[\g]^G = \C[\g]^{\tilde G} \oplus \C[\g]^{\tilde G} p.$$
By our previous remarks, this is also a decomposition of $\Gamma$-modules: $\C[\g]^{\tilde G}$ is equal to the isotypic component of the trivial $\Gamma$-module, and $\C[\g]^{\tilde G}p$ is the isotypic component of the non-trivial simple $\Gamma$-module.
Since $\C p$ is the unique one dimensional subspace of $\C[\g]^{\tilde G} p$ of degree $n$ the proof is complete.
\end{proof}

\begin{Remark}
\label{R:Pfaffrem}
In this section we could have chosen $\tilde G$ to be the subgroup stabilising any non-degenerate symmetric bilinear form on $\C^{2n}$. 
Such forms are classified by invertible symmetric matrices, and if we chose one other than the identity matrix then our discussion of the Pfaffian would be slightly more complicated, see \cite[\textsection 7.7]{JaNO}.
By the remarks of \cite[\textsection 1.3]{JaNO} we see that Lemma~\ref{L:characterisePfaffian} holds true for any choice of form.
\end{Remark}

Let $e \in \Nc(\g)$ and embed it in a $\sl_2$ triple $(e,h,f)$.
Retain the notation $\Ss_e = e + \g^f$ for the Slodowy slice.

\begin{Lemma}
\label{L:identifypfaff}
Suppose that the short exact sequence $G \into \tilde G \onto \Gamma$ is split by a map $s \colon \Gamma \to \tilde G$ such that $s(\Gamma)$ preserves $\Ss_e$.
Then the restriction of the Pfaffian to $\Ss_e$ generates the unique one dimensional subspace $\C p|_{\Ss_e} \subseteq \Cas \C[\Ss_e]$ such that:
\begin{enumerate}
\item $\C p|_{\Ss_e}$ has Kazhdan degree $2n$;
\item $\C p|_{\Ss_e}$ is isomorphic to the unique non-trivial representation of $s(\Gamma)$.
\end{enumerate}
\end{Lemma}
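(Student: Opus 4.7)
The plan is to transport Lemma~\ref{L:characterisePfaffian} across the isomorphism $\C[\g]^G \simeq \Cas \C[\Ss_e]$ of Lemma~\ref{L:casimirlemma}, after comparing the two natural gradings and the two natural $\Gamma$-actions on either side.

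First I would verify the two claimed properties for $p|_{\Ss_e}$. From the Kazhdan cocharacter formula~\eqref{e:Kazhdanbycochar}, the Kazhdan action of $t \in \C^\times$ scales any $f \in \C[\g]^G$ by $t^{2d}$ when $f$ is homogeneous of standard degree $d$, because $\Ad(\nu(t))$ acts trivially on $G$-invariants. Since the Pfaffian $p$ has standard degree $n$ it has Kazhdan degree $2n$, and since the restriction map $\C[\g]^G \to \Cas \C[\Ss_e]$ is $\C^\times$-equivariant we obtain (1); nonvanishing is automatic from Lemma~\ref{L:casimirlemma}. For (2), restriction to $\Ss_e$ is $s(\Gamma)$-equivariant because by hypothesis $s(\Gamma)\subseteq \tilde G$ preserves $\Ss_e$, while on $\C[\g]^G$ the action of $s(\Gamma)$ factors through the quotient $\tilde G \to \Gamma$ used in Lemma~\ref{L:characterisePfaffian}; hence (2) transports directly.

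For uniqueness, suppose $V \subseteq \Cas \C[\Ss_e]$ is one-dimensional, of Kazhdan degree $2n$, and nontrivial as an $s(\Gamma)$-representation. By Lemma~\ref{L:casimirlemma} and the degree matching above, $V$ is the image of a unique one-dimensional homogeneous subspace $W \subseteq \C[\g]^G$ of standard degree $n$, on which $\Gamma$ acts nontrivially. Lemma~\ref{L:characterisePfaffian} forces $W = \C p$, and consequently $V = \C p|_{\Ss_e}$.

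I do not foresee a serious obstacle here; the only subtle point is the compatibility of gradings, which reduces to the observation that the Kazhdan grading on $G$-invariants is exactly twice the standard grading, so that the restriction map intertwines standard degree $n$ on $\C[\g]^G$ with Kazhdan degree $2n$ on $\Cas \C[\Ss_e]$.
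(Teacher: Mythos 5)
Your proposal is correct and follows the paper's own argument: the paper likewise notes that the Kazhdan degree of a $G$-invariant of standard degree $i$ is $2i$ and then invokes Lemma~\ref{L:characterisePfaffian} together with the isomorphism $\C[\g]^G \simeq \Cas\C[\Ss_e]$ of Lemma~\ref{L:casimirlemma}. Your write-up simply spells out the equivariance and uniqueness transport in more detail than the paper does.
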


\begin{proof}
Using \eqref{e:Kazhdanbycochar} we see that elements of $\C[\g]^G$ of total degree $i$ lie in Kazhdan degree $2i$. Now the result is a direct consequence of Lemma~\ref{L:characterisePfaffian}, along with the remarks of Section~\ref{ss:slodowyPoissondeform}.
\end{proof}

\subsection{A special Casimir on the orthogonal slice}
\label{s:Pffafianrestrcited}
In this section we fix $\g := \so_{2n}$ with $n\geq 3$ and an element $e \in \Nc(\g)$ with associated partition $\lambda = (\lambda_1, \lambda_2) \vdash 2n$ satisfying (ii) from Section \ref{ss:Poissonpresentations}.
We will study the slice $\Ss_e$ using the Poisson presentation of Theorem~\ref{T:PDyangian}, and we resume all the relevant notation from that section. In particular we let $s_{1,2} := \frac{\lambda_2 - \lambda_1}{2}$, so that $n = \lambda_1 + s_{1,2}$.

Introduce the notation
\begin{eqnarray}
\label{e:Cdefn}
z := \sum_{t = 0}^{\frac{\lambda_1 - 1}{2}} \eta_1^{(2t)} \theta^{(\lambda_1 - 2t + s_{1,2})} = \sum_{t = 0}^{\frac{\lambda_1 - 1}{2}} \eta_1^{(2t)} \theta^{(n - 2t)} \in \C[\Ss_e].
\end{eqnarray}
 
The goal of this section is to prove:
\begin{Proposition}
\label{P:Cincentre}
We have $z \in \Cas \C[\Ss_e]$.
\end{Proposition}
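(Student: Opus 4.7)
The plan is to verify directly that $z$ Poisson-commutes with every Poisson generator of $\C[\Ss_e]$ listed in \eqref{e:DYgens}; by the Leibniz rule this then yields $z \in \Cas\C[\Ss_e]$.

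First I would compute $\{z, \eta_i^{(2s)}\}$ for $i \in \{1,2\}$. By \eqref{e:dyrel1} the $\eta\eta$-contributions vanish, so
\[
\{z, \eta_i^{(2s)}\} \;=\; \sum_{t=0}^{(\lambda_1-1)/2} \eta_1^{(2t)}\, \{\theta^{(n-2t)}, \eta_i^{(2s)}\}.
\]
Substituting \eqref{e:dyrel2} into each inner bracket produces a double sum of cubic monomials of the shape $\eta_1^{(2t)} \eta_i^{(2u)} \theta^{(\ast)}$; swapping the two summation indices and invoking the truncation \eqref{e:dyrel5} I expect the sum to collapse to zero, with $i=1$ and $i=2$ treated in the same way up to the sign coming from the prefactor $\delta_{i,j}-\delta_{i,j+1}$ in \eqref{e:dyrel2}.

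The principal step is showing $\{z, \theta^{(s)}\} = 0$. Here $\{\eta_1^{(2t)}, \theta^{(s)}\}$ is handled by \eqref{e:dyrel2}, while $\{\theta^{(n-2t)}, \theta^{(s)}\}$ is governed by the more involved identity \eqref{e:dyrel34}, which contributes both a bilinear $\theta\theta$-term and a parity-gated $\eta_2 \teta_1$-term. After unfolding $\teta_1$ via the recursion \eqref{e:tetatwisteddefinition}, the whole expression becomes a sum of monomials in $\eta_1, \eta_2, \theta$ which I would collect by shape and show has vanishing net coefficient in each shape. The main obstacle is precisely this combinatorial bookkeeping: the interaction of the parity factor $\varpi_{r,s}$, the recursive expansion of $\teta_1$, and the truncation bound \eqref{e:dyrel5} produces a calculation that is mechanical in principle but intricate to organise; this is where the bulk of the work lies.

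As a structural consistency check, and as a guide to the calculation, I observe that $z$ is the coefficient of $u^{-n}$ in the formal product
\[
E(u)\, T(u), \qquad E(u) := 1 + \sum_{t \geq 1} \eta_1^{(2t)} u^{-2t}, \qquad T(u) := \sum_{r > s_{1,2}} \theta^{(r)} u^{-r};
\]
in particular $z$ has Kazhdan degree $2n$, which coincides with the Kazhdan degree of the restriction of the Pfaffian. This observation both aligns with the next result (Proposition~\ref{P:CisPfaffian}) identifying $z$ with the Pfaffian up to scalar, and suggests a conceptual interpretation of $z$ as the classical limit of a Sklyanin-type determinant in the twisted Yangian picture of \cite{Bw}, in which the centrality of such determinants is a standard fact.
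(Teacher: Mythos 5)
Your overall strategy (check that $z$ commutes with every Poisson generator in \eqref{e:DYgens}) is the right one, and your treatment of $\{z,\eta_i^{(2s)}\}$ matches the paper's: after substituting \eqref{e:dyrel2} and swapping the summation order, the inner sum over $t=0,\dots,\tfrac{\lambda_1-1}{2}$ is recognised as $\{\eta_1^{(\lambda_1+1)},\theta^{(\cdot)}\}$, which vanishes by the truncation \eqref{e:dyrel5}. However, the heart of the proposition is $\{z,\theta^{(s)}\}=0$, and here you have only described a plan (``collect by shape and show vanishing net coefficient''), not a proof. That is a genuine gap: the cancellation is not a routine bookkeeping exercise in the form you set it up, because for general $s$ the parity factor $\varpi_{n-2t,s}$ in \eqref{e:dyrel34} is nonzero whenever $s\not\equiv n \pmod 2$, so the $\eta_2^{(\cdot)}\teta_1^{(\cdot)}$ terms genuinely appear and you would have to prove an additional, separate cancellation after unfolding the recursion \eqref{e:tetatwisteddefinition}. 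You give no mechanism for this.

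The paper sidesteps this difficulty entirely with a reduction you are missing: since $\{\eta_1^{(2)},\theta^{(s)}\}=\theta^{(s+1)}$ for $s\ge s_{1,2}+1$, the Jacobi identity together with $\{z,\eta_1^{(2)}\}=0$ shows that it suffices to verify the single bracket $\{z,\theta^{(s_{1,2}+1)}\}=0$. For that one value, $n-\lambda_1+1$ and $n-2t$ have the same parity (as $\lambda_1$ is odd), so $\varpi$ vanishes identically, the twisted generators $\teta_1^{(2r)}$ never enter, and the remaining identity reduces to matching two quadratic $\theta\theta$-sums, which is done by an explicit reindexing. Your closing observation that $z$ should be the classical limit of a Sklyanin-type determinant is a plausible conceptual explanation, but as stated it is a heuristic, not an argument; to make it a proof you would need to exhibit a central element of the (graded) twisted Yangian mapping onto $z$ under the surjection of \cite{Bw} and verify the identification, none of which is carried out. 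As written, the proposal establishes the easy half of the proposition and leaves the essential half unproven.
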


\begin{proof}
We proceed by direct computation, showing that Poisson brackets between $z$ and the generators \eqref{e:DYgens} vanish. For all integers $r \geq 0$, we have:

\begin{eqnarray*}
\big\{ \eta_1^{(2r)}, z\big\} &=& \sum_{t = 0}^{\frac{\lambda_1 - 1}{2}} \big\{ \eta_1^{(2r)},  \eta_1^{(2t)} \theta^{(n- 2t)} \big\} = \sum_{t = 0}^{\frac{\lambda_1 - 1}{2}} \eta_1^{(2t)} \big\{ \eta_1^{(2r)},   \theta^{(n - 2t)} \big\} =\\
&=& \sum_{t = 0}^{\frac{\lambda_1 - 1}{2}} \eta_1^{(2t)} \sum_{k = 0}^{r - 1}  \eta_1^{(2k)}   \theta^{(2r + n - 2t -2k)} =
 \sum_{k = 0}^{r - 1} \eta_1^{(2k)}   \sum_{t = 0}^{\frac{\lambda_1 - 1}{2}}    \eta_1^{(2t)} \theta^{(2r + n - 2t  -2k)} =\\
&=& \sum_{k = 0}^{r - 1} \eta_1^{(2k)}   \big\{  \eta_1^{(\lambda_1 + 1)}, \theta^{(n+ 2r -2k)} \big\} = 0,
\end{eqnarray*}
where we use $\eta_1^{(\lambda_1 + 1)} = 0$ by \eqref{e:dyrel5} in the final line of the calculation. We now show that $\big\{ \eta_2^{(2r)}, z\big\}  = 0$ for all $r\ge 0$, in a similar fashion:
\begin{eqnarray*}
\big\{ \eta_2^{(2r)}, z\big\} &=& \sum_{t = 0}^{\frac{\lambda_1 - 1}{2}} \big\{ \eta_2^{(2r)},  \eta_1^{(2t)} \theta^{(n - 2t)} \big\} =
 -\sum_{t = 0}^{\frac{\lambda_1 - 1}{2}} \eta_1^{(2t)} \big\{ \eta_2^{(2r)},   \theta^{(n - 2t)} \big\} =\\
 &=& -\sum_{k = 0}^{r - 1} \eta_2^{(2k)}   \big\{  \eta_1^{(\lambda_1 + 1)}, \theta^{(n +2r -2k)} \big\} = 0.
\end{eqnarray*}

Now we observe that
$\{\eta_1^{(2)}, \theta^{(s)}\} = \theta^{(s+1)}$ for all $s \geq s_{1,2} + 1$. 
Using the Jacobi identity and a recursive argument, we deduce that $\{z, \theta^{(s)}\} = 0$ for all $s \geq s_{1,2} + 1$ provided $\{ z, \theta^{(s_{1,2} + 1)} \} = 0$.
Using $s_{1,2} = n - \lambda_1$, it remains to show that  $\big\{ z, \theta^{(n - \lambda_1 + 1)} \big\} = 0$.
\begin{eqnarray*}
\left\{  z, \theta^{(n - \lambda_1 + 1)}\right\} &=& \sum_{t = 0}^{\frac{\lambda_1 - 1}{2}} \left\{   \eta_1^{(2t)} \theta^{(n- 2t)} , \theta^{(n - \lambda_1 + 1)}\right\}  \\
&=& \sum_{t = 0}^{\frac{\lambda_1 - 1}{2}} \left\{  \eta_1^{(2t)},  \theta^{(n - \lambda_1 + 1)}  \right\} \theta^{(n -2t)} + \sum_{t = 0}^{\frac{\lambda_1 - 1}{2}}  \eta_1^{(2t)}  \left\{   \theta^{(n-2t)}, \theta^{(n - \lambda_1 + 1)} \right\} \\
&= & \sum_{t = 1}^{\frac{\lambda_1 - 1}{2}} \left\{  \eta_1^{(2t)},  \theta^{(n - \lambda_1 + 1)}  \right\} \theta^{(n -2t)} - \sum_{t = 0}^{\frac{\lambda_1 - 1}{2}}  \eta_1^{(2t)}  \left\{ \theta^{(n - \lambda_1 + 1)},   \theta^{(n-2t)} \right\}\\
&=& \sum_{t = 1}^{\frac{\lambda_1 - 1}{2}} \sum_{k = 0}^{t-1}  \eta_1^{(2k)}  \theta^{(2t +n - \lambda_1 -2k)}  \theta^{(n - 2t )}
- \sum_{t = 0}^{\frac{\lambda_1 - 1}{2}}  \eta_1^{(2t)}  \left\{ \theta^{(n - \lambda_1 + 1)},   \theta^{(n-2t)} \right\} \\
&=& \sum_{t = 1}^{\frac{\lambda_1 - 1}{2}} \sum_{k = 0}^{t-1}  \eta_1^{(2k)}  \theta^{(2t +n - \lambda_1 -2k)}  \theta^{(n - 2t )}
- \sum_{t = 0}^{\frac{\lambda_1 - 3}{2}}  \eta_1^{(2t)}  \left\{ \theta^{(n - \lambda_1 + 1)},   \theta^{(n-2t)} \right\} \\
&=& \sum_{k = 0}^{\frac{\lambda_1 - 3}{2}}  \eta_1^{(2k)}   \sum_{t = k+1}^{\frac{\lambda_1 - 1}{2}}   \theta^{(2t +n - \lambda_1 -2k)}  \theta^{(n - 2t )}
- \sum_{t = 0}^{\frac{\lambda_1 - 3}{2}}  \eta_1^{(2t)}  \left\{ \theta^{(n - \lambda_1 + 1)},   \theta^{(n-2t)} \right\}.
\end{eqnarray*}

It is therefore enough to prove that, for fixed $0 \leq k \leq \frac{\lambda_1 - 3}{2}$, 
\begin{equation}\label{eq_toprove}
\sum_{j = k+1}^{\frac{\lambda_1 - 1}{2}}   \theta^{(2j +n - \lambda_1 -2k)}  \theta^{(n - 2j)} = \left\{ \theta^{(n - \lambda_1 + 1)},   \theta^{(n-2k)} \right\}
\end{equation}
Since $n - \lambda_1 + 1$ and $ n - 2k$ have the same parity, relation \eqref{e:dyrel34} implies that the right hand side of \eqref{eq_toprove} equals:
$$ 
\dfrac{1}{2}  \sum_{i = n - \lambda_1 + 1}^{n- 2k -1} \theta^{(i)} \theta^{( \lambda_2 -2k  -i)}=   \theta^{(n-\lambda_1 + 1)} \theta^{( n -2k  -1)} + \cdots + \theta^{(\frac{\lambda_2 - 1 -2k}{2})}\theta^{(\frac{\lambda_2 +1 -2k}{2})}
= \sum_{t = 1}^{\frac{\lambda_1 - 1 -2k}{2}} \theta^{(n - \lambda_1 + t)} \theta^{(  n -2k -t)}.$$


For $k = 0, \dots, \frac{\lambda_1 - 3}{2}$ we define the  index sets for the two summations
$$I_L = \left\{j \in \Z \mid k+1 \leq j \leq \dfrac{\lambda_1 - 1}{2} \right\}, \quad
 I_R = \left\{t \in \Z \mid 1 \leq t \leq \frac{\lambda_1 - 1 - 2k}{2} \right\}.$$


\textbf{Case 1.} If  $\frac{\lambda_1 - 1 -2k}{2}$ is odd, there exists $\bar j \in I_L$ such that $m - 2 \bar j = \frac{\lambda_2 - 1 -2k}{2}$, namely $\bar j =     \frac{\lambda_1 + 1 +2k}{4}$.
%

Then:
 $$\sum_{j \in I_L} \theta^{(2j +n - \lambda_1 -2k)}  \theta^{(n - 2j)} = \sum_{j = k+1}^{\bar j -1}   \theta^{(2j +n - \lambda_1 -2k)}  \theta^{(n - 2j)} + \sum_{j = \bar j}^{\frac{\lambda_1 - 1}{2}}   \theta^{(2j +n - \lambda_1 -2k)}  \theta^{(n - 2j)}.$$

By comparing pieces, one sees that:
$$\sum_{j = k+1}^{\bar j -1}   \theta^{(2j +n - \lambda_1 -2k)}  \theta^{(n - 2j)} = 
\sum_{2 \Z \cap I_R} \theta^{(n - \lambda_1 + t)} \theta^{(  n -2k -t)}$$
and since $\C[\Ss_e]$ is commutative
$$\sum_{j = \bar j}^{\frac{\lambda_1 - 1}{2}}   \theta^{(2j +n - \lambda_1 -2k)}  \theta^{(n - 2j)} = 
\sum_{(2 \Z +1) \cap I_R} \theta^{(n - \lambda_1 + t)} \theta^{(n -2k -t)},$$
this concludes the proof in this case.


\textbf{Case 2.} If $\frac{\lambda_1 - 1 -2k}{2}$ is even, there exists
 $\bar j \in I_L$ such that 
$2 \bar j + n - \lambda_1 -2k = \frac{\lambda_2 - 1 -2k}{2}$, namely 
$\bar j =  \frac{\lambda_1 -1 +2k}{4}$.
Then the proof mimics the one in the previous case: we observe that
$$\sum_{j = k+1}^{\bar j }   \theta^{(2j +n - \lambda_1 -2k)}  \theta_{(n - 2j)} = 
\sum_{2 \Z \cap I_R} \theta^{(n - \lambda_1 + t)} \theta^{( n -2k -t)}$$
and applying commutativity in $\C[\Ss_e]$ we have
$$\sum_{j = \bar j + 1}^{\frac{\lambda_1 - 1}{2}}   \theta^{(2j +n - \lambda_1 -2k)}  \theta^{(n - 2j)} = 
\sum_{(2 \Z +1) \cap I_R} \theta^{(n - \lambda_1 + t)} \theta^{( n -2k -t)}.$$
\end{proof}


Now since $z$ is a Casimir, the Poisson ideal it generates coincides with the ideal which it generates when viewing $\C[\Ss_e]$ as a commutative algebra.
By Corollary~\ref{C:Poissonsurj}, and using the same notation we initiated there, we have:
\begin{Corollary}
\label{C:kernelcor}
The kernel of the homomorphism $\varphi \colon \C[\Ss_e] \to \C[\Ss_{e_0}]$ defined in \eqref{eq_surj} is generated by the single element $z$.$\hfill\qed$
\end{Corollary}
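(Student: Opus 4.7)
The plan is to deduce Corollary \ref{C:kernelcor} directly from the two preceding results in the section, namely Corollary \ref{C:Poissonsurj} and Proposition \ref{P:Cincentre}. There are essentially two bookkeeping steps, both short.

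First I would identify the element referenced in Corollary \ref{C:Poissonsurj} with the Casimir $z$ defined in \eqref{e:Cdefn}. By that corollary, $\ker\varphi$ is the Poisson ideal of $\C[\Ss_e]$ generated by the symplectic relation \eqref{e:symplecticdyrel}, applied to the presentation of $\C[\Ss_{e_0}]$. Since $e_0$ has partition $(\lambda_1-1,\lambda_2-1)$ with both parts even, and since $s_{1,2} = (\lambda_2-\lambda_1)/2$ is unchanged, the relation \eqref{e:symplecticdyrel} in $\C[\Ss_{e_0}]$ reads
\[
\sum_{t=0}^{(\lambda_1-1)/2} \eta_1^{(2t)} \theta^{((\lambda_1-1)-2t+s_{1,2}+1)} = \sum_{t=0}^{(\lambda_1-1)/2} \eta_1^{(2t)} \theta^{(n-2t)},
\]
where I used $\lambda_1 + s_{1,2} = n$. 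Comparing with \eqref{e:Cdefn} shows that the lift of this relation to $\C[\Ss_e]$ is exactly $z$. So $\ker\varphi$ is the Poisson ideal generated by $z$.

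Second I would upgrade ``Poisson ideal'' to ``commutative ideal''. By Proposition \ref{P:Cincentre}, $z$ lies in $\Cas \C[\Ss_e]$, so for any $f, g \in \C[\Ss_e]$ the Leibniz rule yields
\[
\{f, gz\} = \{f,g\}\,z + g\,\{f,z\} = \{f,g\}\,z \in (z).
\]
Hence the principal commutative ideal $(z)$ is already closed under the Poisson bracket with arbitrary elements of $\C[\Ss_e]$, so it coincides with the Poisson ideal generated by $z$. Combined with the previous step, this gives $\ker\varphi = (z)$, as required.

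There is no real obstacle here; the argument is a routine combination of the prior results. The only care needed is in Step~1, to match the indices in the symplectic relation \eqref{e:symplecticdyrel}, applied with the shifted partition $(\lambda_1-1,\lambda_2-1)$, against the summation defining $z$ in \eqref{e:Cdefn}.
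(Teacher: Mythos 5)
Your proposal is correct and is essentially identical to the paper's own (implicit) justification: the paper states the corollary with a \qed precisely because it follows from Corollary~\ref{C:Poissonsurj} after identifying the lifted relation \eqref{e:symplecticdyrel} for the partition $(\lambda_1-1,\lambda_2-1)$ with $z$, and from Proposition~\ref{P:Cincentre} via the observation that the Poisson ideal generated by a Casimir equals the commutative ideal it generates. Your index bookkeeping using $\lambda_1 + s_{1,2} = n$ and the invariance of $s_{1,2}$ under the shift is exactly the required check.
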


\begin{Remark}
\label{R:regularremark}
Note that the results of this section  are valid also when $\lambda_1 = 1$: Theorem~\ref{T:PDyangian} holds for $n \in \{1, 2\}$ however the relations are significantly simpler in the case $n = 1$.
\end{Remark}

\subsection{The $\Gamma$-action on the orthogonal slice}
\label{ss:Gammaonslice}
Throughout this section, we fix $\g := \so_{2n}, n\geq 3$ and we let $e \in \Nc(\g)$ with a two-block partition satisfying (ii) from Section \ref{ss:Poissonpresentations} and $z$ as in \eqref{e:Cdefn}.
Our goal is to examine the action of the outer automorphism group of $\g$ on $z$, with respect to a carefully chosen splitting of the surjection $\OO_{2n} \onto \OO_{2n} /\SO_{2n}$.
The set up which we use to make the necessary calculations is  explained in \cite[Part I, \textsection 4]{To}. 
For ease of reference we recap some of the main facts here.

We begin by introducing a choice of embedding $\g \subseteq \gl_{2n}$ and a nilpotent element $e \in \g$, a block diagonal nilpotent matrix with Jordan blocks of sizes $\lambda_1, \lambda_2$, with $1$ or $0$ on the super-diagonal and zeroes elsewhere.
This is described in detail in \cite[\textsection 4.2]{To}.

The natural representation $\C^{2n}$ of $\g$ has basis $\{ b_{i,j} \mid 1\le i \le 2, \ 1\le j \le \lambda_i\}$.
 The general linear Lie algebra $\gl_{2n}$ has a basis 
\begin{equation}
\label{e:glbasis}
\{e_{i,j;k,l} \mid 1\le i,k \le 2, \ 1\le j \le \lambda_i, \ 1 \le l \le \lambda_k\}
\end{equation}
where $e_{i,j; k,l} b_{r,s} = \delta_{k,r} \delta_{l, s} b_{i,j}$,
The relations in $\gl_{2n}$ are given by the following formula
\begin{eqnarray}
& [e_{i_1,j_1;k_1,l_1}, e_{i_2,j_2;k_2,l_2}] = \delta_{k_1, i_2} \delta_{l_1, j_2} e_{i_1, j_1; k_2, l_2} - \delta_{k_2, i_1}\delta_{l_2, j_1} e_{i_2, j_2; k_1, l_1}
\end{eqnarray}
and $\gl_{2n}$ admits an involution $\tau : \gl_{2n} \to \gl_{2n}$ determined by 
\begin{eqnarray}
\label{e:taudefn}
\tau(e_{i,j;k,l}) = (-1)^{j - l - 1}  e_{k, \lambda_k + 1 - l; i, \lambda_i + 1 - j}.
\end{eqnarray}
The subalgebra $\g := \gl_{2n}^\tau$ is isomorphic to $\so_{2n}$, see \cite[(4.13)]{To}.

The nilpotent element
\begin{eqnarray}
\label{e:edefinition}
e := \sum_{i=1}^2 \sum_{j = 1}^{\lambda_i-1} e_{i,j;i,j + 1}
\end{eqnarray}
has two Jordan blocks of sizes $\lambda_1$ and $\lambda_2$. Furthermore $e$ forms part of an $\sl_2$-triple $\{e,h,f\} \subseteq \g$ and so $\tau$ fixes each element of this triple.

By the classification of $\sl_2$-representations, the operator $\ad(h)$ defines  Dynkin gradings $\g = \bigoplus_{i\in \Z} \g(i)$ and $\gl_{2n} = \bigoplus_{i\in \Z} \gl_{2n}(i)$. To be precise we have 
\begin{eqnarray}
\label{e:gooddegrees}
\deg(e_{i,j;k,l}) = 2(l-j) + \lambda_i - \lambda_k.
\end{eqnarray}
by \cite[(4.5)]{To}. Since $\lambda_1 - \lambda_2$ is even, it follows that the Dynkin grading is even, i.e. $\g(i) = \gl_{2n}(i) = 0$ for $i \notin 2\Z$.

We now introduce a splitting of $\OO_{2n} \onto \OO_{2n}/\SO_{2n}$.
Let $\gamma \in \GL_{2n}$ be the element defined by
\begin{eqnarray*}
\gamma(b_{i,j}) := \left\{ \begin{array}{cl} -b_{i,j} & \text{ if } i =1; \\ b_{i,j} & \text{ if } i=2.\end{array}\right. 
\end{eqnarray*}

\begin{Lemma}
Retain notation from the current section. Then:
\label{L:gammalemma}
\begin{enumerate}
\setlength{\itemsep}{4pt}
\item $\det(\gamma) = -1$ and $\gamma^2$ is the identity.
\item For all $i,j,k,l$ as per \eqref{e:glbasis} we have $$\Ad(\gamma) (e_{i,j;k,l}) = \left\{ \begin{array}{cl} -e_{i,j;k,l} & \text{ if } i \ne k; \\ e_{i,j;k,l} & \text{ if } i=k.\end{array}\right. $$
\item $\gamma$ lies in the orthogonal group $\OO_{2n}\subseteq \GL_{2n}$ satisfying $\Lie(\OO_{2n}) = \g$.
\item If $\widetilde \Gamma \subseteq \OO_{2n}$ denotes the subgroup generated by $\gamma$ then the map $\widetilde \Gamma \to \OO_{2n}$ splits the surjection $\OO_{2n} \onto \Gamma := \OO_{2n} / \SO_{2n}$. Furthermore $\widetilde \Gamma$ fixes $e,h$ and $f$.
\end{enumerate}
\end{Lemma}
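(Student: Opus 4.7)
The plan is to treat parts (1) and (2) together by observing that $\gamma$ is diagonal in the basis $\{b_{i,j}\}$, then use this to derive (3) via the bilinear form, and finally deduce (4) from the preceding parts plus the explicit form of the $\sl_2$-triple.

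For (1) and (2), set $\epsilon_1 := -1$ and $\epsilon_2 := 1$, so that $\gamma(b_{i,j}) = \epsilon_i b_{i,j}$. The matrix of $\gamma$ is diagonal with eigenvalue $\epsilon_1 = -1$ of multiplicity $\lambda_1$ and eigenvalue $\epsilon_2 = 1$ of multiplicity $\lambda_2$, so $\det(\gamma) = (-1)^{\lambda_1}$ and $\gamma^2 = \id$. Under condition (ii) the block size $\lambda_1$ is odd, giving $\det(\gamma) = -1$. For (2), a short direct computation gives
\[
\Ad(\gamma)(e_{i,j;k,l})(b_{r,s}) = \gamma\, e_{i,j;k,l}(\epsilon_r b_{r,s}) = \epsilon_r \epsilon_i \delta_{k,r}\delta_{l,s}\,b_{i,j},
\]
and the Kronecker symbol forces $\epsilon_r = \epsilon_k$, so $\Ad(\gamma)(e_{i,j;k,l}) = \epsilon_i\epsilon_k\, e_{i,j;k,l}$, which agrees with the claimed sign rule.

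For (3), I would identify the bilinear form $J$ defining $\OO_{2n}$ from the involution $\tau$. By construction $\tau(X) = -J X^\top J^{-1}$, and inspecting \eqref{e:taudefn} one sees that $J$ only pairs $b_{i,j}$ with basis vectors $b_{i,\lambda_i+1-j}$ in the \emph{same} Jordan block: the matrix of $J$ is block-diagonal with respect to the partition into the two blocks indexed by $i=1,2$. Since $\gamma$ is diagonal in the $\{b_{i,j}\}$ basis (so $\gamma^\top = \gamma$) and acts as the scalar $\epsilon_i$ on block $i$, each diagonal block of $\gamma^\top J \gamma$ receives a factor $\epsilon_i^2 = 1$. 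Hence $\gamma^\top J \gamma = J$, so $\gamma \in \OO_{2n}$.

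For (4), part (1) gives $|\widetilde\Gamma| = 2$ and part (3) places $\widetilde\Gamma$ inside $\OO_{2n}$; since $\det(\gamma) = -1$, the element $\gamma$ is not in $\SO_{2n}$, so the restriction of the quotient map $\OO_{2n} \twoheadrightarrow \Gamma$ to $\widetilde\Gamma$ is an isomorphism of groups, yielding the required splitting. For the fixed-point assertion, by (2) every basis element $e_{i,j;i,l}$ with matching outer indices $i=k$ is fixed by $\Ad(\gamma)$. The expression \eqref{e:edefinition} is a sum of such elements, so $\Ad(\gamma)(e) = e$; the standard $\sl_2$-triple $(e,h,f)$ inside $\g$ has $h$ and $f$ expressible as linear combinations of $\{e_{i,j;i,j}\}$ and $\{e_{i,j+1;i,j}\}$ respectively (all within a single Jordan block), so $\Ad(\gamma)$ fixes $h$ and $f$ as well.

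The only subtle point is the identification in (3): one must check that the block-diagonal structure of $J$ really follows from the formula for $\tau$, rather than treating it as obvious. The cleanest alternative, which I would include as a sanity check, is to verify directly that $\Ad(\gamma)\circ\tau = \tau\circ\Ad(\gamma)$ on the basis \eqref{e:glbasis} using (2) and \eqref{e:taudefn} — both sides produce the coefficient $\epsilon_i\epsilon_k(-1)^{j-l-1}$ on $e_{k,\lambda_k+1-l;i,\lambda_i+1-j}$ — which at least confirms that $\Ad(\gamma)$ stabilises $\g = \gl_{2n}^\tau$, consistent with $\gamma$ lying in $\OO_{2n}$.
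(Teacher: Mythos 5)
Your proposal is correct, and on part (3) it takes a genuinely different (and somewhat more careful) route than the paper. The paper argues that $\Ad(\gamma)$ commutes with $\tau$, hence preserves $\g = \gl_{2n}^\tau$, and then invokes the claim that $\OO_{2n}$ is precisely the subgroup of $\GL_{2n}$ preserving $\g$ under $\Ad$; you instead identify the Gram matrix $J$ of the form from \eqref{e:taudefn}, observe it is block-diagonal with respect to the two Jordan blocks, and verify $\gamma^\top J\gamma = J$ directly. Your route is arguably safer: the normaliser of $\so_{2n}$ in $\GL_{2n}$ under $\Ad$ is the full similitude group $\OO_{2n}\cdot\C^\times$, so preserving $\g$ only pins $\gamma$ down up to a scalar, and your direct check of the form closes exactly that gap (your ``sanity check'' that $\Ad(\gamma)$ commutes with $\tau$ is then the paper's whole argument). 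The other difference is in the last claim of (4): the paper deduces $\Ad(\gamma)f=f$ from $\Ad(\gamma)e=e$, $\Ad(\gamma)h=h$ and the uniqueness of the completion $f$ of $(e,h)$ to an $\sl_2$-triple (\cite[Lemma~3.4.4]{CM}), whereas you assert that $f$ is an explicit block-diagonal combination of the $e_{i,j+1;i,j}$; that assertion is true, but to know that the triple fixed in the text is this standard one you implicitly need the same uniqueness statement, so the paper's phrasing is marginally more economical there. Parts (1) and (2) are handled the same way in both (the paper simply says they follow from the definitions; your explicit computation, including the observation that $\det(\gamma)=(-1)^{\lambda_1}=-1$ because $\lambda_1$ is odd under condition (ii), is exactly what is meant).
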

\begin{proof}
Part (1) and (2) follow directly from the definitions. 

Comparing (2) with formula \eqref{e:taudefn} we see that $\Ad(\gamma)$ commutes with $\tau$, hence $\Ad(\gamma)$ preserves $\g$. Now (3) follows from the fact that $\OO_{2n} \subseteq \GL_{2n}$ is precisely the subgroup of $\GL_{2n}$ preserving $\g$ under $\Ad$.

As we noted earlier $\Gamma$ is a cyclic group of order two generated by the coset of any element of $\OO_{2n}$ of determinant $-1$, and so the first part of (4) follows from (1) and (3).

Examining \eqref{e:edefinition} and \cite[(4.5)]{To} we see that $\Ad(\gamma)$ fixes both $e$ and $h$, and by \cite[Lemma~3.4.4]{CM} we see that $\Ad(\gamma)$ also fixes $f$.
\end{proof}

\begin{Remark} \label{rk_orc}
If $K$ is a  simple  algebraic group with $\mathfrak{k} = \Lie (K)$
 and $e \in \Nc(\mathfrak{k})$ with an attached $\sl_2$-triple $(e,h,f)$, Slodowy defines the \emph{outer reductive centralizer} $\CA(e)$ of $e$ to be the elements of $\Aut(\mathfrak{k})$ fixing the triple pointwise (cf. \cite[\S 7.6]{Slo}). 
This is a possibly disconnected reductive group which acts via graded Poisson automorphisms on the Slodowy slice $e + \mathfrak{k}^f$, and its nilpotent part as well, see the argument for \cite[Lemma 4.1]{ACET}.
If $e$ belongs to a characteristic nilpotent orbit, then the first part of the proof of \cite[\S 7.6, Lemma 2]{Slo} carries over to this more general setting and yields the short exact sequence
$1 \to C_K(e,h,f) \to \CA(e) \to \Out(\mathfrak{k}) \to 1$, where $C_K(e,h,f)$ is the pointwise stabiliser in $K$ of the triple and $\Out(\mathfrak{k})$ denotes the outer automorphism group of $\mathfrak{k}$.

For $\mathfrak{k} = \so_{2n}$ and $e$ as in the current section, one can show that $\widetilde{\Gamma}$ is isomorphic to a subgroup of $\CA(e)$ splitting the map $\CA(e) \to \Out(\mathfrak{k})$.
\end{Remark}

There is a parabolic Lie algebra $\p =\bigoplus_{i \ge 0} \g(i)$ constructed from the Dynkin grading.
Denote the nilradical of the opposite parabolic algebra by $\n = \bigoplus_{i<0} \g(i)$.
We recall some notation and facts from Sections~\ref{ss:slodowyPoissondeform} and \ref{ss:finiteW}: let $\chi = \kappa(e) \in \g^*$ and write $\n_\chi := \{x - \chi(x) \mid x\in \n\}\subseteq S(\g) \cong \C[\g^*]$.
 We introduced the Kazhdan grading on $\C[\g^*]$, defined by placing $\g(i)$ in degree $i + 2$. 

The decomposition $S(\g) = S(\p) \oplus S(\g) \n_\chi$ (see \cite[Formula (8.4)]{BKshift}) gives rise to a projection operator $S(\g) \onto S(\p)$ which is $\Ad(\gamma)$-equivariant and Kazhdan graded, and this leads to a twisted $\n$-action on $S(\p)$ given by the composition
$$S(\p) \overset{\ad(x)}\longrightarrow S(\g) \to S(\p)$$
for $x\in \n$. 
We denote the invariants with respect to this action by $S(\p)^{\tw(\n)}$. 
It follows from \cite[Theorem~4.1]{GG}, and is recapped with more detail in \cite[\textsection 1]{BKshift}, that we have the following isomorphism
\begin{eqnarray}
\label{e:SandC}
S(\p)^{\tw(\n)} \cong \C[\Ss_e].
\end{eqnarray}
This depends entirely on the fact that the Dynkin grading is even. It is important to note that \eqref{e:SandC} respects the Kazhdan gradings, and is equivariant with respect to the group of automorphisms of $\g$ which fix the $\sl_2$-triple $(e,h,f)$.

These same remarks can all be carried out for the general linear Lie algebra: there is a parabolic subalgebra $\op = \gl_{2n}(\ge\! 0)$ and the opposite nilradical $\on = \gl_{2n}(<\! 0)$ admits a twisted action on $S(\op)$, such that
$$\C[e + \gl_{2n}^f] \cong S(\op)^{\tw(\on)}$$
as Kazhdan graded Poisson algebras. The involution $\tau$ defined above extends to an involution on $S(\op)$ which preserves the $\tw(\on)$-invariants. For concision we write $S := S(\op)^{\tw \on}$.

\begin{Lemma} \cite[(2.2) \& Theorem~2.6]{To}
\label{L:optop}
Let $\op_- \subseteq \op$ denote the subspace spanned by $x\in \op$ such that $\tau(x) = -x$. 
Consider the $\Ad(\gamma)$-equivariant surjective algebra homomorphism $S(\op) \to S(\p) = S(\op) / (\op_-)$. This restricts to a surjective Poisson algebra homomorphism $S^\tau \onto S(\p)^{\tw(\n)}.$
\end{Lemma}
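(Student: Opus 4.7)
\emph{Plan.} I would verify three things in sequence: that $\pi \colon S(\op) \onto S(\p)$ is $\Ad(\gamma)$-equivariant; that the restriction of $\pi$ to $S(\op)^\tau$ is a Poisson map into $S(\p)$; and that $\pi$ sends $S^\tau$ onto $S(\p)^{\tw(\n)}$.

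\emph{Equivariance.} By Lemma~\ref{L:gammalemma}(3), $\gamma \in \OO_{2n}$, so $\Ad(\gamma)$ preserves $\g = \gl_{2n}^\tau$ and therefore commutes with $\tau$. Hence $\Ad(\gamma)$ preserves the $\tau$-eigenspace decomposition $\op = \p \oplus \op_-$, stabilises the ideal $(\op_-) \subseteq S(\op)$, and so $\pi$ descends equivariantly.

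\emph{Poisson property on $\tau$-invariants.} Using PBW, write $S(\op) \cong S(\p) \otimes S(\op_-)$ graded by $\op_-$-degree, and note that $S(\op)^\tau$ is the subalgebra of polynomials whose monomials have even $\op_-$-degree. The symmetric-pair relations $[\p, \op_-] \subseteq \op_-$ and $[\op_-, \op_-] \subseteq \p$, which follow from $\tau$ being a Lie algebra involution with $\pm 1$-eigenspaces $\p$ and $\op_-$, imply that a single Poisson bracket in $S(\op)$ changes $\op_-$-degree by $0$ or $-2$. A short bookkeeping check on the possible $\op_-$-degrees of bracket contributions then shows that for $f, g \in S(\op)^\tau$, the $\op_-$-degree zero component of $\{f, g\}$ in $S(\op)$ equals $\{\pi(f), \pi(g)\}$ computed in $S(\p)$; in particular the potentially troublesome degree-$(-2)$ terms arising from $[\op_-, \op_-] \subseteq \p$ require both operands to have positive $\op_-$-degree and so cannot meet an $\op_-$-degree zero monomial, ruling out cross-terms. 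Hence $\pi|_{S(\op)^\tau}$ is Poisson.

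\emph{Surjectivity onto $S(\p)^{\tw(\n)}$.} Here I would pass through the Gan--Ginzburg geometric picture. Since $\tau$ fixes the $\sl_2$-triple $(e,h,f)$ pointwise, it preserves $\gl_{2n}^f$ with fixed-point subspace $\g^f$, and therefore preserves $e + \gl_{2n}^f$ with fixed-point subvariety $(e + \gl_{2n}^f)^\tau = e + \g^f = \Ss_e$. Under the Kazhdan graded Poisson isomorphisms $S \cong \C[e + \gl_{2n}^f]$ and $S(\p)^{\tw(\n)} \cong \C[\Ss_e]$, both manifestly $\tau$-equivariant, the map $\pi$ should correspond to restriction of functions along the closed embedding $\Ss_e \hookrightarrow e + \gl_{2n}^f$. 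Granted this, the restricted map $\C[e + \gl_{2n}^f]^\tau \to \C[\Ss_e]$ is surjective by averaging: for any $F \in \C[e + \gl_{2n}^f]$ the element $\tfrac{1}{2}(F + \tau \cdot F)$ is a $\tau$-invariant preimage of $F|_{\Ss_e}$.

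\emph{Main obstacle.} The crux is checking that under the Gan--Ginzburg presentations, $\pi$ really does become geometric restriction. I expect this can be handled either by picking a $\tau$-adapted PBW basis for $\op$, so that each basis element is $\tau$-fixed or $\tau$-antifixed, and tracking the Kostant-type slice coordinates through $\pi$; or more conceptually by a functoriality statement for the Hamiltonian reduction construction $(\g, \p, \n, \chi) \mapsto S(\p)^{\tw(\n)}$ with respect to $\tau$-equivariant inclusions of Lie algebras compatible with the parabolic, nilpotent-radical, and character data. Once this compatibility is secured, surjectivity and the Poisson statement both follow formally.
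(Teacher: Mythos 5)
The paper does not actually prove this lemma: it is imported wholesale from \cite[(2.2) \& Theorem~2.6]{To}, so there is no internal argument to compare against, and your attempt at a self-contained proof is going beyond what the paper does. Judged on its own terms, your first two steps are sound. The equivariance is fine, though the inference ``$\Ad(\gamma)$ preserves $\g$, therefore commutes with $\tau$'' is stated backwards: preserving the fixed subalgebra does not by itself give commutation (you also need that $\Ad(\gamma)$ preserves the trace form, hence the $(-1)$-eigenspace $\g^\perp$); the paper's Lemma~\ref{L:gammalemma} instead checks commutation directly on the basis \eqref{e:glbasis}. Your $\op_-$-degree bookkeeping for the Poisson property is correct, and it does address the right brackets: since $\op \subseteq \gl_{2n}$ and $\p \subseteq \g$ are Lie subalgebras, the reduced Gan--Ginzburg brackets on $S(\op)^{\tw(\on)}$ and $S(\p)^{\tw(\n)}$ agree with the restrictions of the ambient Lie--Poisson brackets, which is what your computation with $[\p,\op_-]\subseteq \op_-$ and $[\op_-,\op_-]\subseteq\p$ uses.

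The genuine gap is your third step, and it carries two of the three assertions of the lemma: that the quotient map sends $S^\tau$ \emph{into} $S(\p)^{\tw(\n)}$ at all (i.e.\ intertwines the two twisted actions), and that the image is all of $S(\p)^{\tw(\n)}$. Both are made to rest on the claim that, under the two Gan--Ginzburg isomorphisms, your $\pi$ becomes restriction of functions along $\Ss_e \hookrightarrow e + \gl_{2n}^f$ --- a claim you explicitly label as ``expected'' and do not verify. Making it precise requires identifying $\g^*$ with the annihilator of the $(-1)$-eigenspace of $\tau$ inside $\gl_{2n}^*$ via the trace form, checking that the subvariety of $\kappa(e)+\on^\perp$ cut out by $\op_-$ is exactly the affine subspace $\kappa(e)+\n^\perp$ entering the reduction for $\g$, and checking that the two projections defining the twisted actions are compatible. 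None of this is deep, but it is precisely the content of the cited \cite[Theorem~2.6]{To}, and without it what you have is a plan rather than a proof. The concluding averaging argument for surjectivity is fine once that identification is secured (and you correctly use that $(e+\gl_{2n}^f)^\tau = e + \g^f$ because $\tau$ fixes the whole $\sl_2$-triple).
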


Now we describe elements of $S^\tau$ which map to the generators $\eta_i^{(2r)}$ and $\theta^{(r)}$ of $S(\p)^{\tw(\n)}$ described in Theorem~\ref{T:PDyangian}, under the homomorphism appearing in Lemma~\ref{L:optop}. The following elements were first introduced in the enveloping algebra $U(\op)$ in a slightly different language in \cite[\textsection 9, (1)--(6)]{BKshift}. The current set-up is identical to \cite[\textsection 4.3]{To}.

For $1 \le i,k \le 2$, $0 \le x \le 1$ and $r > 0$, we let
\begin{eqnarray*}
\label{e:tdefn}
& & t_{i,k;x}^{(r)} := \sum_{s=1}^r (-1)^{r-s} \sum_{\substack{(i_m, j_m, k_m, l_m) \\ m =1,...,s}} (-1)^{\hash \{q=1,\dots,s-1 \mid k_q \le x\}}
e_{i_1, j_1; k_1, l_1} \cdots e_{i_s, j_s; k_s, l_s} \in S(\op),
\end{eqnarray*}
where the sum is taken over all indexes such that for $m=1,...,s$
\begin{eqnarray}
\label{e:indexbounds}
1 \le i_1,...,i_s, k_1,...,k_s \le n, 1\le j_m \le \lambda_{i_m} \text{ and } 1\le l_m \le \lambda_{k_m}
\end{eqnarray}
satisfying the following six conditions:
\begin{itemize}
\setlength{\itemsep}{4pt}
\item[(a)] $\sum_{m=1}^s (2l_m - 2j_m + \lambda_{i_m} - \lambda_{k_m}) = 2(r - s)$;
\item[(b)] $2l_m- 2j_m + \lambda_{i_m} - \lambda_{k_m} \ge 0$ for each $m = 1,\dots,s$;
\item[(c)] if $k_m > x$, then $l_m < j_{m+1}$ for each $m = 1,\dots,s-1$;
\item[(d)] if $k_m \le x$ then $l_m \ge j_{m+1}$ for each $m = 1,\dots,s-1$; 
\item[(e)] $i_1 = i$, $k_s = k$;
\item[(f)] $k_m = i_{m+1}$ for each $m = 1,\dots,s-1$.
\end{itemize}

\begin{Lemma}
\label{L:poissonsurjdefn}
These satisfy the following properties:
\begin{enumerate}
\item the elements $t_{i,k;x}^{(r)}$ lie in $S(\op)^{\tw(\on)}$.
\item the elements $t_{i,i;i-1}^{(2r)}$ and $t_{1, 2;1}^{(r)} + (-1)^{r + \frac{\lambda_2 - \lambda_1}{2}}t_{2,1;1}^{(r)}$ are invariant under the action of $\tau$ on $S(\op)$.
\item The map $S^\tau \to S(\p)^{\tw(\n)} = \C[\Ss_e]$ from Lemma~\ref{L:optop} has the following effect:
\begin{eqnarray*}
t_{i,i;i-1}^{(2r)} & \longmapsto & \eta_i^{(2r)},\\
t_{1, 2;1}^{(r)} + (-1)^{r + \frac{\lambda_2 - \lambda_1}{2}}t_{2,1;1}^{(r)} & \longmapsto & \theta^{(r)}.
\end{eqnarray*}
In other words the elements on the left hand side map to elements of $\C[\Ss_e]$ satisfying the relations of the symbols on the right, which are given in Theorem~\ref{T:PDyangian}.
\end{enumerate}
\end{Lemma}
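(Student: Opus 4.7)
The plan is to verify each of the three parts separately, largely by direct computation from the definitions, combined with citations to \cite{BKshift} and the companion work \cite{To}.

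For Part (1), I would invoke the quantum analog. In \cite{BKshift} one finds elements $T_{i,k;x}^{(r)} \in U(\op)$ defined by sums identical to those in the definition of $t_{i,k;x}^{(r)}$ but interpreted non-commutatively, and these are shown to lie in the finite $W$-algebra for $\gl_{2n}$. The associated graded with respect to the Kazhdan filtration sends $T_{i,k;x}^{(r)}$ to $t_{i,k;x}^{(r)}$ and identifies $\gr U(\gl_{2n},e)$ with $S(\op)^{\tw(\on)}$ (this is the classical limit of the Brundan--Kleshchev PBW theorem for the finite $W$-algebra in type $\sf A$), yielding Part (1).

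For Part (2), the approach is a careful sign bookkeeping. Applying $\tau$ to a monomial $e_{i_1,j_1;k_1,l_1}\cdots e_{i_s,j_s;k_s,l_s}$ via \eqref{e:taudefn} produces $(-1)^{\sum_m(j_m-l_m-1)}$ times the reversed product, which in the symmetric algebra $S(\op)$ may be rewritten in any order. One then performs the substitution $m \mapsto s+1-m$, introducing primed indices $i'_m := k_{s-m+1}$, $k'_m := i_{s-m+1}$, $j'_m := \lambda_{k_{s-m+1}}+1-l_{s-m+1}$, $l'_m := \lambda_{i_{s-m+1}}+1-j_{s-m+1}$. A direct check shows that this substitution carries the summation set for $t_{i,k;x}^{(r)}$ precisely onto the one for $t_{k,i;x}^{(r)}$: condition (a) is invariant, condition (f) in the primed variables follows from (f) in the originals, conditions (c) and (d) are preserved once (f) is used to realign indices, (b) is preserved, and (e) has $i$ and $k$ swapped. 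Using condition (a) together with the telescoping $\sum_m(\lambda_{i_m}-\lambda_{k_m}) = \lambda_i - \lambda_k$ afforded by conditions (e)--(f), one computes $\sum_m(j_m-l_m) = s - r + (\lambda_i-\lambda_k)/2$, so the overall $\tau$-sign collapses to $(-1)^{r + (\lambda_i-\lambda_k)/2}$. The remaining sign $(-1)^{\#\{q\le s-1 : k_q\le x\}}$ is preserved under the re-indexing because $k_q = i_{q+1}$ translates to $k'_{s-q} = i'_{s-q+1}$. Altogether this gives $\tau(t_{i,k;x}^{(r)}) = (-1)^{r+(\lambda_i-\lambda_k)/2}\, t_{k,i;x}^{(r)}$, and specialising to $(i,k,x) \in \{(i,i,i-1),(1,2,1),(2,1,1)\}$ yields the two invariance claims of Part (2), using that $2r$ is even, that $(\lambda_i-\lambda_i)/2 = 0$, and that $(-1)^{(\lambda_1-\lambda_2)/2} = (-1)^{(\lambda_2-\lambda_1)/2}$.

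For Part (3), once Parts (1) and (2) are in place, the statement becomes essentially a tautology: the Poisson generators $\eta_i^{(2r)}$ and $\theta^{(r)}$ of $S(\p)^{\tw(\n)}$ featured in Theorem~\ref{T:PDyangian} were introduced in \cite[\S 4]{To} precisely as the images of the $\tau$-invariant elements $t_{i,i;i-1}^{(2r)}$ and $t_{1,2;1}^{(r)} + (-1)^{r+(\lambda_2-\lambda_1)/2} t_{2,1;1}^{(r)}$ under the surjection $S^\tau \onto S(\p)^{\tw(\n)}$ of Lemma~\ref{L:optop}. The Poisson brackets among these images satisfy the relations stated in Theorem~\ref{T:PDyangian} by the classical limit of the defining relations of the shifted twisted Yangian. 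The principal obstacle is the sign bookkeeping in Part (2), specifically confirming that the overall factor $(-1)^{r-s}$, the parity sign $(-1)^{\#\{q : k_q\le x\}}$, the product $(-1)^{\sum_m(j_m - l_m - 1)}$, and the transformation of the six conditions (a)--(f) recombine cleanly into the definition of $t_{k,i;x}^{(r)}$ with the single residual sign $(-1)^{r+(\lambda_i-\lambda_k)/2}$.
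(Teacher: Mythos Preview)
Your proposal is correct. The paper's own proof is pure citation --- Parts (1) and (2) are deferred to \cite[Proposition~4.5]{To}, and Part (3) to formulas (3.59), (3.61), (4.24) and Propositions~4.7, 4.8 of \cite{To} --- so what you have written is essentially an unpacking of those references: for Part (1) you go through the quantum elements $T_{i,k;x}^{(r)}$ of \cite{BKshift} and degenerate, for Part (2) you carry out the sign bookkeeping that \cite[Proposition~4.5]{To} records, and for Part (3) you correctly observe that the generators $\eta_i^{(2r)}, \theta^{(r)}$ are by construction the images of these $\tau$-invariants. Your explicit verification that $\tau(t_{i,k;x}^{(r)}) = (-1)^{r+(\lambda_i-\lambda_k)/2}\, t_{k,i;x}^{(r)}$ via the re-indexing $m \mapsto s+1-m$ is exactly the argument behind the cited result, and your treatment of conditions (c)--(d) and the parity sign under this substitution is sound.
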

\begin{proof}
Part (1) and (2) both follow directly from \cite[Proposition~4.5]{To}. Part (3) follows from formulas (3.59), (3.61), (4.24) and Propositions~4.7 and 4.8 of {\it op. cit.}
\end{proof}

\begin{Proposition}
\label{P:gammaonC}
We have $\Ad(\gamma) \theta^{(r)} = -\theta^{(r)}$ and $\Ad(\gamma)\eta_i^{(2r)} = \eta_i^{(2r)}$ for $i=1,2$.
 If we define $z\in \C[\Ss_e]$ via formula \eqref{e:Cdefn} then $$\Ad(\gamma)z = -z.$$
\end{Proposition}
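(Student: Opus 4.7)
The strategy is to lift the computation from $\C[\Ss_e]$ to $S := S(\op)^{\tw(\on)}$ using the surjection in Lemma~\ref{L:optop} and the explicit preimages provided by Lemma~\ref{L:poissonsurjdefn}(3), exploiting the very simple description of $\Ad(\gamma)$ on the basis \eqref{e:glbasis} supplied by Lemma~\ref{L:gammalemma}(2). Observe first that $\Ad(\gamma)$ commutes with $\tau$ (as used in the proof of Lemma~\ref{L:gammalemma}(3)), so the projection $S(\op) \onto S(\p)$ from Lemma~\ref{L:optop} is $\widetilde{\Gamma}$-equivariant. Consequently the induced surjection $S^\tau \onto \C[\Ss_e]$ is $\widetilde{\Gamma}$-equivariant, and it suffices to compute $\Ad(\gamma)$ on the lifts $t_{i,i;i-1}^{(2r)}$ and $t_{1,2;1}^{(r)} + (-1)^{r + \frac{\lambda_2-\lambda_1}{2}} t_{2,1;1}^{(r)}$.

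By Lemma~\ref{L:gammalemma}(2), each monomial $e_{i_1,j_1;k_1,l_1}\cdots e_{i_s,j_s;k_s,l_s}$ occurring in the definition of $t_{i,k;x}^{(r)}$ is scaled by $\Ad(\gamma)$ by the sign $(-1)^{N}$, where $N := \#\{m \mid i_m \neq k_m\}$. Condition (f) forces $k_m = i_{m+1}$, so the sequence of pairs $(i_m, k_m)$ is a walk on $\{1,2\}$ starting at $i_1 = i$ and ending at $k_s = k$; the number $N$ of switches in such a walk satisfies $N \equiv [i \neq k] \pmod 2$. Hence every monomial in $t_{i,i;i-1}^{(2r)}$ is $\Ad(\gamma)$-fixed, while every monomial in each of $t_{1,2;1}^{(r)}$ and $t_{2,1;1}^{(r)}$ is negated. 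Applying Lemma~\ref{L:poissonsurjdefn}(3) yields $\Ad(\gamma)\eta_i^{(2r)} = \eta_i^{(2r)}$ and $\Ad(\gamma)\theta^{(r)} = -\theta^{(r)}$.

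The statement about $z$ is then immediate: using \eqref{e:Cdefn} and the fact that $\Ad(\gamma)$ is an algebra automorphism of $\C[\Ss_e]$, one computes
\[
\Ad(\gamma) z \;=\; \sum_{t=0}^{(\lambda_1 - 1)/2} \Ad(\gamma)\eta_1^{(2t)} \cdot \Ad(\gamma)\theta^{(n-2t)} \;=\; \sum_{t=0}^{(\lambda_1 - 1)/2} \eta_1^{(2t)} \cdot \bigl(-\theta^{(n-2t)}\bigr) \;=\; -z.
\]

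The only genuine step is the parity argument for the index sequence, which is essentially combinatorial and follows at once from constraints (e) and (f) on the indices; nothing more delicate is required, since Lemma~\ref{L:poissonsurjdefn} does the heavy lifting of connecting the $t$-elements to the Poisson generators of $\C[\Ss_e]$.
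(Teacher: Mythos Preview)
Your proof is correct and follows essentially the same route as the paper: lift to $S^\tau$ via the $\Ad(\gamma)$-equivariant surjection of Lemma~\ref{L:optop}, use Lemma~\ref{L:gammalemma}(2) together with the index constraints (e) and (f) to determine the sign on each $t_{i,k;x}^{(r)}$, and then push down through Lemma~\ref{L:poissonsurjdefn}(3). Your walk-parity justification for $N \equiv [i\neq k]\pmod 2$ is in fact a bit more explicit than the paper's treatment, which invokes only condition (e) and leaves the role of (f) implicit.
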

\begin{proof}
By \eqref{e:gooddegrees} and Lemma~\ref{L:gammalemma}(2) we see that $\Ad(\gamma)$ preserves $S(\op)$.

Now using Lemma~\ref{L:gammalemma}(2) and the combinatorial condition (e) appearing in the definition of $t_{i,k;x}^{(r)}$, it is easy to see that factors $e_{i,j;k,l}$ with $i \ne k$ occur with even parity in every summand of the expressions for $t_{i,i;i-1}^{(2r)}$, whilst factors $e_{i,j;k,l}$ with $i \ne k$ occur with odd parity in the expressions for both $t_{1, 2;1}^{(r)}$ and $t_{2,1;1}^{(r)}$. 
By Lemma~\ref{L:gammalemma}(2) we see that $\Ad(\gamma)(t_{i,i;i-1}^{(2r)}) = t_{i,i;i-1}^{(2r)}$ and $\tau(t_{1, 2;1}^{(r)} + (-1)^{r + \frac{\lambda_2 - \lambda_1}{2}}t_{2,1;1}^{(r)}) = - (t_{1, 2;1}^{(r)} + (-1)^{r + \frac{\lambda_2 - \lambda_1}{2}}t_{2,1;1}^{(r)})$.

Since the map $S^\tau \to S(\p)^{\tw(\n)}$ described in Lemma~\ref{L:optop} is $\Ad(\gamma)$-equivariant, it follows from Lemma~\ref{L:poissonsurjdefn}(3) that $\Ad(\gamma)$ acts as claimed on $\theta^{(r)}, \eta_i^{(2r)} \in S(\p)^{\tw(\n)} = \C[\Ss_e]$.
Now the claim $\Ad(\gamma)z = -z$ is a direct consequence of \eqref{e:Cdefn}.
\end{proof}

Recall the Pfaffian $p \in \C[\g]^G$ from Section~\ref{ss:invarianttheory}, and write $p|_{\Ss_e}$ for the restriction to the slice.

\begin{Proposition}
\label{P:CisPfaffian}
We have $z = p|_{\Ss_e}$ up to a non-zero scalar.
\end{Proposition}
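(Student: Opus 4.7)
The plan is to apply Lemma~\ref{L:identifypfaff}, which characterises $\C p|_{\Ss_e}$ as the unique one-dimensional subspace of $\Cas \C[\Ss_e]$ sitting in Kazhdan degree $2n$ and carrying the non-trivial character of a splitting $s(\Gamma) \subseteq \OO_{2n}$ preserving $\Ss_e$. So the strategy is to verify that $\C z$ satisfies each of these three conditions, and then to check $z \neq 0$ separately.

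All three conditions are essentially already in hand. Proposition~\ref{P:Cincentre} gives $z \in \Cas \C[\Ss_e]$. Lemma~\ref{L:gammalemma}(4) provides the required splitting: the subgroup $\widetilde{\Gamma} \subseteq \OO_{2n}$ generated by $\gamma$ splits $\OO_{2n} \onto \Gamma$ and fixes the $\sl_2$-triple $(e,h,f)$ pointwise, so it preserves $\Ss_e = e + \g^f$. Proposition~\ref{P:gammaonC} then asserts that $\Ad(\gamma) z = -z$, which says that $\C z$ carries the non-trivial character of $\widetilde{\Gamma}$. The Kazhdan-degree check is an immediate read-off from Theorem~\ref{T:PDyangian}: each summand $\eta_1^{(2t)}\theta^{(n-2t)}$ of $z$ has Kazhdan degree $4t + 2(n-2t) = 2n$, so $z$ is homogeneous of Kazhdan degree $2n$. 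An application of Lemma~\ref{L:identifypfaff} then yields $z = c \cdot p|_{\Ss_e}$ for some scalar $c \in \C$.

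The only real obstacle is to rule out $c = 0$, equivalently $z = 0$. My plan here is to exploit Corollary~\ref{C:kernelcor}: the Poisson surjection $\varphi \colon \C[\Ss_e] \onto \C[\Ss_{e_0}]$ has kernel generated as an ideal by $z$ alone. If $z$ were zero, then $\varphi$ would be an isomorphism of Poisson algebras, and would therefore restrict to an isomorphism of Poisson centres $\Cas \C[\Ss_e] \isoto \Cas \C[\Ss_{e_0}]$. By Lemma~\ref{L:casimirlemma} and the description of invariants recalled in Section~\ref{ss:invarianttheory}, these Poisson centres are isomorphic to $\C[\so_{2n}]^{\SO_{2n}}$ and $\C[\sp_{2n-2}]^{\Sp_{2n-2}}$, hence are polynomial algebras of Krull dimensions $n$ and $n-1$ respectively, so no such isomorphism can exist. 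Hence $z \neq 0$ and $c \neq 0$, completing the proof.
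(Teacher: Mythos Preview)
Your proof is correct and follows the paper's own argument essentially verbatim: verify that $z$ is a Casimir (Proposition~\ref{P:Cincentre}), sits in Kazhdan degree $2n$ (Theorem~\ref{T:PDyangian}), and is negated by $\Ad(\gamma)$ (Proposition~\ref{P:gammaonC}), then invoke Lemma~\ref{L:identifypfaff} via the splitting of Lemma~\ref{L:gammalemma}(4). The one addition you make is the explicit check that $z\neq 0$, which the paper's proof omits; your argument via Corollary~\ref{C:kernelcor} and a comparison of the Krull dimensions of $\C[\so_{2n}]^{\SO_{2n}}$ and $\C[\sp_{2n-2}]^{\Sp_{2n-2}}$ is a clean way to close that gap.
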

\begin{proof}
By Proposition~\ref{P:Cincentre} we know that $z$ is Poisson central in $\C[\Ss_e]$.
 By Theorem~\ref{T:PDyangian} we also know that $z$ lies in Kazhdan degree $2n$.
  The proof concludes by combining Lemma~\ref{L:identifypfaff}, Lemma~\ref{L:gammalemma}(4) and Proposition~\ref{P:gammaonC}.
\end{proof}

\section{Exceptional isomorphisms and equivariant quantizations in type {\sf C}} \label{S:mainres}
\subsection{The case of partitions with two even parts} \label{ss_easycase}
We now fix the notation required to prove the main theorems of the paper.
Let $n \geq 3$, and set $\tilde G \coloneqq \OO_{2n}$, $ G \coloneqq \SO_{2n}$ and $G_0 = \Sp_{2n-2}$.
Put $\g \coloneqq \Lie G$ and $\g_0 \coloneqq \Lie G_0$.

Fix a partition $(\lambda_1, \lambda_2) \vdash 2n$ satisfying (ii) of Section~\ref{ss:Poissonpresentations}. Pick elements $e \in \Nc(\g)$ and $e_0 \in \Nc(\g_0)$ with orbits labelled by by $(\lambda_1 , \lambda_2)$ and $(\lambda_1-1, \lambda_2-1)$ respectively.  Write $\Nc_e := \Ss_e \cap \Nc(\g)$ and $\Nc_{e_0} := \Ss_{e_0} \cap \Nc(\g_0)$.

\begin{Theorem}
\label{T:isomorphiccones}
The Poisson varieties $\Nc_e \subset \g$ and $\Nc_{e_0} \subset \g_0$ are isomorphic. 
\end{Theorem}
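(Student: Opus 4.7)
The plan is to push the Poisson surjection $\varphi\colon \C[\Ss_e] \twoheadrightarrow \C[\Ss_{e_0}]$ of Corollary~\ref{C:Poissonsurj} down to the central fibres of the two Slodowy slice families. By Corollary~\ref{C:kernelcor} combined with Proposition~\ref{P:CisPfaffian}, the kernel of $\varphi$ is the principal ideal generated by the nonzero Casimir $p|_{\Ss_e}$, where $p \in \C[\g]^G$ is the Pfaffian. Using Lemma~\ref{L:casimirlemma}, I identify $\Cas \C[\Ss_e] \simeq \C[p_2,p_4,\dots,p_{2n-2},p]$ and $\Cas \C[\Ss_{e_0}] \simeq \C[q_2,q_4,\dots,q_{2n-2}]$ as Kazhdan graded polynomial rings, with free generators in Kazhdan degrees $4, 8, \dots, 4n-4, 2n$ and $4, 8, \dots, 4n-4$ respectively.

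Since $\varphi$ is Poisson, it sends Casimirs to Casimirs and restricts to a Kazhdan graded ring map $\Cas \C[\Ss_e] \to \Cas \C[\Ss_{e_0}]$ annihilating $p|_{\Ss_e}$, hence factoring as a map $\bar\varphi\colon \Cas\C[\Ss_e]/(p|_{\Ss_e}\cdot\Cas\C[\Ss_e]) \to \Cas\C[\Ss_{e_0}]$. The heart of the argument is to show that $\bar\varphi$ is an isomorphism. For injectivity, suppose $c = p|_{\Ss_e}\cdot f \in \Cas\C[\Ss_e]$ with $f \in \C[\Ss_e]$; the Leibniz rule combined with $p|_{\Ss_e}$ being Casimir yields $p|_{\Ss_e}\{f,g\} = 0$ for every $g \in \C[\Ss_e]$, and since $\C[\Ss_e]$ is a domain ($\Ss_e$ is an affine space) this forces $f \in \Cas\C[\Ss_e]$; hence $(p|_{\Ss_e})\cap \Cas\C[\Ss_e] = p|_{\Ss_e}\cdot \Cas\C[\Ss_e]$. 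For surjectivity, both source and target of $\bar\varphi$ are graded polynomial algebras on $n-1$ generators in the matching Kazhdan degrees $4, 8, \dots, 4n-4$, so their Hilbert series coincide, and an injective graded morphism between graded algebras with equal Hilbert series must be surjective degree by degree.

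With the Casimir identification established, write $C_+\subset \Cas\C[\Ss_e]$ and $C_{0,+}\subset \Cas\C[\Ss_{e_0}]$ for the augmentation ideals. Since $p|_{\Ss_e}\in C_+$, the kernel $\ker\varphi = (p|_{\Ss_e})$ is contained in $C_+\cdot \C[\Ss_e]$, and the Casimir isomorphism above forces $\varphi(C_+)\cdot \C[\Ss_{e_0}] = C_{0,+}\cdot \C[\Ss_{e_0}]$. Passing $\varphi$ to the quotients yields a Kazhdan graded Poisson isomorphism
$$\C[\Nc_e] \,=\, \C[\Ss_e]/(C_+\cdot \C[\Ss_e]) \,\isoto\, \C[\Ss_{e_0}]/(C_{0,+}\cdot \C[\Ss_{e_0}]) \,=\, \C[\Nc_{e_0}],$$
which produces the desired isomorphism $\Nc_{e_0}\simeq \Nc_e$ of $\C^\times$-Poisson varieties.

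The main obstacle I foresee is the surjectivity of $\bar\varphi$, for which the Hilbert series comparison is essential. The crucial numerical input is that the Pfaffian is the unique extra generator on the orthogonal invariant side (sitting in Kazhdan degree $2n$), so that quotienting by it leaves a polynomial ring with precisely the same list of Kazhdan degrees as the symplectic invariants. This combinatorial coincidence—ultimately reflecting the shared-orbit-pair phenomenon recalled in Section~\ref{ss:sympduality}—is what makes the dimension count go through cleanly in the even case.
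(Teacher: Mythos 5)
Your argument is correct, and it closes the proof by a genuinely different mechanism than the paper's. Both proofs start from the surjection $\varphi \colon \C[\Ss_e] \onto \C[\Ss_{e_0}]$ of Corollary~\ref{C:Poissonsurj}, but the paper only uses the soft fact that $\varphi$ carries the maximal graded ideal of $\Cas\C[\Ss_e]$ into that of $\Cas\C[\Ss_{e_0}]$, obtaining a closed embedding $\Nc_{e_0} \hookrightarrow \Nc_e$; it then upgrades this to an isomorphism by geometric input, namely the irreducibility of $\Nc_e$ (Premet) together with the dimension count $\dim\Nc_e = \lambda_1 = \dim\Nc_{e_0}$ coming from the codimension formulas for the orbits. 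You instead make no use of irreducibility or of the dimensions of the nilpotent Slodowy varieties: you pin down $\ker\varphi$ exactly as the principal ideal generated by the Pfaffian Casimir (Corollary~\ref{C:kernelcor} plus Proposition~\ref{P:CisPfaffian}), prove the clean identity $(p|_{\Ss_e}) \cap \Cas\C[\Ss_e] = p|_{\Ss_e}\cdot\Cas\C[\Ss_e]$ via the Leibniz rule in a domain, and then match Hilbert series of the invariant rings to see that $\varphi$ identifies the two augmentation ideals on the nose, so the scheme-theoretic central fibres correspond exactly. The trade-off: your route needs the heavier Section~\ref{S:slicesinorthogonal} machinery (in particular Proposition~\ref{P:CisPfaffian}, which rests on the whole $\Gamma$-action analysis) that the paper defers until the equivariant Theorem~\ref{T:maintheorem}, whereas the paper's proof of this statement is self-contained modulo standard facts; on the other hand, you get the content of Lemma~\ref{L:coinvandvarphi} essentially for free along the way, and your argument avoids appealing to irreducibility of $\Nc_e$. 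One point worth emphasising in your write-up: the Hilbert series step genuinely requires knowing that $z$ is a \emph{free generator} of the polynomial ring $\Cas\C[\Ss_e]$ (not merely some Casimir of Kazhdan degree $2n$, which for even $n$ could a priori be decomposable, e.g.\ proportional to $p_2^{n/2}|_{\Ss_e}$), so the invocation of Proposition~\ref{P:CisPfaffian} is essential and not cosmetic.
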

\begin{proof}
We  have the following equalities for dimensions of orbits.
\begin{eqnarray*}
 \dim G \cdot e = \dim \g - \dim \g^e, &&
\dim G_0 \cdot e_0 = \dim \g_0- \dim \g_0^{e_0}.
\end{eqnarray*}
Using the transversality of the Slodowy slices we have that $\dim \Nc_e = \codim_{\Nc(\g)} (G \cdot e)$, and similarly for $\Nc_{e_0}$. 
It now follows from \cite[\textsection 3.3(3)]{JaNO} that
\begin{eqnarray}
\label{e:Ndims}
\dim \Nc_e = \lambda_1 = \dim \Nc_{e_0}.
\end{eqnarray}

Write $\varphi \colon \C[\Ss_e] \to \C[\Ss_{e_0}]$  for the Kazhdan graded homomorphism \eqref{eq_surj}.
 If we denote by $I \subseteq \Cas \C[\Ss_e]$ the unique maximal graded ideal of the Poisson centre, and similarly for $I_0 \subseteq \Cas \C[\Ss_{e_0}]$, then we have $\varphi(I) \subseteq I_0$. 

It follows from the remarks of Section~\ref{ss:slodowyPoissondeform} that $I$ generates the defining ideal of $\Nc_e$ inside $\C[\Ss_e]$ and similarly $I_0$ generates the defining ideal of $\Nc_{e_0}$ inside $\C[\Ss_{e_0}]$.
Now from the remarks of the previous paragraph we see that the morphism  of Poisson varieties $\Ss_{e_0} \to \Ss_e$ induced by $\varphi$ restricts to a closed embedding $\Nc_{e_0} \hookrightarrow \Nc_e$.
By \cite[Theorem~5.4(ii)]{PrST} we know that $\Nc_e$ is an irreducible variety, and so from \eqref{e:Ndims} it follows that $\Nc_{e_0} \to \Nc_e$ is an isomorphism of Poisson varieties.
\end{proof}

\subsection{Twisted Yangians and the case of two odd parts}\label{ss_veryeven}
Keep the notation $\g = \so_{2n}$ and $\g_0 = \sp_{2n-2}$, and now assume $n \geq 4$ is even.
Let $\lambda = (n, n)$, let $\lambda_0 = (n-1, n-1)$ and pick nilpotent elements $e \in \Nc(\g)$ and $e_0 \in \Nc(\g_0)$ in orbits with partitions $\lambda$ and $\lambda_0$ respectively. We remark that there are precisely two $\SO_{2n}$-orbits with partition $\lambda$ (see the remarks following \cite[Theorem~5.1.6]{CM}) and $e$ may be chosen in either of these.
We prove the last remaining case of Theorem~\ref{T:zeromaintheorem} (1).

\begin{Theorem}
\label{T:lastbit}
$\Nc_{e_0} \cong \Nc_e$ as Poisson $\C^\times$-varieties.
\end{Theorem}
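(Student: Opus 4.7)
The plan is to emulate the proof of Theorem~\ref{T:isomorphiccones}. The Poisson presentation of Theorem~\ref{T:PDyangian} is unavailable in the very even case: the partition $(n,n) \vdash 2n$ has both parts even and so violates hypothesis (ii) in Section~\ref{ss:Poissonpresentations}, while $(n-1, n-1) \vdash 2n-2$ has both parts odd and violates (i). Accordingly, I would first construct the required Poisson surjection at the quantum level, using Brown's description \cite{Bw} of the two relevant finite $W$-algebras as truncated twisted Yangians, and then pass to the associated graded.

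More precisely, I would first invoke \cite{Bw} to obtain compatible presentations of $U(\so_{2n}, e)$ (a truncation of an orthogonal twisted Yangian) and of $U(\sp_{2n-2}, e_0)$ (a truncation of a symplectic twisted Yangian). These presentations share a common set of generators and relations, and display $U(\sp_{2n-2}, e_0)$ as a filtered quotient of $U(\so_{2n}, e)$ by an additional ``Pfaffian-type'' relation, analogously to the role played by \eqref{e:symplecticdyrel} in Corollary~\ref{C:Poissonsurj}. This yields a strictly filtered surjection $\Phi \colon U(\so_{2n}, e) \twoheadrightarrow U(\sp_{2n-2}, e_0)$. Applying $\gr$ and using the isomorphisms $\gr U(\g, e) \cong \C[\Ss_e]$ and $\gr U(\g_0, e_0) \cong \C[\Ss_{e_0}]$ recalled in Section~\ref{ss:finiteW}, we obtain a Kazhdan graded Poisson surjection
\[
\varphi \colon \C[\Ss_e] \twoheadrightarrow \C[\Ss_{e_0}],
\]
which plays the role of \eqref{eq_surj} in the current setting.

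Once $\varphi$ is in hand, the remainder of the argument runs exactly as in Theorem~\ref{T:isomorphiccones}. The Poisson centres of $\C[\Ss_e]$ and $\C[\Ss_{e_0}]$ cut out $\Nc_e$ and $\Nc_{e_0}$ respectively (see Section~\ref{ss:slodowyPoissondeform}), so $\varphi$ sends the defining ideal of $\Nc_e$ into that of $\Nc_{e_0}$ and hence induces a closed $\C^\times$-Poisson embedding $\Nc_{e_0} \hookrightarrow \Nc_e$. Invoking \cite[\textsection 3.3(3)]{JaNO} we compute $\dim \Nc_e = n = \dim \Nc_{e_0}$, and since $\Nc_e$ is irreducible by \cite[Theorem~5.4(ii)]{PrST}, this embedding must be an isomorphism of Poisson $\C^\times$-varieties. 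Note that there is no ambiguity from the fact that $e$ may be chosen in either of the two $\SO_{2n}$-orbits of type $(n,n)$: the two orbits are swapped by an outer automorphism of $\so_{2n}$ and yield isomorphic slices.

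The main obstacle is the careful extraction of $\Phi$ from the twisted Yangian presentations. In contrast to the all-odd case, where $\C[\Ss_e]$ and $\C[\Ss_{e_0}]$ are governed by the same shifted Yangian presentation up to the single relation \eqref{e:symplecticdyrel}, here one must compare the orthogonal and symplectic truncated twisted Yangians, match up the relevant generating series on both sides, and identify the precise ``extra'' relation whose classical limit generates $\ker \varphi$ --- playing the role that the Pfaffian element $z$ of \eqref{e:Cdefn} played in Corollary~\ref{C:kernelcor} and Proposition~\ref{P:CisPfaffian}.
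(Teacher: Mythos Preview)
Your overall plan is sound and matches the paper's approach: use Brown's twisted Yangian description to produce a closed $\C^\times$-Poisson embedding $\Ss_{e_0}\hookrightarrow\Ss_e$, then argue exactly as in Theorem~\ref{T:isomorphiccones} via the dimension count $\dim\Nc_e=n=\dim\Nc_{e_0}$ and irreducibility of $\Nc_e$. Two corrections are needed in the key technical step, and the second is a genuine gap.

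First, there are not two different twisted Yangians in play. For a rectangular nilpotent with two equal blocks, Brown's theorem realises \emph{both} $U(\so_{2n},e)$ (block size $n$ even) and $U(\sp_{2n-2},e_0)$ (block size $n-1$ odd) as truncations of the \emph{same} twisted Yangian $Y_2^-$: the kernel is generated by $\{S_{i,j}^{(r)}\mid r>n\}$ in the first case and by $\{S_{i,j}^{(r)}-\tfrac12 S_{i,j}^{(r-1)}\mid r>n-1\}$ in the second. So the ``matching of generating series'' you anticipate as the main obstacle simply does not arise.

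Second, and more importantly, your plan to exhibit $U(\sp_{2n-2},e_0)$ as a filtered quotient of $U(\so_{2n},e)$ does not follow from these presentations: the two ideals of $Y_2^-$ just described are \emph{not} comparable at the filtered level (for instance $S_{i,j}^{(n+1)}$ reduces modulo the second ideal to $\tfrac14 S_{i,j}^{(n-1)}\ne 0$). The paper avoids this by passing to the associated graded algebra $y_2^-=\gr Y_2^-$ \emph{first}. There the lower-order shift $-\tfrac12 S_{i,j}^{(r-1)}$ disappears, the two kernels become the graded ideals generated by $\{s_{i,j}^{(r)}\mid r>n\}$ and $\{s_{i,j}^{(r)}\mid r>n-1\}$ respectively, and the containment $I\subseteq I_0$ is immediate. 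This yields the graded Poisson surjection $\C[\Ss_e]\twoheadrightarrow\C[\Ss_{e_0}]$ directly, without ever constructing a filtered map $\Phi$ between the $W$-algebras. From that point your argument (Poisson centres, closed embedding of nilpotent parts, dimension comparison, irreducibility) goes through verbatim.
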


The method we used in Theorem~\ref{T:isomorphiccones} depended entirely on Theorem~\ref{T:PDyangian}, which only applies in the case where $\lambda$ has parts of odd size. In order to carry out the argument in our setting, we recall a quantum analogue of Theorem~\ref{T:PDyangian} which applies to nilpotent elements which have all Jordan blocks of the same size. This leads to an analogue of Corollary~\ref{C:Poissonsurj} in our setting.

The twisted Yangian $Y_{n}^-$ is a non-commutative algebra introduced by Olshanski \cite{O}, see \cite{Mo} for a good survey of its properties. It is generated by symbols $S_{i,j}^{(r)}$ with $i,j = 1,2,..,n$ and $r > 0$ subject to the {\it quaternary} and {\it symmetry} relations \cite[(2.6), (2.7)]{Mo}. The canonical filtration on $Y_n^-$ places $S_{i,j}^{(r)}$ in degree $r$.

The following is a special case of \cite[Theorem~1.2 \& 2.3]{Bw}
\begin{Theorem}
\label{T:Brownstheorem}
\begin{enumerate}
\item There is a surjective filtered algebra homomorphism 
$Y_2^- \onto U(\g,e)$
with kernel generated by
\begin{eqnarray}
\label{e:ker1}
\{S_{i,j}^{(r)} \mid r > n\}.
\end{eqnarray}

\item There is a surjective filtered algebra homomorphism $Y_2^-  \onto  U(\g_0,e_0)$ with kernel generated by
\begin{eqnarray}
\label{e:ker2}
\{S_{i,j}^{(r)}  - \frac{1}{2} S_{i,j}^{(r-1)} \mid r > n-1\}.
\end{eqnarray}
\end{enumerate}
\end{Theorem}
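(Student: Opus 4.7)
The plan is to follow the same strategy as Theorem~\ref{T:isomorphiccones}, but since Theorem~\ref{T:PDyangian} does not apply to the very even partition $\lambda = (n,n)$ (whose parts are even, placing us outside case (ii)), I would replace the Poisson presentation-based argument with the classical limit of Brown's twisted Yangian presentations from Theorem~\ref{T:Brownstheorem}. The goal is still to produce a Kazhdan graded Poisson surjection $\varphi \colon \C[\Ss_e] \twoheadrightarrow \C[\Ss_{e_0}]$ and then to restrict to nilpotent parts.

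To construct $\varphi$, I would first verify that when $Y_2^-$ is equipped with the canonical loop filtration (in which $S_{i,j}^{(r)}$ sits in a Kazhdan degree proportional to $r$), both surjections in Theorem~\ref{T:Brownstheorem} are strictly filtered with respect to the Kazhdan filtrations on the target finite $W$-algebras. Taking associated graded would then give graded Poisson surjections
\begin{equation*}
\pi \colon \gr Y_2^- \twoheadrightarrow \gr U(\g,e) = \C[\Ss_e], \qquad \pi_0 \colon \gr Y_2^- \twoheadrightarrow \gr U(\g_0, e_0) = \C[\Ss_{e_0}].
\end{equation*}
Since $S_{i,j}^{(r-1)}$ lies in strictly lower filtration degree than $S_{i,j}^{(r)}$, the principal symbol of $S_{i,j}^{(r)} - \frac{1}{2} S_{i,j}^{(r-1)}$ in $\gr Y_2^-$ is simply $\bar S_{i,j}^{(r)}$. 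Hence $\ker \pi$ is the Poisson ideal generated by $\{\bar S_{i,j}^{(r)} \mid r > n\}$ whilst $\ker \pi_0$ is the Poisson ideal generated by $\{\bar S_{i,j}^{(r)} \mid r > n-1\}$. As the former ideal is contained in the latter, the map $\pi_0$ factors uniquely through $\pi$, yielding the desired Kazhdan graded Poisson surjection $\varphi$, which plays the role in this setting of the map \eqref{eq_surj} from Corollary~\ref{C:Poissonsurj}.

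With $\varphi$ in hand, the endgame is identical to Theorem~\ref{T:isomorphiccones}. The map $\varphi$ carries Casimirs of $\C[\Ss_e]$ into Casimirs of $\C[\Ss_{e_0}]$, so the induced closed embedding $\Ss_{e_0} \hookrightarrow \Ss_e$ restricts to a closed embedding $\Nc_{e_0} \hookrightarrow \Nc_e$. The target $\Nc_e$ is irreducible by \cite[Theorem~5.4(ii)]{PrST}, and a direct computation with the classical centralizer formulae shows that $\dim \Nc_e = n = \dim \Nc_{e_0}$ in the very even case. A closed embedding between irreducible affine varieties of the same dimension is an isomorphism, so $\Nc_{e_0} \cong \Nc_e$ as Poisson $\C^\times$-varieties. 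The choice of $e$ within the two $\SO_{2n}$-orbits labelled by $(n,n)$ is immaterial since the two orbits are exchanged by an outer automorphism of $\so_{2n}$, which induces an isomorphism of slices.

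The main technical obstacle will be the careful verification that Brown's surjections are strictly filtered with the stated filtration conventions, and the corresponding identification of the two kernels in $\gr Y_2^-$ as honest Poisson ideals generated by the listed principal symbols; here one must also track the symmetry relations of $Y_2^-$ to rule out additional generators. Once this filtered-to-graded bookkeeping is in place, the dimension-and-irreducibility argument requires nothing new beyond what appears in the proof of Theorem~\ref{T:isomorphiccones}.
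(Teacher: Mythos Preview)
Your proposal does not prove Theorem~\ref{T:Brownstheorem}; it \emph{assumes} Theorem~\ref{T:Brownstheorem} and uses it to prove Theorem~\ref{T:lastbit}. These are different statements. Theorem~\ref{T:Brownstheorem} asserts the existence of surjective filtered algebra homomorphisms $Y_2^- \twoheadrightarrow U(\g,e)$ and $Y_2^- \twoheadrightarrow U(\g_0,e_0)$ with the specified kernels; this is a structural result about finite $W$-algebras as quotients of twisted Yangians, and the paper does not prove it at all---it is quoted directly from Brown's work \cite[Theorem~1.2 \& 2.3]{Bw}. Nothing in your proposal addresses the construction of these maps or the determination of their kernels.

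What you have written is, in fact, a correct outline of the paper's proof of Theorem~\ref{T:lastbit}: pass to associated graded, observe that the principal symbols of the kernel generators in \eqref{e:ker2} coincide with those in \eqref{e:ker1} shifted by one index (so $\ker\pi \subseteq \ker\pi_0$), obtain the closed Poisson embedding $\Ss_{e_0}\hookrightarrow\Ss_e$, and conclude by the dimension/irreducibility argument from Theorem~\ref{T:isomorphiccones}. This is exactly Corollary~\ref{C:veryevenembedding} and the paragraph that follows it. So as a proof of Theorem~\ref{T:lastbit} your strategy matches the paper's; as a proof of Theorem~\ref{T:Brownstheorem} it is vacuous, since that theorem is precisely the external input you are invoking.
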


We let $y_2^- = \gr Y_2^-$ be the associated graded algebra with respect to the canonical filtration. It is a commutative algebra and comes equipped with a Poisson structure, see \cite[(2.2)]{ACET}, \cite[Remark~2.4.5]{Mo}. From the above we can now deduce the existence of required map between Slodowy slices.

Let $(e,h,f)$ and $(e_0, h_0, f_0)$ be choices of $\sl_2$-triples for $e$ and $e_0$ respectively.

\begin{Corollary}
\label{C:veryevenembedding}
There is a $\C^\times$-equivariant closed Poisson embedding $e_0 + \g_0^{f_0} \hookrightarrow e + \g^f$.
\end{Corollary}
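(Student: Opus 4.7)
The plan is to use Brown's Theorem~\ref{T:Brownstheorem} to produce a graded Poisson surjection $\C[\Ss_e] \twoheadrightarrow \C[\Ss_{e_0}]$, which dualizes on $\Spec$ to the required closed Poisson embedding. The essential observation is that the two kernels in \eqref{e:ker1} and \eqref{e:ker2} have principal symbols that differ only by a shift of the index range: the first gives $s_{i,j}^{(r)}$ for $r > n$, the second gives $s_{i,j}^{(r)}$ for $r \ge n$. Once this is established, the desired map between the graded quotients falls out formally.

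First, I would apply the associated graded construction to $\pi\colon Y_2^- \twoheadrightarrow U(\g,e)$ and $\pi_0\colon Y_2^- \twoheadrightarrow U(\g_0,e_0)$. Using $\gr U(\g,e)\cong \C[\Ss_e]$ and $\gr U(\g_0,e_0)\cong \C[\Ss_{e_0}]$ from Section~\ref{ss:finiteW}, this yields graded Poisson surjections
$$\phi := \gr\pi \colon y_2^- \twoheadrightarrow \C[\Ss_e], \qquad \phi_0 := \gr\pi_0 \colon y_2^- \twoheadrightarrow \C[\Ss_{e_0}].$$
Next, I would compare the graded kernels. The generators $S_{i,j}^{(r)}$ of $\ker\pi$ (for $r>n$) sit in filtration degree $r$ with principal symbol $s_{i,j}^{(r)}$, so $\ker\phi$ contains the Poisson ideal $\langle s_{i,j}^{(r)} \mid r>n\rangle$. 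The generators $S_{i,j}^{(r)} - \tfrac{1}{2}S_{i,j}^{(r-1)}$ of $\ker\pi_0$ (for $r>n-1$) also sit in filtration degree $r$, but with principal symbol $s_{i,j}^{(r)}$, since the subtracted term lives in strictly smaller filtration and so contributes nothing to the top piece. Thus $\ker\phi_0$ contains the Poisson ideal $\langle s_{i,j}^{(r)}\mid r\ge n\rangle$, and in particular every $s_{i,j}^{(r)}$ with $r>n$ lies in $\ker\phi_0$.

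The remaining step is to promote these observations to the inclusion $\ker\phi\subseteq\ker\phi_0$. For this I would appeal to the graded PBW-type statement underlying Brown's truncated twisted Yangian presentation: namely that $\ker\phi$ is \emph{exactly} the Poisson ideal $\langle s_{i,j}^{(r)}\mid r>n\rangle$, with no additional relations created in the passage to the associated graded (this is the commutative shadow of the truncated twisted Yangian, analogous to the Brundan--Kleshchev truncation theorem for shifted Yangians in type A). Granting this, one obtains an induced graded Poisson surjection $\C[\Ss_e]\twoheadrightarrow\C[\Ss_{e_0}]$, and taking $\Spec$ yields the closed Poisson embedding $\Ss_{e_0} = e_0+\g_0^{f_0}\hookrightarrow e+\g^f = \Ss_e$. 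The $\C^\times$-equivariance is automatic, since every map involved is homogeneous of degree zero with respect to the Kazhdan gradings.

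The main obstacle is precisely the verification in the last paragraph: that Brown's filtered presentation of $\ker\pi$ survives intact on passing to $\gr$, i.e.\ that no extra lower-order relations arise among the principal symbols. Everything else is a routine comparison of homogeneous generating sets, and once the graded PBW statement is in hand the embedding is essentially formal.
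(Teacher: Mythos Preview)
Your proposal is correct and follows essentially the same approach as the paper: pass to associated graded, identify the principal symbols of Brown's kernel generators as $s_{i,j}^{(r)}$ for $r>n$ (resp.\ $r>n-1$), and deduce the inclusion $\ker\phi\subseteq\ker\phi_0$ from a PBW-type statement ensuring that $\ker\phi$ is exactly the ideal generated by those symbols. The paper dispatches the step you flag as the ``main obstacle'' by citing \cite[Theorem~2.3]{Bw}, so your identification of where the work lies is on target.
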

\begin{proof}
Consider the associated graded maps of the surjections appearing in Theorem~\ref{T:Brownstheorem}. Write $I$ for the kernel of $y_2^- \onto \C[e+\g^f]$ and $I_0$ for the kernel of the map $y_2^- \onto \C[e_0 + \g_0^{f_0}]$. We claim that we have an inclusion of graded ideals $I \subseteq I_0$, and this will imply the corollary.

Write $s_{i,j}^{(r)}$ for the image of $S_{i,j}^{(r)} $ in $y_2^-$. The claim will follow if we can show that $I_0$ is generated  by $\{s_{i,j}^{(r)} \mid r > n\}$ and $I_0$ is generated by $\{s_{i,j}^{(r)} \mid r > n-1\}$, as associative algebra ideals. Examining \eqref{e:ker1} and \eqref{e:ker2} it is clear that the specified elements lie in $I$ and $I_0$ respectively. The fact that they generate can be easily deduced from \cite[Theorem~2.3]{Bw}.
\end{proof}

\begin{proofof}
Following the same line of reasoning as the first part of the proof of Theorem~\ref{T:isomorphiccones} we see that $\dim \Nc_{e_0} = \dim \Nc_{e}$. It follows that the inclusion from Corollary~\ref{C:veryevenembedding} is actually an isomorphism. \hfill\qed
\end{proofof}

\subsection{Equivariant deformations and quantizations}
In this final section we prove Theorem~\ref{T:maintheorem}.
Let $n \geq 3$, and put $\g \coloneqq \so_{2n}$ and $\g_0 \coloneqq \sp_{2n-2}$.
Fix a partition $\lambda = (\lambda_1, \lambda_2) \vdash 2n$ satisfying (ii) from Section~\ref{ss:Poissonpresentations}, so that $\lambda_0 = (\lambda_1-1, \lambda_2-1)$ satisfies (i). Pick $\sl_2$-triples $(e,h,f)$ in $\g$ and $(e_0, h_0, f_0)$ in $\g_0$ for these partitions.

In Sections~\ref{ss:slodowyPoissondeform} and \ref{ss:finiteW}, we explained that $(\C[\Ss_{e_0}],  \i) \in \PD_{\C[\Nc_{e_0}]}(\C[\g_0]^{G_0})$ and $(U(\g_0,e_0),  \i) \in \Qnt_{\C[\Nc_{e_0}]}(\C[Z(\g_0)])$.

By Lemma~\ref{L:gammalemma}(4) there is a subgroup $\Gamma\subset \tilde G$ which splits the outer automorphism group of $\g$, and fixes $(e,h,f)$ pointwise.
 The $\Gamma$-action on $\Ss_e$ stabilises $\C[\g]^G$.

\begin{equation}\label{eq:restr_cas}
\psi \colon \C[\g]^{G} \to \Cas \C[\Ss_e] \overset{\varphi}{\longrightarrow} \Cas \C[\Ss_{e_0}] \to \C[\g_0]^{G_0}
\end{equation}
where the  homomorphism $\varphi$ is described in \eqref{eq_surj}, and the other two maps arise from Lemma~\ref{L:casimirlemma}.

\begin{Lemma}
\label{L:coinvandvarphi}
The map $\psi$ is surjective and its kernel coincides with the kernel of the quotient of $\C[\g]^G$ to the $\Gamma$-coinvariants, therefore the algebras 
$\C[\g_0]^{G_0}$ and  $(\C[\g]^G)_\Gamma$ are graded isomorphic.
\end{Lemma}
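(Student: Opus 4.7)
The plan is to factor $\psi$ as the composition of the isomorphism $\C[\g]^G \cong \Cas \C[\Ss_e]$ with the restriction of $\varphi$ to Casimirs (well-defined because $\varphi$ is a surjective Poisson map), followed by the isomorphism $\Cas \C[\Ss_{e_0}] \cong \C[\g_0]^{G_0}$ from Lemma~\ref{L:casimirlemma}. The goal is to identify $\ker \psi$ with the principal ideal $(p) \subseteq \C[\g]^G$ generated by the Pfaffian, and then to verify that this ideal coincides with the kernel of the $\Gamma$-coinvariants quotient.

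The latter identification is straightforward: by Lemma~\ref{L:characterisePfaffian}, the nontrivial element of $\Gamma$ fixes each generator $p_2, p_4, \ldots, p_{2n-2}$ of the polynomial algebra $\C[\g]^G = \C[p_2, \ldots, p_{2n-2}, p]$ and sends $p$ to $-p$, so the defining ideal of the $\Gamma$-coinvariant quotient is generated by $p - (-p) = 2p$, hence is $(p)$. The inclusion $(p) \subseteq \ker \psi$ follows by combining Proposition~\ref{P:CisPfaffian}, which identifies $p|_{\Ss_e}$ with the Casimir $z$ of \eqref{e:Cdefn} up to a nonzero scalar, with Corollary~\ref{C:kernelcor}, which asserts that $z$ generates $\ker \varphi$.

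For the reverse inclusion $\ker \psi \subseteq (p)$, the crucial ingredient is the freeness of $\C[\Ss_e]$ as a graded module over $\Cas \C[\Ss_e] \cong \C[\g]^G$. This follows from the flatness of the adjoint quotient $\Ss_e \to \g /\!/ G$ recalled in Section~\ref{ss:slodowyPoissondeform}, since any graded flat module over a connected graded polynomial algebra is free. Fix a homogeneous $\C[\g]^G$-basis $\{b_i\}_{i \in I}$ of $\C[\Ss_e]$ with $b_0 = 1$. If $f \in \ker \psi$ then $f|_{\Ss_e} \in \ker \varphi = \C[\Ss_e] \cdot p|_{\Ss_e}$, so we may write $f|_{\Ss_e} = p|_{\Ss_e} \cdot \sum_i h_i b_i$ with $h_i \in \C[\g]^G$. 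Since $f|_{\Ss_e}$ lies in the summand $\C[\g]^G \cdot b_0$ and $p$ is a nonzero divisor in the polynomial ring $\C[\g]^G$, uniqueness of the basis expansion forces $h_i = 0$ for $i \neq 0$; pulling back through the isomorphism $\C[\g]^G \cong \Cas \C[\Ss_e]$ yields $f = p h_0 \in (p)$.

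To conclude, the induced graded injection $\bar\psi \colon \C[\g]^G/(p) \to \C[\g_0]^{G_0}$ is a map of graded polynomial algebras in $n-1$ variables: the source $\C[p_2, \ldots, p_{2n-2}]$ has generators in Kazhdan degrees $4, 8, \ldots, 4(n-1)$, and the target is generated by $n-1$ Casimirs of exactly the same Kazhdan degrees. Both sides therefore have Hilbert series $\prod_{i=1}^{n-1}(1-t^{4i})^{-1}$, and a graded injection between graded vector spaces of equal Hilbert series is an isomorphism in each degree. Hence $\bar\psi$ is an isomorphism of graded algebras, so $\psi$ is surjective with $\ker \psi = (p)$, and $\C[\g_0]^{G_0} \cong (\C[\g]^G)_\Gamma$ as graded algebras. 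The only non-routine input in this plan is the freeness of $\C[\Ss_e]$ over $\C[\g]^G$, which we take for granted as a standard consequence of the Poisson deformation structure.
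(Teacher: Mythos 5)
Your proof is correct and follows essentially the same route as the paper: identify $\ker\psi$ with the ideal $(p)$ via Corollary~\ref{C:kernelcor} and Proposition~\ref{P:CisPfaffian}, identify the $\Gamma$-coinvariant ideal with $(p)$ via Lemma~\ref{L:characterisePfaffian}, and deduce surjectivity by comparing graded dimensions of the two polynomial algebras. The one step you elaborate beyond the paper --- that $\ker\varphi \cap \Cas\C[\Ss_e]$ equals $z\cdot\Cas\C[\Ss_e]$ rather than merely containing it --- is indeed worth justifying, though it follows more cheaply than from freeness of $\C[\Ss_e]$ over its Poisson centre: if $zg$ is a Casimir then $z\{g,a\}=0$ for all $a$, and since $\C[\Ss_e]$ is a domain and $z\neq 0$, the factor $g$ is already a Casimir.
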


\begin{proof}
We have the equality $\ker \varphi \cap \Cas \C[\Ss_e] =  (z)$, by Corollary~\ref{C:kernelcor}, whilst by Proposition~\ref{P:CisPfaffian} we have $(p|_{\Ss_e}) = (z)$ as ideals in $\C[\Ss_e]$.
This implies that $\ker \psi = (p)$, where $p$ denotes the Pfaffian.
Since $p$ is a homogeneous generator of $\C[\g]^G$, and since the dimensions of the graded components of  $\C[\g]^G$ and $\C[\g_0]^{G_0}$ are uniquely determined by the degrees of the homogeneous generators, we obtain surjectivity of the map $\psi$ by comparison of the graded components.
By Lemma~\ref{L:characterisePfaffian} we see that the algebra of $\Gamma$-coinvariants $(\C[\g]^G)_\Gamma$ is obtained from $\C[\g]^G$ after quotienting with the ideal $(p)$. This completes the argument.
\end{proof}

By Theorem~\ref{T:isomorphiccones} we may regard $\Gamma$ as a group of Kazhdan graded Poisson automorphisms of $\C[\Nc_e] \simeq \C[\Nc_{e_0}]$.
Thus we may consider the fixed-points functors $\PD^\Gamma_{\C[\Nc_{e_0}]}$ and $\Qnt^\Gamma_{\C[\Nc_{e_0}]}$ from Sections \ref{S:Pdef} and \ref{ss:quantandfunctor}.

\begin{Remark}
Observe that, in the above situation, $e$ is regular if and only if $e_0$ is regular, if and only if $\Nc_e = \{e\}$  if and only if $\Nc_{e_0} = \{e_0\}$, so Theorem \ref{T:isomorphiccones} holds trivially.
Moreover, in such cases the deformation theory of $\Nc_{e_0} \simeq \Nc_e$  degenerates to a triviality, see also the remark following \cite[Theorem 3.5]{ACET}.
For this reason, in what follows, we exclude the regular case.
\end{Remark}

We now prove Theorem~\ref{T:maintheorem}.

\begin{Theorem}
\label{T:PDGammaoddcases}
Let $\g_0, e_0$ be as fixed in this section, and suppose that $e_0$ is not regular.
\begin{enumerate}
\item $\PD^\Gamma_{\C[\Nc_{e_0}]}$ is represented by $\C[\g_0]^{G_0}$  and $(\C[\Ss_{e_0}], \iota_0)$ is a universal element.
\item $\Qnt^\Gamma_{\C[\Nc_{e_0}]}$ is represented by $Z(\g_0)$ and $(U(\g_0,e_0), \iota_0)$ is a universal element.
\end{enumerate}
\end{Theorem}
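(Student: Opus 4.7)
The plan is to reduce both assertions to the type $\sf D$ universal results of Theorem~\ref{T:universaltheorem} applied to the pair $(\so_{2n}, e)$, and then pass to $\Gamma$-coinvariants using the machinery of \cite[\textsection 2]{ACET}. Since $e_0$ is non-regular, so is $e$, and since $\g = \so_{2n}$ is of type $\sf D$ the pair $(\g, G\cdot e)$ lies outside Table~\ref{table}. Hence $(\C[\Ss_e], \iota)$ represents $\PD_{\C[\Nc_e]}$ over $\C[\g]^G$ and $(U(\g,e), \iota)$ represents $\Qnt_{\C[\Nc_e]}$ over $Z(\g)$. The $\Gamma$-equivariant Poisson isomorphism $\C[\Nc_e] \simeq \C[\Nc_{e_0}]$ of Theorem~\ref{T:isomorphiccones} identifies $\PD^\Gamma_{\C[\Nc_{e_0}]}$ and $\Qnt^\Gamma_{\C[\Nc_{e_0}]}$ with their type $\sf D$ counterparts, and by \cite[Proposition~2.23 and Theorem~2.39]{ACET} these are represented over the coinvariant algebras $(\C[\g]^G)_\Gamma$ and $(Z(\g))_\Gamma$ respectively, with universal elements obtained by base change from the type $\sf D$ universals.

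For part (1), Lemma~\ref{L:coinvandvarphi} supplies a graded Poisson isomorphism $(\C[\g]^G)_\Gamma \cong \C[\g_0]^{G_0}$ whose kernel, viewed as an ideal in $\C[\g]^G$, is generated by the Pfaffian $p$. Hence the base change of $(\C[\Ss_e], \iota)$ along $\C[\g]^G \twoheadrightarrow (\C[\g]^G)_\Gamma$ is $\C[\Ss_e]/(p|_{\Ss_e})$, which by Proposition~\ref{P:CisPfaffian} equals $\C[\Ss_e]/(z)$, and by Corollary~\ref{C:kernelcor} is identified with $\C[\Ss_{e_0}]$ via $\varphi$. Tracing the induced isomorphism on central fibres confirms that the resulting structure map is $\iota_0$, so $(\C[\Ss_{e_0}], \iota_0)$ is the universal Poisson $\Gamma$-deformation over $\C[\g_0]^{G_0}$.

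For part (2) the analogous identification requires a filtered isomorphism $(Z(\g))_\Gamma \cong Z(\g_0)$. The universality of $(U(\g,e), \iota)$ applied to $(U(\g_0,e_0), \iota_0) \in \Qnt_{\C[\Nc_e]}(Z(\g_0))$ supplies a unique strictly filtered morphism $\alpha \colon Z(\g) \to Z(\g_0)$ realizing this base change. Applying $\gr$, and invoking the compatibility between $\PD^\Gamma$ and $\Qnt^\Gamma$ under the associated graded functor recalled in Section~\ref{ss:quantandfunctor} together with part (1), one sees that $\gr \alpha$ factors through $(\C[\g]^G)_\Gamma$ as the isomorphism $\psi$ of Lemma~\ref{L:coinvandvarphi}. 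A comparison of filtered dimensions, combined with the strict filteredness of $\alpha$, then forces $\alpha$ itself to factor through $(Z(\g))_\Gamma$ and to descend to a filtered isomorphism $\bar\alpha \colon (Z(\g))_\Gamma \isoto Z(\g_0)$, under which the universal $\Gamma$-quantization corresponds to $(U(\g_0,e_0), \iota_0)$.

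The main obstacle is the descent of $\alpha$ through the $\Gamma$-coinvariants; equivalently, one must verify that $(U(\g_0,e_0), \iota_0)$ is a $\Gamma$-fixed element of $\Qnt_{\C[\Nc_e]}(Z(\g_0))$, i.e., that the hidden $\Gamma$-action on $\C[\Ss_{e_0}]$ coming from the type $\sf D$ identification lifts to filtered $Z(\g_0)$-algebra automorphisms of $U(\g_0,e_0)$. A natural source for these lifts is the $\Gamma$-stable two-sided ideal in $U(\g,e)$ generated by any anti-invariant lift $\hat{p} \in Z(\g)$ of the Pfaffian, whose quotient ought to be identified with $U(\g_0,e_0)$. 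Once this compatibility is secured, the remainder of the argument is formal and the proof concludes.
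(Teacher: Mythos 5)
Your treatment of part (1) is correct and is essentially the paper's argument: represent $\PD^\Gamma_{\C[\Nc_{e_0}]}$ over the coinvariants $(\C[\g]^G)_\Gamma$, identify this base with $\C[\g_0]^{G_0}$ via Lemma~\ref{L:coinvandvarphi}, and observe that the base change of $\C[\Ss_e]$ along $\C[\g]^G \onto (\C[\g]^G)_\Gamma$ is $\C[\Ss_e]/(p|_{\Ss_e}) = \C[\Ss_e]/(z) \cong \C[\Ss_{e_0}]$.

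Part (2), however, contains a genuine gap, and you have flagged it yourself without closing it. Your route requires knowing that the classifying map $\alpha \colon Z(\g) \to Z(\g_0)$ factors through $(Z(\g))_\Gamma$, which is equivalent to $(U(\g_0,e_0),\iota_0)$ being a $\Gamma$-fixed point of $\Qnt_{\C[\Nc_{e_0}]}(Z(\g_0))$. The ``comparison of filtered dimensions'' argument does not establish this: strict filteredness gives $\gr \ker\alpha = \ker\gr\alpha$, so $\ker\alpha$ and the coinvariant ideal of $Z(\g)$ have the same Hilbert series, but two filtered ideals with equal associated gradeds need not be comparable (in $\C[x]$ with the degree filtration, $(x)$ and $(x-1)$ have the same associated graded ideal). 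Thus everything reduces to the lifting statement you defer with ``ought to be identified'' and ``once this compatibility is secured,'' and that statement is precisely what is not free: the hidden $\Gamma$-symmetry of $\Nc_{e_0}$ does not come from $\Aut(\sp_{2n-2})$, so there is no a priori action on $U(\g_0,e_0)$ to invoke.

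The paper closes this by citing \cite[Theorem~1.1]{ACET}, which says in effect that for these singularities a filtered quantization of the universal Poisson ($\Gamma$-)deformation is automatically the universal ($\Gamma$-)quantization (uniqueness of quantizations of a given graded Poisson deformation, going back to Losev). Given part (1), the fact that $U(\g_0,e_0)$ is a filtered quantization of $\C[\Ss_{e_0}]$ over $Z(\g_0)$ then immediately yields part (2), with no need to verify $\Gamma$-fixedness by hand or to realise $U(\g_0,e_0)$ as a quotient of $U(\so_{2n},e)$ by a lift of the Pfaffian. Either import that theorem, or supply the missing verification; as written the argument for (2) is incomplete.
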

\begin{proof}

Since $\Nc_{e}$ is a conical symplectic singularity the functors $\PD_{\C[\Nc_{e}]}$ and $\Qnt_{\C[\Nc_{e}]}$ are both representable (Section~\ref{ss:quantandfunctor}).
In \cite[Theorem~3.5 \& 3.6]{ACET} it was demonstrated that they are both represented over $\C[\g]^G$ and $Z(\g)$ respectively, and that we have the following universal elements (see Sections~\ref{ss:slodowyPoissondeform} and \ref{ss:finiteW}):
\begin{itemize}
\item[(i)] $(\C[\Ss_e], \i) \in \PD_{\C[\Nc_{e}]}(\C[\g]^G)$
\item[(ii)] $(U(\g,e), \iota) \in \Qnt_{\C[\Nc_e]}(Z(\g))$
\end{itemize}

By Theorem~\ref{T:isomorphiccones} the same remarks hold if we replace $e$ by $e_0$. 

As we explained in Section~\ref{ss:quantandfunctor} the functor $\PD^\Gamma_{\C[\Nc_{e_0}]}$ is represented over the base $(\C[\g]^G)_\Gamma$. 
Furthermore an universal element is given by the base change of $(\C[\Ss_e], \i)$ through the natural map $\C[\g]^G \to (\C[\g]^G)_\Gamma$. 
Now part (1) of the Theorem follows from Lemma~\ref{L:coinvandvarphi}.

Since $U(\g_0, e)$ is a filtered quantization of $\C[\Ss_{e_0}]$ part (2) of the Theorem follows from \cite[Theorem~1.1]{ACET}.
\end{proof}

\begin{Remark} \label{rk_orc_ve}
Although $\Nc_e \cong \Nc_{e_0}$ when $e$ has Jordan type $(n,n)$, $e_0$ has type $(n-1, n-1)$ and $n$ is even (by Theorem~\ref{T:lastbit}), there can be no analogue of Theorem~\ref{T:PDGammaoddcases} formulated for a group of automorphisms $\Gamma \subseteq \CA(e)$ (see Remark~\ref{rk_orc} for notation).
The orbit of $e$ is not characteristic in $\g$ and the outer reductive centralizer satisfies $\CA(e) \subset \SO_{2n}$.
Hence the action of $\CA(e)$ on $\C[\Ss_e]$ restricts to a trivial action on the Poisson centre, and for any subgroup $\Gamma \subset \CA(e)$ the universal element for $\PD^\Gamma(\C[\Nc_{e_0}])$ is $\C[\Ss_e]$. The same remarks apply in the non-commutative setting, thanks to \cite[Theorem~1.1]{ACET}.

It would be interesting to know whether $\Nc_{e_0}$ is equipped with any symmetries which do not come from restricting elements of $\Aut(\sp_{2n-2})$ or of $\Aut(\so_{2n})$, which could serve as a replacement for $\Gamma$ when trying to formulate such a statement.
\end{Remark}


\begin{thebibliography}{999}

\bibitem{ACET} {\sc F. Ambrosio, G. Carnovale, F. Esposito, L. Topley}, Universal filtered quantizations of nilpotent Slodowy slices, to appear in {\it J. Noncommutative Geom.} {\tt arXiv:2005.07599v4 [math.RT]}.

\bibitem{Be}{\sc A. Beauville},
Symplectic singularities.
{\it Invent. Math.} {\bf 139} \ (2000), no. 3, 541--549.

\bibitem{BPLW2} {\sc T. Braden, A. Licata, N. Proudfoot, B. Webster},
Quantizations of conical symplectic resolutions II: category $\mathcal{O}$ and symplectic duality.
{\it Astérisque} (2016), no. 384, 75--179.

\bibitem{Bw}{\sc J. Brown},
Twisted Yangians and finite $W$-algebras,
   {\it Transform. Groups},
   {\bf 14},
   2009.

\bibitem{BKshift} {\sc J. Brundan, A. Kleshchev},
Shifted Yangians and finite $W$-algebras.
{\it Adv.\ Math.} \ {\bf 200} (2006), 136--195.

\bibitem{BK}{\sc R. Brylinski, B. Kostant},
Nilpotent orbits, normality and Hamiltonian group actions.
{\it J. Amer. Math. Soc.} {\bf 7} (1994), no. 2, 269--298.

 \bibitem{CG}{ \sc N. Chriss, V. Ginzburg},
{ ``Representation theory and complex geometry''}.
Modern Birkh\"{a}user Classics, Birkh\"{a}user Boston, Ltd., Boston, MA,
  2010.

\bibitem{CM}
{\sc D.H.~Collingwood,  W.~McGovern},
{``Nilpotent orbits in semisimple Lie algebras''}.
 Van Nostrand Reinhold, New york, 1993.

\bibitem{GG}
{\sc W.~L.~Gan, V.~Ginzburg},
Quantization of Slodowy slices.
{\it Internat. Math. Res. Notices} {\bf 5} \ (2002), 243--255.

\bibitem{FJLS} {\sc B. Fu, D. Juteau, P. Levy, E. Sommers},
Local geometry of special pieces of nilpotent orbits.
{\tt 	arXiv:2308.07398} (2023).

\bibitem{HL}
{\sc Anthony Henderson, Anthony Licata},
Diagram automorphisms of quiver varieties.
{\it Adv. Math.} {\bf 267} (2014), 225--276.

\bibitem{JaNO}
{\sc J.~C.~Jantzen},
``Nilpotent orbits in representation theory''.
Progress in Math., vol.\ 228, Birkh\"auser, 2004.

\bibitem{LNS} {\sc M. Lehn, Y. Namikawa, Ch. Sorger},
Slodowy slices and universal Poisson deformations.
{\it Compos. Math.} {\bf 148}\ (2012), no. 1, pp. 121--144.

\bibitem{Li} {\sc Y. Li},
Quiver varieties and symmetric pairs
{\it Represent. Theory} {\bf 23} (2019), 1--56.

\bibitem{Lo} {\sc I. Losev},
Deformations of symplectic singularities and Orbit method for semisimple Lie algebras.
{\it Sel. Math. New Ser.} {\bf 28}, 30, 2022.

\bibitem{LMM}{\sc I. Losev, L. Mason-Brown, D. Matvieievskyi},
Unipotent ideals and Harish-Chandra bimodules.
{\tt 	arXiv:2108.03453v2} (2021).

\bibitem{MacLane}{\sc S. MacLane},
``Categories for the Working Mathematician''.  Graduate Texts in Mathematics,  5.  Springer-Verlag,  New York, 1971.

\bibitem{MMY} {\sc L. Mason-Brown, D. Matvieievskyi, S. Yu}
Unipotent representations of complex groups and extended Sommers duality
{\tt 	arXiv:2309.14853} (2023).

\bibitem{Mo}
{\sc A. Molev}, Yangians and classical Lie algebras. Math. Surveys Monogr., 143
American Mathematical Society, Providence, RI, 2007, xviii+400 pp.

\bibitem{Na1} {\sc Y. Namikawa},
Poisson deformations of affine symplectic varieties.
{\it Duke Math. J.} {\bf 156} \ (2011), no. 1, 51--85.

\bibitem{Na2} \bysame, 
Poisson deformations of affine symplectic varieties, II. 
{\it Kyoto J. Math.} {\bf 50} \ (2010), no. 4, 727--752.

\bibitem{O} {\sc G. Olshanski}, Twisted Yangians and infinite-dimensional classical Lie algebras, in “Quantum Groups (Leningrad, 1990)”, Lecture Notes in Math. 1510, Springer, 1992, pp. 103--120.

\bibitem{PrST}
{\sc A. Premet},
Special transverse slices and their enveloping algebras.
{\it Adv.\ Math.} {\bf 170} \ (2002), 1--55.

\bibitem{PrJI}\bysame,
Enveloping algebras of Slodowy slices and the Joseph ideal.
{\it J. Eur. Math. Soc. (JEMS)} {\bf 9} \ (2007), no. 3, 487--543.

\bibitem{Serre}
{\sc J. P. Serre},
 Lie algebras and Lie groups.
 Lecture Notes in Mathematics 1500 (Second edition),
Springer-Verlag, Berlin, 1992.

\bibitem{Sh}
{\sc I. R. Shafarevich},
Basic algebraic geometry. 1.
Varieties in projective space. Third edition. Translated from the 2007 third Russian edition Springer, Heidelberg, 2013. xviii+310 pp.

\bibitem{Slo}
{\sc P. Slodowy},
``Simple Singularities and Simple Algebraic Groups''.
Lecture Notes in Mathematics 815, Springer-Verlag, Berlin Heidelberg, 1980.

\bibitem{To} {\sc L. Topley}, One dimensional representations of finite W-algebras, Dirac reduction and the orbit method. {\it Invent. math.} (2023). https://doi.org/10.1007/s00222-023-01215-3


\end{thebibliography}
\end{document}